\documentclass{amsart}
\usepackage{fullpage}

\usepackage{amsmath,amssymb,amsthm}
\usepackage[pdftex]{graphicx}
\usepackage{sidecap}
\usepackage{hyperref}
\usepackage{upgreek}
\usepackage{MnSymbol}

%%%%%%%%%%%%%%%%%%%%%%%%%%%%%%%%%%%%%%%%%%%%%%%%%%%%%
%%%%Short Cuts
\newcommand{\pt}{\tau}
\newcommand{\st}{\uptau}
\newcommand{\gl}{\mathfrak{gl}}
\newcommand{\spn}{\mathfrak{sp}}
\newcommand{\df}{\zeta}
\newcommand{\dfe}{\zeta}
\DeclareMathOperator{\tr}{tr}
\DeclareMathOperator{\wt}{wt}
\DeclareMathOperator{\Tr}{tr}
\DeclareMathOperator{\Res}{Res}
\DeclareMathOperator{\zpois}{\mathfrak{z}_{\mathrm{Pois}}}
\DeclareMathOperator{\partition}{\vdash}
\DeclareMathOperator{\gr}{\mathrm{gr}}
%%%%%%%%%%%%%%%%%%%%%%%%%%%%%%%%%%%%%%%%%%%%%%%%%%%%

%\newcommand{\example}{{\bfseries Example: }}
%\newcommand{\remark}{\noindent {\bfseries Remark: }}

\setlength{\headheight}{15.2pt}

%%%%%%%%%%%%%%%%%%%%%%%%%%%%%%%%%%%%%%%%%%%%%%%%%%%
%%%%Intel Formatting
%\renewcommand\rmdefault{ptm}  %%%Times New Roman
%\linespread{1.5}
%\usepackage{setspace}
%%%%%%%%%%%%%%%%%%%%%%%%%%%%%%%%%%%%%%%%%%%%%%%%%%%

\newtheorem{theorem}{Theorem}[section]
\newtheorem{lemma}{Lemma}[section]
\newtheorem{proposition}{Proposition}[section]
\newtheorem{conjecture}{Conjecture}[section]

\newtheorem*{otheorem}{Theorem}

\theoremstyle{definition}
\newtheorem{definition}{Definition}[section]

\theoremstyle{remark}
\newtheorem{remark}{Remark}[section]
\newtheorem{example}{Example}[section]

\setlength{\parskip}{1ex plus 0.5ex minus 0.2ex}

\begin{document}
\title{Representations of infinitesimal Cherednik algebras}
\author[F. Ding]{Fengning Ding}
\address{Phillips Academy, 180 Main St., Andover, MA 01810, USA}
\curraddr{Harvard College, Cambridge, MA 02138, USA}
\email{fding@college.harvard.edu}

\author[A. Tsymbaliuk]{Alexander Tsymbaliuk}
\address{Independent University of Moscow, 11 Bol'shoy Vlas'evskiy per., Moscow 119002, Russia}
\curraddr{Department of Mathematics, MIT, 77 Massachusetts Ave., Cambridge, MA  02139, USA}
\email{sasha\_ts@mit.edu}

%\address{Department of Mathematics,  MIT, 77 Mass. Ave., Cambridge, MA  02139, USA}
%\thanks{
%We would like to thank Pavel Etingof for 
%suggesting this research topic,
%for stimulating discussions,
%for helping us surmount various difficulties,
%and for reviewing the rough draft of this paper.
%}
%\thanks{
%We would also like to thank the PRIMES program at MIT for sponsoring this research.
%}
%\thanks{
%Finally, we would like to thank the referee for valuable suggestions. 
%}

\date{}
\subjclass[2000]{Primary 17; Secondary B10}
\begin{abstract}
Infinitesimal Cherednik algebras are continuous analogues of rational Cherednik algebras, and in the case of $\gl_n$, are deformations of universal enveloping algebras of the Lie algebras $\mathfrak{sl}_{n+1}$.
In the first half of this paper, 
we compute the determinant of the Shapovalov form, enabling us to classify all irreducible finite dimensional representations of $H_\df(\gl_n)$.
In the second half, we
investigate
Poisson-analogues of the infinitesimal Cherednik algebras
and generalize various results to $H_\dfe(\spn_{2n})$, including
Kostant's theorem.
\end{abstract}
\maketitle

\section*{Introduction}

The main goal of this paper is to study the representation theory
of infinitesimal Cherednik algebras $H_\df(\gl_n)$, a deformation
of the representation theory of $\mathfrak{sl}_{n+1}$ with infinitely
many deformation parameters $\df=(\df_0,\df_1,\df_2,...,\df_m,...)$. 
Namely, $\mathfrak{sl}_{n+1}$ can be
represented as $\gl_n\oplus V\oplus V^*$, where $V,V^*$ are the natural
representations of $\gl_n$ on vectors and covectors. 
In this representation of $\mathfrak{sl}_{n+1}$, the
elements of $V$ commute with each other, as do the elements of $V^*$.
The commutation relations of $\gl_n$ with $V,V^*$ are given by
the usual action of matrices on vectors and covectors, while commutators
of $V$ with $V^*$ produce elements of $\gl_n$. 
To pass to the deformation
$H_\df(\gl_n)$, one needs to change only the last relation: commutators of
$V$ and $V^*$ will now be not just elements of $\gl_n$ but rather some
polynomial $\df_0 r_0+\df_1 r_1 +\cdots$ of them, where $\df_i$ are the deformation parameters mentioned above
and $r_i$ are basis polynomials introduced in \cite{EGG}. 
This deformation turns out
to be very interesting, since it unifies the representation theory of
$\mathfrak{sl}_{n+1}$
with that of degenerate
affine Hecke algebras (\cite{D},\cite{L})
and of symplectic reflection algebras (\cite{EG}).

The main results of this paper are the following. 
In Section \ref{section:shapform}, we generalize a classical result from the representation theory of Kac-Moody algebras
by computing the determinant of the contravariant (or Shapovalov) form,
thus determining when the Verma module
over $H_\df(\gl_n)$ is irreducible. 
This proof requires knowledge of the quadratic central element and its action on the Verma module.
In Section \ref{section:gln}, we find
the quadratic central element of $H_\df(\gl_n)$;
this extends the work of Tikaradze \cite{T},
who proved using methods of homological algebra that the
center of $H_\df(\gl_n)$ is a polynomial algebra 
in $n$ generators, but did not get any explicit formulas for these generators. 
In Section \ref{section:finite},
we provide a complete classification and character
formulas for finite dimensional
representations of $H_\df(\gl_n)$, generalizing Chmutova's unpublished work.
In Sections \ref{section:poisson} to \ref{section:sp2n}, we introduce Poisson analogues of the infinitesimal Cherednik algebras, compute their Poisson center,
and use them to give a second proof of the formula for the quadratic central element of $H_\df(\gl_n)$.
We also provide an analogous formula for the center of the Poisson analogue of $H_\df(\spn_{2n})$.
Finally, in Section \ref{section:kostant},
we investigate an analogue of Kostant's theorem for $H_\df(\spn_{2n})$.

$\ $

\section{Basic Definitions}
		Let us formally define the infinitesimal Cherednik algebras of $\gl_n$, which we denote by $H_\df(\gl_n)$.
Let $V=\mathrm{span}(y_1,\ldots,y_n)$ be the basic $n$-dimensional representation of $\gl_n$ 
and $V^{*}=\mathrm{span}(x_1,\ldots,x_n)$ the dual representation.
For any $\gl_n$ invariant pairing $\df:V \times V^{*} \to U(\gl_n)$,
define an algebra $H_\df(\gl_n)$ as the quotient of the semi-direct product algebra 
$U(\gl_n)\ltimes T(V \oplus V^{*})$ 
by the relations $[y,x]=\df(y,x)$ and $[x,x']=[y,y']=0$
for all $x,x' \in V^{*}$ and $y,y' \in V$.
		
		Let us introduce an algebra filtration on $H_\df(\gl_n)$ by setting $\deg(x)=\deg(y)=1$ for $x \in V^*$,
$y \in V$, and $\deg (g)=0$ for $g  \in U(\gl_n)$.
We say that $H_\df(\gl_n)$ satisfies the PBW property if the natural surjective map 
$U(\gl_n)\ltimes S(V\oplus V^*) \twoheadrightarrow \mathrm{gr} H_\df(\gl_n)$ is an isomorphism, 
where $S$ denotes the symmetric algebra;
we call these $H_\df(\gl_n)$ the \emph{infinitesimal Cherednik algebras} of $\gl_n$.
In \cite{EGG}, Theorem 4.2, it was shown that the pairings $\df$
such that $H_\df(\gl_n)$ satisfy the PBW property are given by
$\df=\sum_{j=0}^k \df_j r_j$ where $\df_j \in \mathbb{C}$ and
$r_j$ is the symmetrization of the coefficient of $\tau^j$ in the expansion of
$(x,(1-\tau A)^{-1} y)\det (1-\tau A)^{-1}$.

		Note that for  
$\df=\df_0 r_0 + \df_1 r_1$ with $\df_1 \neq 0$, there is an isomorphism $\phi: H_\df(\gl_n) \rightarrow U(\mathfrak{sl}_{n+1})$ given by
$\phi(\alpha)= \alpha$ for $\alpha \in \mathfrak{sl}_n$,
$\phi(y_i)=\sqrt{\df_1} e_{i,n+1}$,
$\phi(x_i)=\sqrt{\df_1} \,e_{n+1,i}$, and
\[
\phi(\text{Id}) =\frac{1}{n+1} \left(e_{11}+\cdots+e_{nn}-n\,e_{n+1,n+1}-n\frac{\df_0}{\df_1}\right).
\]
This isomorphism allows us to view $H_\df(\gl_n)$ for general $\df$ as an interesting deformation 
of $U(\mathfrak{sl}_{n+1})$,
even though any formal deformation of $U(\mathfrak{sl}_{n+1})$ is trivial.

\begin{example} 
\label{example:gl1}
The infinitesimal Cherednik algebras of $\gl_1$ are generated by elements $e$, $f$, and $h$,
satisfying the relations $[h,e]=e$, $[h,f]=-f$, and $[e,f]=\phi(h)$
for some polynomial $\phi$. In literature, these algebras are known as generalized Weyl algebras (\cite{S}).
\end{example}

Similarly to the representation theory of $\mathfrak{sl}_{n+1}$, we define the Verma module of $H_\df(\gl_n)$ as
\[
	M(\lambda) =H_\df(\gl_n)/\{H_\df(\gl_n) \cdot \mathfrak{n}^++H_\df(\gl_n)(h-\lambda(h))\}_{h \in \mathfrak{h}}
\]
where the set of positive root elements $\mathfrak{n}^+$ is spanned by the positive root elements of $\gl_n$ 
(i.e., matrix units $e_{ij}$ with $i<j$) and elements of $V$; the set of negative root elements
$\mathfrak{n}^-$ is spanned by the negative root elements of $\gl_n$ (i.e., matrix units $e_{ij}$ with $i>j$) and elements of $V^*$; and the Cartan subalgebra $\mathfrak{h}$ is spanned 
by diagonal matrices. The highest weight, $\lambda$, is an element of $\mathfrak{h}^*$, and $v_\lambda$ is the corresponding
highest-weight vector.

	Let us denote the set of positive roots by $\Delta^{+}$, so that $\Delta^{+}= \{e_{ii}^*-e_{jj}^*\} \cup \{e_{kk}^*\}$ for $1\leq i<j\leq n$, $1 \leq k \leq n$.
To denote the positive roots of $\gl_n$, we use $\Delta^{+}\left(\gl_n\right)$, and
to denote the weights of $y_i$, we use $\Delta^{+}(V)$.
We define $\rho = \frac{1}{2} \sum_{\lambda \in \Delta^+(\gl_n)} \lambda=\left(\frac{n-1}{2},\frac{n-3}{2},\ldots,-\frac{n-1}{2}\right)$,
a \emph{quasiroot} to be an integral multiple of an element in
$\Delta^+$, and $Q^+$ to be the set of linear combinations of positive roots with nonnegative integer coefficients.
Finally, $U(\mathfrak{n}^-)_\nu$ denotes the $-\nu$ weight-space of $U(\mathfrak{n}^-)$, where $\nu \in Q^+$.

$\ $

\section{Shapovalov Form}
\label{section:shapform}
	\indent As in the classical representation theory of Lie algebras, the Shapovalov form can be used to investigate the basic structure of Verma modules. Similarly to the classical case, $M(\lambda)$ possesses a maximal proper submodule 
${\overline M}(\lambda)$ and has a unique irreducible quotient $L(\lambda)=M(\lambda)/{\overline M}(\lambda)$. 
Define the Harish-Chandra projection 
$\text{HC}: H_\df(\gl_n) \rightarrow S(\mathfrak{h})$ with respect to the decomposition	$H_\df(\gl_n)=(H_\df(\gl_n)\mathfrak{n}^{+}+\mathfrak{n}^{-} H_\df(\gl_n)) \oplus U(\mathfrak{h})$, and let
$\sigma:H_\df(\gl_n) \rightarrow H_\df(\gl_n)$ be the anti-involution that takes
$y_i$ to $x_i$ and $e_{ij}$ to $e_{ji}$.
	
	\begin{definition}
		The \emph{Shapovalov form} $S: H_\df(\gl_n)\times H_\df(\gl_n) \rightarrow U(\mathfrak{h})\cong S(\mathfrak{h})\cong \mathbb{C}[\mathfrak{h}^*]$
		is a bilinear form given by
		 $S(a,b)=\text{HC}(\sigma(a)b)$. 
		 The bilinear form $S(\lambda)$ on the Verma module $M(\lambda)$ is defined by $S(\lambda)(u_1 v_\lambda, u_2 v_\lambda)=S(u_1, u_2)(\lambda)$, for $u_1, u_2 \in U(\mathfrak{n}^-)$.
	\end{definition}
	This definition is motivated by the following two properties (compare with \cite{KK}):
	\begin{proposition}
		\label{proposition:shap}
		\normalfont{1.} $S(U(\mathfrak{n}^{-})_{\mu},U(\mathfrak{n}^{-})_{\nu})=0$\ \ for $\mu \neq \nu$,
		\\
		\indent\normalfont{2.} ${\overline M}(\lambda)=\ker S(\lambda)$.
	\end{proposition}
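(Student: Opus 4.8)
The plan is to follow the classical template (as in Kac–Kazhdan, \cite{KK}) while paying attention to the two new features: the generators of $\mathfrak{n}^\pm$ now include the ``continuous'' elements $y_i\in V$ and $x_i\in V^*$, and the commutator $[y_i,x_j]$ lands in $U(\mathfrak{h})$ only after applying $\mathrm{HC}$, not on the nose.

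First I would prove part 1. The key point is a weight-grading argument for the anti-involution $\sigma$ and the Harish-Chandra projection. Since $\sigma$ sends $y_i\leftrightarrow x_i$ and $e_{ij}\leftrightarrow e_{ji}$, it sends $U(\mathfrak{n}^-)_\mu$ to the $+\mu$ weight-space of $U(\mathfrak{n}^+)$. For $u_1\in U(\mathfrak{n}^-)_\mu$ and $u_2\in U(\mathfrak{n}^-)_\nu$, the product $\sigma(u_1)u_2$ is a sum of $\mathfrak{h}$-weight vectors of weight $\mu-\nu$ (here I use that the defining relations of $H_\df(\gl_n)$ are $\mathfrak{h}$-homogeneous: $[y,x]=\df(y,x)$ has weight $\mathrm{wt}(y)+\mathrm{wt}(x)$, matching because $\df$ is $\gl_n$-invariant hence $\mathfrak{h}$-weight-preserving, and $\mathrm{wt}(y_i)+\mathrm{wt}(x_i)=0$). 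The Harish-Chandra projection, being the projection onto $U(\mathfrak{h})$ along $H_\df(\gl_n)\mathfrak{n}^+ + \mathfrak{n}^- H_\df(\gl_n)$, kills everything of nonzero $\mathfrak{h}$-weight, so $\mathrm{HC}(\sigma(u_1)u_2)=0$ unless $\mu=\nu$. I should check that the decomposition $H_\df(\gl_n)=(H_\df\mathfrak{n}^+ + \mathfrak{n}^- H_\df)\oplus U(\mathfrak{h})$ is genuinely direct and $\mathfrak{h}$-stable — this follows from the PBW property, which gives a basis of ordered monomials in $\mathfrak{n}^-$, $\mathfrak{h}$, $\mathfrak{n}^+$ elements, the purely-$\mathfrak{h}$ ones spanning the complement.

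Next, part 2. The inclusion $\ker S(\lambda)\supseteq {\overline M}(\lambda)$ is the easy direction: $S(\lambda)$ is contravariant, i.e. $S(\lambda)(av,w)=S(\lambda)(v,\sigma(a)w)$ for $a\in H_\df(\gl_n)$, so its radical is a submodule; it is proper because $S(\lambda)(v_\lambda,v_\lambda)=1\neq 0$; hence the radical is contained in the maximal proper submodule ${\overline M}(\lambda)$. For the reverse inclusion I would argue that $M(\lambda)/\ker S(\lambda)$ carries a nondegenerate contravariant form, and by part 1 this form is block-diagonal with respect to the weight-space decomposition $M(\lambda)=\bigoplus_{\nu\in Q^+} M(\lambda)_{\lambda-\nu}$. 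Any proper submodule $N$ meets some weight space $M(\lambda)_{\lambda-\nu}$ with $\nu\neq 0$ and, being a submodule, sits inside the radical of the form restricted to each weight space (a standard highest-weight argument: if $0\neq w\in N_{\lambda-\nu}$ were not in the radical, pairing it against $U(\mathfrak{n}^-)_\nu$ and using $S(\lambda)(w,u v_\lambda)=S(\lambda)(\sigma(u)w, v_\lambda)$ shows $\sigma(u)w$ is a nonzero multiple of $v_\lambda$ for some $u$, forcing $v_\lambda\in N$, a contradiction). Therefore every proper submodule lies in $\ker S(\lambda)$, so ${\overline M}(\lambda)\subseteq \ker S(\lambda)$, giving equality.

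The main obstacle I anticipate is purely bookkeeping rather than conceptual: making sure the $\mathfrak{h}$-weight grading interacts cleanly with $\mathrm{HC}$ and with the non-Lie relation $[y_i,x_j]=\df(y_i,x_j)$. Concretely, one must verify that the straightening/normal-ordering process (moving $\mathfrak{n}^+$ elements right and $\mathfrak{n}^-$ elements left) preserves $\mathfrak{h}$-weight at every step — this is exactly where $\gl_n$-invariance of $\df$ is used, since it guarantees $\df(y_i,x_j)$ has $\mathfrak{h}$-weight $e_{ii}^*-e_{jj}^* = \mathrm{wt}(y_i)+\mathrm{wt}(x_j)$ — and that the PBW theorem of \cite{EGG} legitimizes the direct-sum decomposition underlying $\mathrm{HC}$. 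Once these are in place, both statements reduce to the same arguments as in the classical Verma module theory.
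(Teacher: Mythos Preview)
Your proposal is correct and follows exactly the classical Kac--Kazhdan argument; the paper itself does not supply a proof of this proposition but simply states it with the parenthetical ``(compare with \cite{KK})'', so your write-up is precisely the verification the paper defers to that reference, with the necessary checks (PBW, $\mathfrak{h}$-homogeneity of $\df$) made explicit.

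One small slip: in part~2 you have the two inclusions labeled backwards. The argument ``the radical is a proper submodule, hence contained in the maximal proper submodule'' proves $\ker S(\lambda)\subseteq\overline{M}(\lambda)$, not $\supseteq$; and your ``reverse inclusion'' argument (any proper submodule lies in $\ker S(\lambda)$) proves $\overline{M}(\lambda)\subseteq\ker S(\lambda)$. The mathematics is sound; just swap the two direction markers.
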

	
	Statement 1 of Proposition \ref{proposition:shap} reduces $S$ to its restriction to $U(\mathfrak{n}^-)_\nu \times U(\mathfrak{n}^-)_\nu$, 
	which we will denote as $S_\nu$.
	Statement 2 of Proposition \ref{proposition:shap} gives a necessary and sufficient condition for the Verma module $M(\lambda)$ to be irreducible, namely that for any $\nu \in Q^+$,
the bilinear form $S_\nu(\lambda)$ is nondegenerate, or equivalently, that $\det S_\nu (\lambda) \neq  0$, where the determinant is computed in any basis; note that this condition is independent of basis. 
For convenience, we choose the basis $\{f^{\mathbf{m}}\}$, where $\mathbf{m}$ runs over all partitions of $\nu$ into a sum of positive roots and $f^{\mathbf{m}}=\prod f_\alpha^{\mathbf{m}_\alpha}$ with $f_\alpha \in \mathfrak{n}^-$ of weight $-\alpha$. 
We will use the notation
$a \partition b$ to mean that $(a_1, \ldots, a_n)$ is a partition of $b$ into a sum of $n$ nonnegative integers when $b \in \mathbb{N}$,
and $\mathbf{m}\partition \nu $ to mean that $\mathbf{m}$ is a partition of $\nu$ into a sum of elements of $\Delta^+$ when $\nu \in Q^+$.
Then, the basis we will work with 
is $\{f^{\mathbf{m}}\}_{\mathbf{m} \partition \nu}$.
	
	Now, we present a formula for the determinant of the Shapovalov form for $H_\df(\gl_n)$
	generalizing the classical result presented in \cite{KK}.
	This formula uses 
	the following result proven
	in Section \ref{subsection:actiongln}:
	for a deformation $\df=\df_0 r_0 + \df_1 r_1 + \cdots + \df_m r_m$,
	the central element $t_1'$ (introduced in Section \ref{section:gln}) acts on the Verma module $M(\lambda)$ by 
	a constant
$	P(\lambda)=\sum_{j=0}^{m+1} w_j H_{j}(\lambda+\rho)$,
where
	$H_j(\lambda)=\sum_{p \partition j} \prod_{1\leq i \leq n} \lambda_i^{p_i}$
	are the complete symmetric functions (we take $H_0(\lambda)=1$)
	and $w_j(\df_0,\ldots,\df_j)$ are linearly independent linear functions on $\df_k$.
	
	Define the Kostant partition function $\tau$ as $\tau(\nu)=\dim U(\mathfrak{n}^-)_{\nu}$.
	Then:
	
	\begin{theorem}
	\label{theorem:shapovalov}
		Up to a nonzero constant factor, the Shapovalov determinant computed in the basis $\{f^{\mathbf{m}}\}_{\mathbf{m} \partition \nu}$ is given by
		\[
			\det S_\nu (\lambda) =\left( \prod_{\alpha \in \Delta^+(V)} \prod_{k=1}^{\infty}
			\left(P(\lambda)-P(\lambda-k\,\alpha) \right)^{\tau(\nu-k\,\alpha)} \right)
			\left(
				\prod_{\alpha \in \Delta^+(\gl_n)} \prod_{k=1}^{\infty}
				\left((\lambda+\rho,\alpha)-k\right)^{\tau(\nu-k\,\alpha)}
			\right).
		\]
	\end{theorem}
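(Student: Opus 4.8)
The plan is to adapt the strategy of Kac--Kazhdan and Shapovalov. Since the Shapovalov form takes values in $S(\mathfrak{h})\cong\mathbb{C}[\mathfrak{h}^*]$, the function $\lambda\mapsto\det S_\nu(\lambda)$ is a polynomial, so it suffices to show: (i) each factor on the right-hand side divides $\det S_\nu(\lambda)$ with at least the stated multiplicity, and (ii) $\deg_\lambda\det S_\nu(\lambda)$ does not exceed the degree of the right-hand side; since the factors are pairwise coprime these force equality up to a nonzero scalar. For (i) I will use the standard mechanism: by Proposition \ref{proposition:shap} one has $\ker S_\nu(\lambda)=\overline{M}(\lambda)_{\lambda-\nu}$, so if for $\lambda$ at a general point of an irreducible hypersurface $X\subset\mathfrak{h}^*$ the module $M(\lambda)$ contains a singular vector of weight $\lambda-\mu$ — which, $M(\lambda-\mu)$ being irreducible for general such $\lambda$, then generates a copy of $M(\lambda-\mu)$ — then $\dim\ker S_\nu(\lambda)\ge\tau(\nu-\mu)$ generically on $X$, and hence, by Smith normal form over the local ring of $X$, $\det S_\nu(\lambda)$ vanishes along $X$ to order $\ge\tau(\nu-\mu)$. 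Thus it is enough to exhibit the relevant singular vectors.

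For the factors indexed by $\alpha\in\Delta^+(\gl_n)$, if $(\lambda+\rho,\alpha)=k\in\mathbb{Z}_{>0}$ then $f_\alpha^k v_\lambda$ is singular, exactly as classically; the only new point is that it is also killed by $\mathfrak{n}^+\cap V$, which holds because for $f_\alpha=e_{pq}$ (with $p>q$) one has $[y_j,f_\alpha]=-\delta_{qj}\,y_p\in V$ and $[y_p,f_\alpha]=0$, so $y_j f_\alpha^k v_\lambda$ is a multiple of $f_\alpha^{k-1}y_p v_\lambda=0$. With $X$ the hyperplane $(\lambda+\rho,\alpha)=k$ and $\mu=k\alpha$, the mechanism above gives divisibility by $((\lambda+\rho,\alpha)-k)^{\tau(\nu-k\alpha)}$, as in Kac--Kazhdan.

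The factors indexed by $\alpha\in\Delta^+(V)$ are the crux, and the key input is the action of the quadratic central element $t_1'$ on Verma modules. Consider first $\alpha=e_{nn}^*$. Among the generators of $\mathfrak{n}^-$ only $x_n$ has a negative $e_{nn}^*$-coordinate, so a weight count gives $U(\mathfrak{n}^-)_{ke_{nn}^*}=\mathbb{C}\,x_n^k$; hence $x_n^k v_\lambda$ spans $M(\lambda)_{\lambda-ke_{nn}^*}$. It is automatically annihilated by $\mathfrak{n}^+\cap\gl_n$ (because $[e_{pq},x_n]=0$ for $p<q\le n$), and by $y_j$ for $j\ne n$ (because $[y_j,x_n]=\df(y_j,x_n)$ has weight $e_{jj}^*-e_{nn}^*$ and so sends $x_n^m v_\lambda$ into the weight space $M(\lambda)_\mu$ with $\lambda-\mu=(m+1)e_{nn}^*-e_{jj}^*\notin Q^+$, which is zero). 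Since $x_n^m v_\lambda$ is a $\gl_n$-highest weight vector spanning its weight space, any weight-zero $c\in U(\gl_n)$ acts on it by $\text{HC}_{\gl_n}(c)(\lambda-me_{nn}^*)$, where $\text{HC}_{\gl_n}$ is the Harish-Chandra projection of $U(\gl_n)$; moving $y_n$ through the $x_n$'s therefore gives
\[
y_n\,x_n^k v_\lambda=\Bigl(\textstyle\sum_{j=0}^{k-1} g(\lambda-j e_{nn}^*)\Bigr)\,x_n^{k-1}v_\lambda,\qquad g(\mu):=\text{HC}_{\gl_n}\bigl(\df(y_n,x_n)\bigr)(\mu),
\]
so $x_n^k v_\lambda$ is singular precisely when $\sum_{j=0}^{k-1}g(\lambda-j e_{nn}^*)=0$. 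The point is that this sum equals $c\,\bigl(P(\lambda)-P(\lambda-k e_{nn}^*)\bigr)$ for a nonzero constant $c$: by telescoping this reduces to the case $k=1$, namely that the finite difference of $P$ in the $e_{nn}^*$-direction is proportional to $g$, which one reads off from the explicit formula for $t_1'$ and its action on $M(\lambda)$ (Sections \ref{section:gln} and \ref{subsection:actiongln}); conversely, that a singular vector of weight $\lambda-\mu$ forces $P(\lambda)=P(\lambda-\mu)$ is automatic from the centrality of $t_1'$. Hence $x_n^k v_\lambda$ is singular exactly on $\{P(\lambda)=P(\lambda-ke_{nn}^*)\}$, and the mechanism of the first paragraph yields divisibility by $(P(\lambda)-P(\lambda-ke_{nn}^*))^{\tau(\nu-ke_{nn}^*)}$. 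For a general $e_{ii}^*\in\Delta^+(V)$ the weight space $M(\lambda)_{\lambda-ke_{ii}^*}$ is no longer one-dimensional, so one must instead construct a ``Shapovalov-type'' singular vector with leading term $x_i^k v_\lambda$, the relevant eigenvalue identity again coming from $t_1'$; this, together with the multiplicity bookkeeping, I expect to be the technically heaviest part of the proof.

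It remains to bound the degree. Each entry $S(f^{\mathbf{m}},f^{\mathbf{m}'})(\lambda)=\text{HC}(\sigma(f^{\mathbf{m}})f^{\mathbf{m}'})(\lambda)$ is obtained by normal ordering $\sigma(f^{\mathbf{m}})f^{\mathbf{m}'}$, and each time a $y$ is moved past an $x$ the defining relation contributes an element of $U(\gl_n)$ of degree $\le m$, whose Harish-Chandra part contributes degree $\le m$ in $\lambda$. Passing to the leading term, the $\gl_n$-directions in $\mathbf{m}$ contribute as in the classical case and each $V$-direction contributes a factor of $m$, giving
\[
\deg_\lambda\det S_\nu(\lambda)\;\le\;\sum_{\alpha\in\Delta^+(\gl_n)}\sum_{k\ge 1}\tau(\nu-k\alpha)\;+\;m\sum_{\alpha\in\Delta^+(V)}\sum_{k\ge 1}\tau(\nu-k\alpha).
\]
Since $\deg P=m+1$, each $P(\lambda)-P(\lambda-k\alpha)$ has degree $m$, so the right-hand side of the theorem has exactly this degree, and combining with the divisibility proved above finishes the proof. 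The principal obstacles are the $\Delta^+(V)$-divisibility — especially the telescoping identity relating $P$ to the Harish-Chandra image of the pairing $\df$, which is where the explicit knowledge of $t_1'$ is indispensable — and carrying out the leading-term degree estimate precisely enough to see the factor $m$ attached to each $\Delta^+(V)$-direction.
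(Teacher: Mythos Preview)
Your route and the paper's diverge at the $\Delta^+(V)$-factors. The paper does \emph{not} construct singular vectors for these. It first shows (Lemma~\ref{lemma:shap1}) that every irreducible factor of $\det S_\nu$ must divide some $P(\lambda)-P(\lambda-\mu)$, since a zero of the determinant yields a singular vector and centrality of $t_1'$ then forces $P(\lambda)=P(\lambda-\mu)$. The decisive step is an \emph{irreducibility lemma} (Lemma~\ref{lemma:irreducible}): treating the parameters $w_i$ as independent variables, $P(\lambda)-P(\lambda-\mu)$ is irreducible whenever $\mu$ is not a $\gl_n$-quasiroot, and for $\mu=k\alpha$ with $\alpha\in\Delta^+(\gl_n)$ the quotient $\bigl(P(\lambda)-P(\lambda-k\alpha)\bigr)/\bigl((\lambda+\rho,\alpha)-k\bigr)$ is irreducible. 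Matching the explicitly computed top term of $\det S_\nu$ (coming from the diagonal entries $\text{HC}([y_j,x_j])$ and $(\lambda,\alpha)$) against the top terms of these irreducible candidates then identifies exactly which factors can occur; the exponents are pinned down by the Jantzen filtration as in Kac--Kazhdan. This buys a uniform treatment of all $e_{ii}^*$ at once, with no special case $i<n$.

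Your proposal has two genuine gaps, and the irreducibility lemma is precisely what closes both. First, you defer the construction of a singular vector for $\alpha=e_{ii}^*$ with $i<n$; this is not a routine extension of your (correct) $e_{nn}^*$ argument, and the paper simply sidesteps it. Second, you assert that the factors on the right-hand side are pairwise coprime, but that is the content of Lemma~\ref{lemma:irreducible} (proved by a hands-on computation with $H_2$ and $H_3$), and without it your divisibility-plus-degree-count cannot conclude equality. Your Smith-normal-form mechanism likewise needs the hypersurface $\{P(\lambda)=P(\lambda-k\alpha)\}$ to be irreducible (or at least reduced) to extract the full exponent $\tau(\nu-k\alpha)$. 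So the irreducibility lemma is unavoidable in either approach; once one has it, the paper's top-term matching is quicker than building Shapovalov elements for each $e_{ii}^*$. Your degree bound and the $e_{nn}^*$ analysis are correct, and the telescoping identification of $P(\lambda)-P(\lambda-e_{nn}^*)$ with $\text{HC}_{\gl_n}\bigl(\df(y_n,x_n)\bigr)(\lambda)$ is a nice observation the paper does not make explicit.
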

	\begin{remark} 
	In the case $\df=\df_0 r_0 + \df_1 r_1$ with $\df_1 \neq 0$, we get the classical formula from \cite{KK}.
	\end{remark}
	    
	\begin{proof}
		The proof of this theorem is quite similar to the classical case with a few technical details and differences 
		that will be explained below.
		We begin with the following lemma, which shows that irreducible factors of $\det S_\nu(\lambda)$ must divide
		$P(\lambda)-P(\lambda-\mu)$ for some $\mu \in Q^+$.
		\begin{lemma}
		\label{lemma:shap1}
			Suppose $\det S_\nu(\lambda)=0$. 
			Then, there exists $\mu\in Q^+\backslash \{0\}$ such that $P(\lambda)-P(\lambda-\mu)=0$.
		\end{lemma}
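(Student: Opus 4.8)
The plan is to mimic the classical argument from \cite{KK} for Kac–Moody algebras: exhibit a nonzero singular vector in $M(\lambda)$ of some weight $\lambda-\mu$ with $\mu\in Q^+\setminus\{0\}$, and then show that the existence of such a vector forces $P(\lambda)=P(\lambda-\mu)$ because $t_1'$ is central. First I would observe that $\det S_\nu(\lambda)=0$ means $S_\nu(\lambda)$ is degenerate, so $\ker S(\lambda)\cap M(\lambda)_{\lambda-\nu}\neq 0$; by Proposition \ref{proposition:shap}(2) this kernel equals $\overline{M}(\lambda)$, so $\overline{M}(\lambda)$ meets the weight space $M(\lambda)_{\lambda-\nu}$ nontrivially. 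Since $\overline{M}(\lambda)$ is a proper submodule, it is contained in the direct sum of weight spaces $M(\lambda)_{\lambda-\mu}$ with $\mu\in Q^+\setminus\{0\}$, and being a nonzero module it contains a nonzero highest-weight vector, i.e.\ a nonzero $w\in M(\lambda)_{\lambda-\mu}$ annihilated by $\mathfrak{n}^+$, for some such $\mu$ (take $\mu$ minimal in the support, or just any weight of a generator of an irreducible sub). Call this $w$ the singular vector.

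The second step is to apply the central element $t_1'$ to $w$ in two ways. On one hand, $t_1'$ acts on all of $M(\lambda)$ by the scalar $P(\lambda)$, since $M(\lambda)$ is generated by $v_\lambda$ and $t_1'$ is central, so $t_1' w = P(\lambda)\, w$. On the other hand, $w$ itself generates a highest-weight submodule $H_\df(\gl_n)\cdot w$, which is a quotient of the Verma module $M(\lambda-\mu)$ (because $w$ is killed by $\mathfrak{n}^+$ and is an eigenvector for $\mathfrak{h}$ with weight $\lambda-\mu$); therefore $t_1'$ acts on $H_\df(\gl_n)\cdot w$, and in particular on $w$, by the scalar $P(\lambda-\mu)$. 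Comparing, $P(\lambda)\,w=P(\lambda-\mu)\,w$, and since $w\neq 0$ we conclude $P(\lambda)-P(\lambda-\mu)=0$ with $\mu\in Q^+\setminus\{0\}$, as desired.

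The only genuinely non-classical ingredient here is the use of the specific central element $t_1'$ and its eigenvalue $P(\lambda)$ on $M(\lambda)$; both the centrality and the eigenvalue formula are supplied by the results quoted before the theorem (from Sections \ref{section:gln} and \ref{subsection:actiongln}), so I may assume them. The main obstacle, such as it is, is purely bookkeeping: one must check that $\overline{M}(\lambda)$ really does contain a nonzero $\mathfrak{n}^+$-invariant vector. This follows from the standard fact that any nonzero submodule of a Verma module for an algebra with a triangular-type decomposition and finite-dimensional weight spaces has a lowest weight in the dominance order on its support (the supports being bounded above by $\lambda$ and contained in $\lambda-Q^+$), and at that minimal weight the vector is necessarily killed by $\mathfrak{n}^+$; I would state this as a brief sub-observation rather than belabor it, since finite-dimensionality of the weight spaces $M(\lambda)_{\lambda-\mu}$ is immediate from the PBW property for $H_\df(\gl_n)$.
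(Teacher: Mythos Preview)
Your proposal is correct and follows essentially the same route as the paper's proof: produce a singular (critical) vector of weight $\lambda-\mu$ in $M(\lambda)$ from the degeneracy of $S_\nu(\lambda)$, then compare the two scalars by which the central element $t_1'$ acts. Your write-up is in fact slightly more careful than the paper's (you argue via a quotient of $M(\lambda-\mu)$ rather than asserting an embedding, and you spell out why a singular vector exists), but the substance is the same.
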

		\begin{proof}
			Note that $\det S_{\nu}(\lambda)=0$ 
			implies that the Verma module $M(\lambda)$ has a critical vector (a vector on which all elements of $\mathfrak{n}^+$ act by 0)
			of weight $\lambda-\mu$ for some $\mu \in Q^+$ satisfying $0 < \mu < \nu$. 
			Thus, $M(\lambda-\mu)$ is embedded in $M(\lambda)$.
			Since $t_1'$ acts by constants on both $M(\lambda)$ and $M(\lambda-\mu)$, which can be considered as a submodule of $M(\lambda)$, we get
			$P(\lambda)=P(\lambda-\mu)$.
		\end{proof}
		
		The top term of the Shapovalov determinant $\det S_\nu (\lambda)$ in the basis $\{f^{\mathbf{m}}\}_{\mathbf{m} \partition \nu}$
		comes from the product of diagonal elements, that is,
		%\text{HC}(\sigma(f^{\mathbf{m}})f^{\mathbf{m}})(\lambda)=
		$\prod_{\mathbf{m} \partition \nu} \prod [\sigma(f_{\alpha}),f_{\alpha}]^{\mathbf{m}_{\alpha}}(\lambda)$. 
		The top term of $[e_{ij},e_{ji}](\lambda)$ for $i<j$ is
		$\lambda_i-\lambda_j=(\lambda,\alpha)$ where $\alpha$ is the weight of $e_{ij}$. 
		The following lemma gives the top term of $[y_j,x_j](\lambda)$:
		\begin{lemma}
			The highest term of $[y_j,x_j](\lambda)$ for $\df=\df_0 r_0+\cdots+\df_m r_m$ is
			$\df_m \sum_{\mathbf{p}} (\mathbf{p}_j+1)\prod \lambda_i^{\mathbf{p}_i}$,
			where the sum is over all partitions $\mathbf{p}$ of $m$ into $n$ summands.
		\end{lemma}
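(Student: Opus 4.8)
The plan is to isolate the top-degree part of $[y_j,x_j](\lambda)=\mathrm{HC}\big(\df(y_j,x_j)\big)(\lambda)$ in the filtration where $\deg(e_{ab})=1$, which is the leading homogeneous part as a polynomial in $\lambda_1,\dots,\lambda_n$. Since $r_i$ is homogeneous of degree $i$ in the generators $e_{ab}$ of $U(\gl_n)$ and the Harish-Chandra projection is filtration non-increasing (rewriting a degree-$\le d$ element in a PBW basis involves only degree-$\le d$ monomials, of which $\mathrm{HC}$ keeps a subset), the only summand of $\df(y_j,x_j)=\sum_{i=0}^m\df_i\,r_i(y_j,x_j)$ that can contribute in degree $m$ is $\df_m\,r_m(y_j,x_j)$; thus it suffices to compute the degree-$m$ component of $\mathrm{HC}\big(r_m(y_j,x_j)\big)(\lambda)$.

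To do this I would first unwind the definition of $r_m$. Let $A=(e_{ab})$ be the matrix of the defining $\gl_n$-representation as in the formula for the $r_j$; working in the commutative algebra $S(\gl_n)$ we have $(x,(1-\tau A)^{-1}y)=\sum_{k\ge0}\tau^k(x,A^ky)$ and $\det(1-\tau A)^{-1}=\sum_{l\ge0}\tau^l H_l(A)$, where $H_l(A)\in S(\gl_n)$ is homogeneous of degree $l$ (the complete symmetric polynomial in the eigenvalues of $A$). Substituting $x=x_j,\,y=y_j$ gives $(x_j,A^ky_j)=(A^k)_{jj}$, so $r_m(y_j,x_j)$ is the image under the symmetrization map $\mathrm{sym}\colon S(\gl_n)\to U(\gl_n)$ of $f:=\sum_{k+l=m}(A^k)_{jj}\,H_l(A)$.

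The main point is then the following general fact, which I would prove and apply with $g=f$: for homogeneous $g\in S(\gl_n)$ of degree $d$, the degree-$d$ component of $\mathrm{HC}(\mathrm{sym}(g))$ is $\pi(g)$, where $\pi\colon S(\gl_n)\to S(\mathfrak{h})$ is the algebra map sending $e_{ab}\mapsto0$ for $a\ne b$ and fixing the $e_{ii}$. Indeed the leading symbol of $\mathrm{sym}(g)$ in $\mathrm{gr}\,U(\gl_n)=S(\gl_n)$ is $g$, so in a PBW basis ordered $\mathfrak{n}^-\cdot\mathfrak{h}\cdot\mathfrak{n}^+$ the degree-$d$ part of $\mathrm{sym}(g)$ is $g$ itself, and $\mathrm{HC}$ (which restricted to $U(\gl_n)$ is the classical Harish-Chandra projection) retains exactly the monomials lying in $S(\mathfrak{h})$, all reordering corrections having degree $<d$. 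Under $\pi$ the matrix $A$ becomes $\mathrm{diag}(e_{11},\dots,e_{nn})$, so $(A^k)_{jj}\mapsto e_{jj}^k$ and $H_l(A)\mapsto H_l(e_{11},\dots,e_{nn})$; evaluating at $\lambda$ gives that the highest term of $[y_j,x_j](\lambda)$ equals $\df_m\sum_{k+l=m}\lambda_j^k H_l(\lambda)$, where $H_l(\lambda)=\sum_{\mathbf{q}\partition l}\prod_i\lambda_i^{\mathbf{q}_i}$.

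It remains to match $\sum_{k+l=m}\lambda_j^k H_l(\lambda)$ with the asserted expression. For a fixed $\mathbf{p}\partition m$, the monomial $\prod_i\lambda_i^{\mathbf{p}_i}$ arises in $\lambda_j^k H_l(\lambda)$ exactly when $\mathbf{q}_i=\mathbf{p}_i$ for $i\ne j$ and $\mathbf{q}_j=\mathbf{p}_j-k\ge0$ (then $l=m-k$ is forced), i.e.\ for $k=0,1,\dots,\mathbf{p}_j$, contributing $1$ each; hence the coefficient of $\prod_i\lambda_i^{\mathbf{p}_i}$ is $\mathbf{p}_j+1$ and the highest term is $\df_m\sum_{\mathbf{p}\partition m}(\mathbf{p}_j+1)\prod_i\lambda_i^{\mathbf{p}_i}$, as claimed. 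I expect the genuinely non-formal step to be the general fact about $\mathrm{HC}\circ\mathrm{sym}$ in top degree — pinning down that at the top of the filtration the Harish-Chandra projection collapses to the naive diagonal restriction $\pi$; the rest is a direct substitution followed by a partition count.
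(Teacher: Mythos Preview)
Your proof is correct and follows essentially the same route as the paper's. Both arguments reduce to evaluating the coefficient of $\tau^m$ in $\det(1-\tau A)^{-1}\,(x_j,(1-\tau A)^{-1}y_j)$ at a diagonal matrix and then identifying $\sum_{k+l=m}\lambda_j^k H_l(\lambda)$ with $\sum_{\mathbf{p}\partition m}(\mathbf{p}_j+1)\prod_i\lambda_i^{\mathbf{p}_i}$; the paper justifies the diagonal restriction by the density of diagonalizable matrices, whereas you make the same step explicit via the statement that the top-degree part of $\mathrm{HC}\circ\mathrm{sym}$ is the projection $\pi$ onto $S(\mathfrak{h})$.
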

		\begin{proof}
			From \cite{EGG}, Theorem 4.2, we know that the top term of
			$[y_j,x_j]$ for $\df=\df_0 r_0 +\df_1 r_1+\cdots+r_m$ is given by the coefficient of $\tau^m$ in $\det(1-\tau A)^{-1}(x_j, (1-\tau A)^{-1} y_j)$.
			Because the set of diagonalizable matrices is dense in $\gl_n$, 
			we can assume $A$ is a diagonal matrix $A=\text{diag}(\lambda_1,\lambda_2,...,\lambda_n)$
			so that \\*$\det(1-\tau A)^{-1}=\prod{\frac{1}{1-\tau \lambda_i}}=\sum_k \sum_{p \partition k} \prod_i \lambda_i^{\mathbf{p}_i} \tau^k$ 
			and
				$x_j (1-\tau A)^{-1} y_j=\frac{1}{1-\tau \lambda_j}=1+\lambda_j \tau+\cdots$.
			Multiplying these series gives the statement in the lemma.
		\end{proof}	
		Thus, we see that the top term of the determinant computed in the basis $\{f^{\mathbf{m}}\}_{\mathbf{m} \partition \nu}$, up to a scalar multiple, is of the form
		\[
			\left(\prod_{\alpha \in \Delta^+(\gl_n)} (\lambda,\alpha)^{\sum_{\mathbf{m}}\mathbf{m}_{\alpha}} 
			\right)
			\left(
			\prod_{\alpha=\wt(y_j) \in \Delta^+(V)}
			\left(\sum_{\mathbf{p}} (\mathbf{p}_j+1)\prod \lambda_i^{\mathbf{p}_i}\right)^{\sum_{\mathbf{m}} \mathbf{m}_\alpha}
			\right).
		\]
		Since $\tau(\mu)$ is the number of partitions of a weight $\mu$,
		the sum $\sum_\mathbf{m} \mathbf{m}_{\alpha}$ over all partitions $\mathbf{m}$ of $\nu$ with $\alpha$ fixed must equal 
		$\sum_{k=1}^{\infty} \tau(\nu-k\alpha)$,
		so the expression above simplifies to
		\begin{eqnarray*}
			\left(\prod_{\alpha \in \Delta^+(\gl_n)} \prod_{k=1}^{\infty} (\lambda,\alpha)^{\tau(\nu-k\alpha)} 
			\right)
			\left(
			\prod_{\alpha=\wt(y_j) \in \Delta^+(V)} \prod_{k=1}^{\infty} 
			\left(\sum_{\mathbf{p} \partition m} (\mathbf{p}_j+1)\prod \lambda_i^{\mathbf{p}_i}\right)^{\tau(\nu-k\alpha)}
			\right).
		\end{eqnarray*}
		
		This highest term comes from the product of the highest terms of factors of $P(\lambda)-P(\lambda-\mu)$
		for various $\mu \in Q^+$. 
		
		\begin{lemma}
		\label{lemma:irreducible}
		\normalfont{1.} For all $\mu \neq k\alpha$, $\alpha \in \Delta^+(\gl_n)$,
		$P(\lambda)-P(\lambda-\mu)$ is irreducible as a polynomial in $\lambda$.\\
		\indent\normalfont{2.} For $\mu=k \alpha$, $\alpha \in \Delta^+(\gl_n)$,
		$\frac{P(\lambda)-P(\lambda-k\alpha)}{(\lambda+\rho,\alpha)-k}$ is irreducible.
		\end{lemma}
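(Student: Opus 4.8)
The plan is to work entirely with symmetric polynomials. Writing $\Lambda:=\lambda+\rho$ and $Q(\Lambda):=\sum_{j=0}^{m+1}w_jH_j(\Lambda)$, one has $P(\lambda)-P(\lambda-\mu)=Q(\Lambda)-Q(\Lambda-\mu)=:g_\mu(\Lambda)$, a polynomial in $\Lambda_1,\dots,\Lambda_n$ not involving $w_0$. Since the degree-$(m+1)$ homogeneous components of $H_j(\Lambda)$ and $H_j(\Lambda-\mu)$ agree for every $j$, a short computation gives two facts: $g_\mu$ has total degree exactly $m$, with leading form $w_{m+1}\sum_i\mu_i\,\partial_iH_{m+1}(\Lambda)$ (the directional derivative of $H_{m+1}$ along $\mu$); and, for each index $i$, the coefficient of $\Lambda_i^m$ in $g_\mu$ equals the \emph{constant} $w_{m+1}\bigl(m\mu_i+\sum_\ell\mu_\ell\bigr)$ while no higher power of $\Lambda_i$ appears. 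Because $\sum_i\bigl(m\mu_i+\sum_\ell\mu_\ell\bigr)=(m+n)\sum_\ell\mu_\ell$, this constant is nonzero for at least one $i$ (and for every $i$ in the support of $\mu$ when $\sum_\ell\mu_\ell=0$); fix such an $i$ and rename that variable $\Lambda_1$, so that $\deg_{\Lambda_1}g_\mu=m$ and $g_\mu$ is primitive as an element of $\mathbb{C}[\Lambda_2,\dots,\Lambda_n][\Lambda_1]$.

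For part 2 I would first peel off the linear factor. If $\mu=k\alpha$ with $\alpha=e_{ii}^*-e_{jj}^*$, the translation $\Lambda\mapsto\Lambda-k\alpha$ fixes $\Lambda_\ell$ for $\ell\neq i,j$, fixes $\Lambda_i+\Lambda_j$, and sends $\Lambda_i\Lambda_j$ to $\Lambda_i\Lambda_j+k\bigl((\lambda+\rho,\alpha)-k\bigr)$. As each $H_j$ is a polynomial in $\Lambda_i+\Lambda_j$, $\Lambda_i\Lambda_j$ and the remaining $\Lambda_\ell$, the factor $(\lambda+\rho,\alpha)-k$ divides $H_j(\Lambda)-H_j(\Lambda-k\alpha)$ for all $j$, hence divides $g_{k\alpha}$ -- equivalently, on the hyperplane $(\lambda+\rho,\alpha)=k$ the tuple $\Lambda-k\alpha$ is $\Lambda$ with its $i$-th and $j$-th entries interchanged, so $H_j(\Lambda-k\alpha)=H_j(\Lambda)$ there. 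Choosing $\Lambda_1:=\Lambda_i$ (legitimate by the previous paragraph, as $\mu_i=k\neq0$), the quotient $\bar g_{k\alpha}:=g_{k\alpha}/\bigl((\lambda+\rho,\alpha)-k\bigr)$ has total degree $m-1$ and leading $\Lambda_1$-coefficient $w_{m+1}mk$, a nonzero constant, so it too is primitive in $\Lambda_1$.

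For the irreducibility proper -- of $g_\mu$ in part 1, of $\bar g_{k\alpha}$ in part 2 -- I would invoke Gauss's lemma. After scaling the factors by constants, any nontrivial factorization consists of two factors primitive in $\Lambda_1$ with constant leading $\Lambda_1$-coefficients, hence descends to a nontrivial factorization over the field $K=\mathbb{C}(\Lambda_2,\dots,\Lambda_n)$; thus it suffices to prove $g_\mu$ (resp.\ $\bar g_{k\alpha}$) is irreducible over $K$, and for that one must use the actual combinatorics of $Q=\sum_jw_jH_j$. The cleanest case is $n$ large relative to $m$: then the leading form $w_{m+1}\sum_i\mu_i\,\partial_iH_{m+1}$ is itself an irreducible form in $\Lambda_1,\dots,\Lambda_n$ (for $m=2$, say, its Gram matrix has full rank for all $n\geq3$), whence $g_\mu$ is irreducible outright. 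In general I would attack the $K$-irreducibility through the generating function $\sum_jH_j(\Lambda)t^j=\prod_i(1-t\Lambda_i)^{-1}$ and the resulting Lagrange-type formula $Q(\Lambda)=\sum_{i}\Lambda_i^{n-1}W(\Lambda_i)\big/\prod_{\ell\neq i}(\Lambda_i-\Lambda_\ell)$ with $W(t)=\sum_jw_jt^j$ of degree $m+1$, which makes the $\Lambda_1$-degree and leading $\Lambda_1$-behaviour transparent enough to obstruct a nontrivial split.

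I expect the main obstacle to be precisely this last step: a degree-$m$ polynomial in one variable is not irreducible for free, and -- as one sees already for $n=2$, $m=2$ -- for a hypersurface of parameters $\df_j$ the polynomial $g_\mu$ does factor, so the argument cannot be purely formal and must exploit the structure of the complete homogeneous symmetric functions. This does not affect Theorem~\ref{theorem:shapovalov} itself: both sides of the asserted determinant identity are polynomial in $\df$, so it is enough to establish it for generic $\df$, for which the factorizations of $P(\lambda)-P(\lambda-\mu)$ are exactly the ones in the Lemma.
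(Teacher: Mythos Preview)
Your proposal has a real gap at exactly the point you flag: you never establish the $K$-irreducibility, and as you yourself observe, for fixed numerical parameters $\df$ (equivalently, fixed $w_j\in\mathbb{C}$) the polynomial $g_\mu$ can and does factor. So the Lemma, read as a statement about a single fixed deformation, is simply false, and no amount of Gauss's-lemma bookkeeping will rescue it.

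The paper's proof sidesteps this entirely by a device you came close to but did not use: it treats the $w_j$ as \emph{indeterminates} and proves irreducibility of $g_\mu$ in $\mathbb{C}[w_1,w_2,\dots,\lambda_1,\dots,\lambda_n]$. This is what ``generic $\df$'' really means here, and it makes the argument elementary. Since $g_\mu=\sum_{j\ge1}w_j\bigl(H_j(\Lambda)-H_j(\Lambda-\mu)\bigr)$ is homogeneous of degree $1$ in the $w$'s, any factorization $g_\mu=AB$ forces one factor, say $A$, to lie in $\mathbb{C}[\Lambda]$; then $A$ divides every coefficient $H_j(\Lambda)-H_j(\Lambda-\mu)$. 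For $j=1$ this coefficient is the constant $\sum_i\mu_i$, so if $\sum_i\mu_i\ne0$ we are done. Otherwise the $j=2$ coefficient is linear with top term $(\lambda,\mu)$, so $A$ is (up to scalar) that linear form, and the paper finishes by checking that $(\lambda,\mu)$ divides the quadratic top term $H_3'(\lambda)(\mu)$ of the $j=3$ coefficient only when $\mu=k\alpha$ for some $\alpha\in\Delta^+(\gl_n)$. Part~2 is handled ``by similar arguments'' after peeling off the one legitimate linear factor.

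Your divisibility argument for part~2 (symmetry of the $H_j$ under the transposition forced on the hyperplane $\Lambda_i-\Lambda_j=k$) is correct and arguably cleaner than what the paper writes there. And your closing remark---that Theorem~\ref{theorem:shapovalov} only needs the Lemma for generic $\df$---is exactly right; but once you say ``generic $\df$'', the natural move is to promote the $w_j$ to variables, which collapses your unfinished $K$-irreducibility problem into the three-line argument above.
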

		
		If Lemma \ref{lemma:irreducible} is true, then all $\mu$ contributing to the above product must be quasiroots:
		if $\mu \neq k\alpha$ for some $\alpha\in \Delta^+(\gl_n)$,
		the highest term of the irreducible polynomial
		$P(\lambda)-P(\lambda-\mu)$,
		$
			\sum_{p \partition m} \sum_{j} \mu_j (\mathbf{p}_j+1) \prod \lambda_i^{\mathbf{p}_i}
		$,
		does not match any factor in the highest term of the Shapovalov determinant unless
		$\mu$ is a $V$-quasiroot. 
		Moreover, if $\mu=k\alpha$ for $\alpha\in \Delta^+(\gl_n)$,
		 since 
		$\frac{P(\lambda)-P(\lambda-k\alpha)}{(\lambda+\rho,\alpha)-k}$ is irreducible
		for $\alpha \in \Delta^+(\gl_n)$, 
		comparison with the highest term of the determinant shows that only the linear factor $(\lambda+\rho,\alpha)-k$ of $P(\lambda)-P(\lambda-k\alpha)$ appears in the Shapovalov determinant.

		\begin{proof}
			We will prove that $P(\lambda)-P(\lambda-\mu)$ is irreducible for $\mu \neq k\alpha$ ($\alpha \in \Delta^+(\gl_n)$); similar arguments will show that $\frac{P(\lambda)-P(\lambda-k\alpha)}{(\lambda+\rho,\alpha)-k}$ is irreducible for any
		$\alpha \in \Delta^+(\gl_n)$, $k\in \mathbb{N}$.
			
		Consider the parameters $w_i$ as formal variables.
		Then, we have $P(\lambda)-P(\lambda-\mu)=\sum_{i \geq 0} w_i (H_{i}(\lambda+\rho)-H_{i}(\lambda+\rho-\mu))$.
		We can absorb the $\rho$ vector into the $\lambda$ vector. 
		For this polynomial to be reducible in $w_i$ and $\lambda_j$,
		the coefficient of $w_1$ should be zero:
		$H_1(\lambda)-H_1(\lambda-\mu)=H_1(\mu)=0$.
		Also, since the coefficient of $w_2$ is linear in $\lambda_j$, 
		it must divide the coefficients of every other $w_i$.
		In particular, the highest term of $H_2(\lambda)-H_2(\lambda-\mu)$ must divide that of $H_3(\lambda)-H_3(\lambda-\mu)$.
		The highest term of $H_2(\lambda)-H_2(\lambda-\mu)$ is $\sum_i \lambda_i (\mu_i+\sum_j \mu_j)=(\lambda,\mu)$
		and the highest term of $H_3(\lambda)-H_3(\lambda-\mu)$ is given by $H_3'(\lambda)(\mu)$,
		the evaluation of the gradient $H_3'(\lambda)$ at $\mu$. 
		Since this term is quadratic and is divisible by $(\lambda,\mu)$, we can write
		$H_3'(\lambda)(\mu)=(\lambda,\mu)(\lambda,\xi)$ for some $\xi \in \mathfrak{h}^*$.
		Now, let us match coefficients of $\lambda_i \lambda_j$ for $i\neq j$ and of $\lambda_i^2$ on both sides of the equation.
		By doing so (and using the fact that $\sum \mu_i =0$), we obtain $\mu_i \xi_j+\mu_j \xi_i=\mu_i+\mu_j$
		and $\mu_i \xi_i=2\mu_i$. Since $\mu_1+\cdots+\mu_n=0$ and $\mu \neq 0$,
		at least two of $\mu_i$ are nonzero, say $\mu_{i_1}$ and 
		$\mu_{i_2}$. From the two equations, we obtain $\mu_{i_1}+\mu_{i_2}=0$.
		If $\mu_{i_3}\neq 0$, then by similar arguments,
		$\mu_{i_1}+\mu_{i_3}=\mu_{i_2}+\mu_{i_3}=\mu_{i_1}+\mu_{i_2}=0$,
		which is impossible since $\mu_{i_1},\mu_{i_2},\mu_{i_3}\neq 0$.
		Thus, $P(\lambda)-P(\lambda-\mu)$ is reducible
		only if exactly two of the $\mu_i$ are nonzero and opposite to each other; that is, $\mu=k\alpha$ for 
		$\alpha \in \Delta^+(\gl_n)$.
		\end{proof}
				
		To prove that the power of each factor in the determinant formula of Theorem \ref{theorem:shapovalov} is correct, 
		we use an argument involving the Jantzen filtration, which we
		define as in \cite{KK}, page 101 (for our purposes, we switch $U(\mathfrak{g})$ to $H_\df(\gl_n)$). 
		The Jantzen filtration is a technique to track the order of zero of a bilinear form's determinant.
		Instead of working over the complex numbers, we consider the ring of localized polynomials $\mathbb{C} \langle t \rangle=\{\frac{p(t)}{q(t)} \ |\  p(t), q(t) \in \mathbb{C}[t], q(0) \neq 0\}$.
		A word-to-word generalization of \cite{KK}, Lemma 3.3, proves
		that the power of $P(\lambda)-P(\lambda-k\alpha)$ for $\alpha \in \Delta^+(V)$ and of $(\lambda+\rho,\alpha)-k$ for $\alpha \in \Delta^+(\gl_n)$ is given by $\tau(\nu-k\alpha)$, completing the proof of Theorem \ref{theorem:shapovalov}.
	\end{proof} 

%%%%%%%%%%%%%%%%%%%%%%%%%%%%%%%%%%%%%%%%%%%%%%%%%%%%%%%%%%%%%%%%%%%%%%%%%%%%%%%%%%%%
%%%%%%%%%%%%%%%%%%%%%%%%%%%%%%%%%%%%%%%%%%%%%%%%%%%%%%%%%%%%%%%%%%%%%%%%%%%%%%%%%%%%

$\ $

\section{The Casimir Element of $H_\df(\gl_n)$}
\label{section:gln}
	Let $\mathcal{Q}_1, \mathcal{Q}_2, \mathcal{Q}_3,...,\mathcal{Q}_n \in S(\gl_n^*)$ (which can be identified as elements of $S(\gl_n)$ under the trace-map) be defined
	by the power series
	$\det(t \text{Id} -X)=\sum_{j=0}^{n}{(-1)^j t^{n-j} \mathcal{Q}_j(X)}$,
	and let $\beta_i$ be the image of $\mathcal{Q}_i$ under the symmetrization map from $S(\gl_n)$ to $U(\gl_n)$.
	The center of $U(\gl_n)$ is a polynomial algebra generated by these $\beta_i$.
	Define $t_i=\sum_j x_j[\beta_i,y_j]$. 
	According to \cite{T}, Theorems 2.1 and 1.1,
	the center of $H_0(\gl_n)$
	is a polynomial algebra in $\{t_i\}_{1 \leq i \leq n}$,
	and there exist unique (up to a constant) $c_i \in \mathfrak{z}(U(\gl_n))$ such that the center of $H_\df(\gl_n)$
	is a polynomial algebra in $t_i'=t_i+c_i$, $1 \leq i \leq n$. 
	\begin{definition}
	 The \emph{Casimir element} of $H_\df (\gl_n)$ is defined (up to a constant) as $t_1'$.
	 \end{definition} 
	 We will construct the Casimir element of $H_\df(\gl_n)$ 
	and prove that its action on the Verma module $M(\lambda)$ is given by $P(\lambda)=\sum_{j=0}^{m+1} w_j H_{j}(\lambda+\rho)$, where
$w_j$ are linear functions in $\df_i$.

\subsection{Center}
	Let us switch to the approach elaborated in \cite{EGG}, Section 4, where all deformations satisfying the PBW property were determined.
	Define $\delta^{(m)}=(i\partial)^m \delta$ with $\delta$ being a standard delta function
	at 0, i.e., 
	$\int \delta(\theta) \phi(\theta) d\theta=\phi(0)$.
	Let $f(z)$
	be a polynomial satisfying
	$f(z)-f(z-1) =  \partial^n (z^n \df(z) )$,
	where $\df(z)$ is the generating series of the deformation parameters: $\df(z)=\df_0+\df_1 z + \df_2 z^2+\cdots$.
	Since $f(z)$ is defined up to a constant, we can specify $f(0)=0$.
	Recall from \cite{EGG}, Section 4.2, that for $\hat{f}(\theta)=\sum_{m \geq 0} f_m \delta^{(m)}(\theta)$,
	\[
		[y,x]=\frac{1}{2 \pi^n}\int_{v\in \mathbb{C}^n: |v|=1} (x,(v\otimes \bar{v})y)
		\int_{-\pi}^{\pi} \left(1-e^{-i \theta}\right)\hat{f}(\theta)e^{i\theta(v\otimes \bar{v})} \, d\theta \, dv.
	\]
	
	\begin{theorem}
	\label{theorem:firstcentralelementtheorem}
		Let $g(z)=\sum g_m z^m=\sum \frac{f_m}{(m+1)(m+2)\cdots(m+n-1)}z^m$.
		The Casimir element of $H_\df(\gl_n)$ is given by
		$t_1'=\sum x_j y_j + \mathrm{Res} _{z=0} g(z^{-1})\det\left(1- zA\right)^{-1} dz/z$.
	\end{theorem}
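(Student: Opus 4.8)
The plan is to verify directly that the claimed element $t_1' = \sum_j x_j y_j + \operatorname{Res}_{z=0} g(z^{-1})\det(1-zA)^{-1}\,dz/z$ is central, and then to identify it with Tikaradze's generator $t_1 + c_1$ by matching top-degree terms. First I would unpack the residue term: expanding $\det(1-zA)^{-1} = \sum_{k\ge 0} H_k(A)\,z^k$ where $H_k$ is the $k$-th complete symmetric function of the eigenvalues (realized inside $U(\gl_n)$ via symmetrization, as in Section \ref{section:gln}), the residue $\operatorname{Res}_{z=0} g(z^{-1})\det(1-zA)^{-1}\,dz/z$ becomes a finite $\mathbb C$-linear combination $\sum_m g_m\,H_m(A)$ — an explicit central element of $U(\gl_n)$. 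So $t_1'$ is a sum of a quadratic piece $\sum x_j y_j$ and a correction $c_1 := \sum_m g_m H_m(A) \in \mathfrak z(U(\gl_n))$, which is the shape Tikaradze predicts. The substance is to pin down $c_1$ so that the commutators of $t_1'$ with $V$, $V^*$, and $\gl_n$ all vanish.

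The key computation is $[t_1', x_i]$ and $[t_1', y_i]$ (centrality against $\gl_n$ is automatic, since $\sum x_j y_j$ and any central element of $U(\gl_n)$ are $\gl_n$-invariant, and $H_\df(\gl_n)$ is built as a quotient of $U(\gl_n)\ltimes T(V\oplus V^*)$). Using $[y_j, x_i] = \df(y_j, x_i)$ and $[x_j, x_i] = 0$, one gets $[\sum_j x_j y_j,\, x_i] = \sum_j x_j\,\df(y_j, x_i)$ up to reordering terms, which by the $\gl_n$-invariance of $\df$ and the EGG formula $\df = \sum_j \df_j r_j$ with $r_j$ the symmetrization of the coefficient of $\tau^j$ in $(x,(1-\tau A)^{-1}y)\det(1-\tau A)^{-1}$ is an explicit element of $V^*\cdot U(\gl_n)$. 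This must be cancelled by $[c_1, x_i] = [\sum_m g_m H_m(A), x_i]$. The heart of the matter is the identity $f(z) - f(z-1) = \partial^n(z^n \df(z))$ defining $f$, together with $g_m = f_m/((m+1)\cdots(m+n-1))$: I expect that, after passing through the Fourier/delta-function presentation of $[y,x]$ recalled from \cite{EGG}, Section 4.2, these two normalizations are exactly what make $[c_1, x_i] = -[\sum_j x_j y_j, x_i]$ inside $H_\df(\gl_n)$. Concretely, commuting $H_m(A)$ past $x_i$ produces lower-order-in-$A$ terms governed by the action of $\gl_n$ on $V^*$, and the combinatorial factor $(m+1)\cdots(m+n-1)$ is precisely the ratio between the "naive" symmetrized commutator and the one dictated by the symmetrization of $r_j$; the shift $z \mapsto z-1$ in the functional equation for $f$ encodes the $(1 - e^{-i\theta})$ factor in the EGG integral. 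The same computation with $y_i$ in place of $x_i$ follows by applying the anti-involution $\sigma$, since $\sigma(t_1') = t_1'$ (both summands are $\sigma$-fixed).

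The main obstacle, and where I would spend most of the effort, is the bookkeeping in this last identity: translating between the generating-series language ($f$, $g$, $\df(z)$, the functional equation) and the honest noncommutative computation of $[H_m(A), x_i]$ in $U(\gl_n)\ltimes V^*$, keeping careful track of symmetrization (since $\beta_i$, $H_m(A)$, and $r_j$ are all defined via symmetrization maps, and these do not commute with multiplication). I would organize this by first checking the case $n=1$ by hand — where $H_\df(\gl_1)$ is a generalized Weyl algebra (Example \ref{example:gl1}), $\det(1-zA)^{-1} = (1-zh)^{-1}$, the functional equation reads $f(z)-f(z-1) = \partial(z\df(z))$, and everything becomes a one-variable polynomial identity — to fix the normalization of $g$, and then induct on $n$ or argue uniformly using the density of diagonalizable matrices (as in the proof of the top-term lemma in Section \ref{section:shapform}) to reduce commutator identities in $U(\gl_n)$ to polynomial identities in the eigenvalues $\lambda_i$. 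Once centrality is established, uniqueness of $t_1' = t_1 + c_1$ up to a constant is immediate from Tikaradze's theorem quoted above, and the formula for the action $P(\lambda) = \sum_j w_j H_j(\lambda+\rho)$ on $M(\lambda)$ follows by evaluating the residue term on the highest-weight vector (where $A$ acts by $\operatorname{diag}(\lambda)$) and commuting $\sum x_j y_j$ to the annihilating side, with the $\rho$-shift and the linear functions $w_j$ read off from $g$.
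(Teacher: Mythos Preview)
Your plan differs substantially from the paper's primary proof. The paper works entirely inside the EGG integral presentation: it writes $\sum_j [y,x_j]y_j = \frac{1}{2\pi^n}\int_{|v|=1}\int (1-e^{-i\theta})\hat f(\theta)\, e^{i\theta(v\otimes\bar v)}\otimes(v\otimes\bar v)y\,d\theta\,dv$ directly from the defining formula for $[y,x]$, then recognizes the residue term $C'$ as $\frac{1}{2\pi^n}\int_{|v|=1}\int\hat f(\theta)e^{i\theta(v\otimes\bar v)}\,d\theta\,dv$ via an identity $\sum_m f_m F_{m-1}(A)=2\pi^n C'$ already established in \cite{EGG}. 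The cancellation $\sum_j[y,x_j]y_j+[y,C']=0$ then drops out of the single group-theoretic fact $e^{-i\theta(v\otimes\bar v)}\,y\,e^{i\theta(v\otimes\bar v)}-y=(e^{-i\theta}-1)(v\otimes\bar v)y$, valid because $v\otimes\bar v$ has rank one. There is no symmetrization bookkeeping at all: the exponential does the work, and the anti-involution $\sigma$ handles $V^*$ as you say.

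Your approach --- expand $C'=\sum_m g_m H_m(A)$ and compute $[H_m(A),x_i]$ termwise --- is closer in spirit to the paper's \emph{second} proof of this theorem (given after Lemma~\ref{lemma:com2noncom}, via the Poisson algebra). But the step you flag as ``the main obstacle'' really is the entire content of that route, and the tools you propose do not resolve it. Density of diagonalizable matrices lets you identify an element of $S(\gl_n)$ by its values at diagonal $A$; it does not compute the commutator $[\mathrm{Sym}(H_m(A)),x_i]$ in $H_\df(\gl_n)$, because the lower-order corrections coming from the failure of symmetrization to commute with multiplication are invisible at the level of eigenvalues, and there is no evident induction on $n$. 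What the paper actually proves to make your route work is the explicit identity
\[
[\Tr S^{k+1}A,\,y]\;=\;\Bigl\{\sum_{j=0}^{k}\frac{(-1)^j}{k+n+1}\binom{k+n+1}{j+1}\Tr S^{k+1-j}A,\;y\Bigr\},
\]
relating the Lie bracket in $H_\df$ to the Poisson bracket in $H'_\df$ (Lemma~\ref{lemma:com2noncom}); its proof is a page of careful chain-counting inside symmetrized monomials, with no shortcut. Once one has this lemma, the functional equation $f(z)-f(z-1)=\partial^n(z^n\df(z))$, the definition of $g$, and the known Poisson center (Example~\ref{example:l1}) combine to finish exactly as you outline. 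So your overall architecture is sound and matches the paper's alternative argument, but the central computation is missing, and the integral proof the paper gives first bypasses it entirely.
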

	
	\begin{proof}
	Define $C'=\mathrm{Res}_{z=0} g(z^{-1})\det\left(1- zA\right)^{-1} dz/z$.
	Let us compute 
	$[y,t_1+C']= \sum_j [y,x_j] y_j + [y,C']$.
	The first summand is:
	\begin{eqnarray*}
		\sum_j [y,x_j] y_j &=& \frac{1}{2 \pi^n}\sum_j \int_{v \in \mathbb{C}^n: |v|=1} 
		\int_{-\pi}^{\pi} (1-e^{-i\theta})\hat{f}(\theta) e^{i\theta (v\otimes \bar{v})} (x_j,(v\otimes \bar{v})y)y_j \, d\theta \, dv
		\\
		&=&  \frac{1}{2 \pi^n}
		\int_{|v|=1} 
		\int_{-\pi}^{\pi} (1-e^{-i\theta})\hat{f}(\theta) e^{i\theta (v\otimes \bar{v})} \otimes (v\otimes \bar{v})y \, d\theta \, dv.
	\end{eqnarray*}
	
	Following \cite{EGG}, Section 4.2, we define
	$F_m(A)=\int_{|v|=1} \langle Av,v \rangle ^{m+1}\, dv = \int_{|v|=1} (v \otimes \bar{v})^{m+1} \, dv$.
	There, it was proven that 
	\[\sum_m f_m F_{m-1}(A)=2 \pi^n\mathrm{Res}_{z=0}g(z^{-1})\det(1-zA)^{-1} z^{-1} dz=2 \pi^n C'.\]
	Thus, we can write
	\[
		C'=\frac{1}{2 \pi^n} \sum_m f_m \int_{|v|=1} (v \otimes \bar{v})^m dv=\frac{1}{2 \pi^n} \int_{|v|=1} \int_{-\pi}^{\pi} \hat{f}(\theta) e^{i\theta (v\otimes \bar{v})}\, d\theta \, dv,
	\]
	which implies that
	$
		[y,C']=\frac{1}{2 \pi^n} \int_{|v|=1} \int_{-\pi}^{\pi} \hat{f}(\theta) [y, e^{i\theta (v\otimes \bar{v})}] \, d\theta \, dv 
	$.
	Since
	\begin{center}
	$
		e^{-i\theta (v\otimes \bar{v})} [y,e^{i\theta (v\otimes \bar{v})}] =
		e^{-i\theta (v\otimes \bar{v})} ye^{i\theta (v\otimes \bar{v})} -y
		=
		e^{-i\theta \mathrm{ad}(v\otimes \bar{v})} y-y
		=
		(e^{-i\theta}-1)(v\otimes \bar{v})y
	$,
	\end{center}
	we get
	$
		[y,C']=\frac{1}{2 \pi^n} \int_{|v|=1} \int_{-\pi}^{\pi} \hat{f}(\theta) e^{i\theta (v\otimes \bar{v})}(e^{-i\theta}-1)(v\otimes \bar{v})y \, d\theta \, dv
	$,
	and so
	$\sum_i [y,x_i] y_i + [y,C'] =0$
	as desired.
	By using the anti-involution $\sigma$ defined in the beginning of Section \ref{section:shapform}, this implies $[x,t_1+C']=0$ for any $x \in V^*$, while $[e_{ij},t_1+C']=0$ by \cite{T},
	and hence, $t_1'=t_1+C'$.
	\end{proof}
	\begin{remark}
	This proof resembles calculations in \cite{EGG}, Section 4. 
	In particular, Proposition 5.3 of \cite{EGG} provides a formula for the Casimir element of continuous Cherednik algebras. 
	However, adopting this formula for the specific case of infinitesimal Cherednik algebras is nontrivial and requires the above
	computations.
	\end{remark}
	
\subsection{Action of the Casimir Element on the Verma Module}
	\label{subsection:actiongln}
	\indent In this section, we justify our claim that the action of the Casimir element $t_1'$ is given by $	P(\lambda)=\sum_{j=0}^{m+1} w_j H_{j}(\lambda+\rho)$.
	Obviously, $t_1'$ acts by a scalar on $M(\lambda-\rho)$, which
	we will denote by $t_1'(\lambda)$. Since 
	$t_1'=\sum x_i y_i + C'$,
	$C' \in \mathfrak{z}(U(\mathfrak{g}))\cong S(\mathfrak{g})^G$, we see that $t_1'(\lambda)=C'(\lambda)$ where $C'(\lambda)$ denotes the 
	constant by which $C'$ acts on $M(\lambda-\rho)$.
	
	\begin{theorem}
		\label{theorem:casimirgln}
		Let 
		$w(z)$ be the unique degree $m+1$ polynomial satisfying
		$f(z)=(2\sinh (\partial/2))^{n-1} z^{n-1} w(z)$.
		Then
		\[t_1'(\lambda)= \sum_{p \geq 0} w_p H_{p}(\lambda).\]
	\end{theorem}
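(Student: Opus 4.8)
The plan is to compute the scalar $t_1'(\lambda)=C'(\lambda)$ by which the central element $C'=\mathrm{Res}_{z=0}g(z^{-1})\det(1-zA)^{-1}\,dz/z\in\mathfrak{z}(U(\gl_n))$ acts on $M(\lambda-\rho)$, passing through finite-dimensional representations. Being central, $C'$ acts on $M(\lambda-\rho)$ through the central character of $U(\gl_n)$, so $C'(\lambda)$ is a polynomial function of $\lambda$ and it suffices to evaluate it on the Zariski-dense set of $\lambda$ for which $\mu:=\lambda-\rho$ is dominant integral; there $C'(\lambda)=\mathrm{tr}_{L(\mu)}(C')/\dim L(\mu)$, with $L(\mu)$ the finite-dimensional irreducible $\gl_n$-module of highest weight $\mu$. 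To compute $\mathrm{tr}_{L(\mu)}(C')$ I would use the reformulation of $C'$ established inside the proof of Theorem~\ref{theorem:firstcentralelementtheorem}, namely $C'=\frac{1}{2\pi^n}\sum_m f_m\int_{|v|=1}(v\otimes\bar v)^m\,dv$, where $(v\otimes\bar v)^m$ is the ordinary $m$-th power in $U(\gl_n)$ of the rank-one element $v\otimes\bar v\in\gl_n$.

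The key observation is that for $|v|=1$ the matrix $v\otimes\bar v$ is $GL_n$-conjugate to $\mathrm{diag}(1,0,\dots,0)$, so $\mathrm{tr}_{L(\mu)}(e^{t\,(v\otimes\bar v)})$ does not depend on $v$ and equals the Weyl character $s_\mu(e^t,1,\dots,1)$, a Schur polynomial. Hence $\mathrm{tr}_{L(\mu)}((v\otimes\bar v)^m)=m!\,[t^m]\,s_\mu(e^t,1^{n-1})$ for every $v$ on the unit sphere, and integrating out $v$ gives
\[
\mathrm{tr}_{L(\mu)}(C')=\mathrm{const}\cdot f(\partial_t)\big[s_\mu(e^t,1^{n-1})\big]\big|_{t=0},\qquad f(\partial_t):=\sum_m f_m\,\partial_t^m,
\]
where here and below $\mathrm{const}$ is a nonzero constant depending only on $n$. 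Everything thus reduces to evaluating $f(\partial_t)\big[s_\mu(e^t,1^{n-1})\big]\big|_{t=0}$.

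The heart of the argument is a summation-by-parts against the difference operator $2\sinh(\partial/2)=e^{\partial/2}-e^{-\partial/2}$. Set $W(z)=z^{n-1}w(z)$, so that the defining relation of the theorem reads $f=(2\sinh(\partial/2))^{n-1}W$ as an identity of polynomials. Writing $2\sinh(\partial/2)$ as a shift operator, one checks directly on exponentials that
\[
f(\partial_t)\,\phi(t)\big|_{t=0}=W(\partial_t)\big[(2\sinh(t/2))^{n-1}\,\phi(t)\big]\big|_{t=0}
\]
for every finite sum of exponentials $\phi(t)=\sum_k a_k e^{kt}$ (both sides send $e^{kt}$ to $\sum_{j=1}^{n}\binom{n-1}{j-1}(-1)^{j-1}W(k+\rho_j)$). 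For $\mu$ dominant integral $s_\mu(e^t,1^{n-1})$ is such a finite sum, and specializing the Weyl character formula at $(e^t,1,\dots,1)$ gives
\[
(2\sinh(t/2))^{n-1}\,s_\mu(e^t,1^{n-1})=\mathrm{const}\cdot\sum_{j=1}^{n}\frac{\Delta(\lambda)}{\prod_{k\neq j}(\lambda_j-\lambda_k)}\,e^{\lambda_j t},\qquad \lambda=\mu+\rho,
\]
where $\Delta(\lambda)=\prod_{i<j}(\lambda_i-\lambda_j)$; for $n\geq3$ this requires a brief l'Hôpital computation, since the Weyl denominator has a zero of order $\binom{n-1}{2}$ at $(1,\dots,1)$. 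Applying $W(\partial_t)|_{t=0}$ to both sides of the last display, invoking the previous display, and dividing by $\dim L(\mu)=\Delta(\lambda)/\prod_{i<j}(j-i)$ collapses the $(2\sinh(t/2))^{n-1}$ factor and cancels $\Delta(\lambda)$, leaving $\mathrm{const}\cdot\sum_{j=1}^{n}W(\lambda_j)/\prod_{k\neq j}(\lambda_j-\lambda_k)$. Since $W(z)=\sum_{p}w_p z^{p+n-1}$, the interpolation identity $\sum_{j=1}^{n}\lambda_j^{\,p+n-1}/\prod_{k\neq j}(\lambda_j-\lambda_k)=H_p(\lambda)$ turns this into $\mathrm{const}\cdot\sum_{p}w_p H_p(\lambda)$; the remaining constant equals $1$, as one sees by a direct bookkeeping of the normalizations (or already from the $\gl_1$ case). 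As $t_1'(\lambda)=C'(\lambda)$ and both sides of the asserted identity are polynomial in $\lambda$, this completes the proof.

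I expect the main obstacle to be this last paragraph: recognizing that the finite-difference operator $(2\sinh(\partial/2))^{n-1}$ built into the definition of $f$ is exactly what cancels the Weyl denominator occurring in $s_\mu(e^t,1^{n-1})$, and carrying out the degenerate specialization ($x_2=\dots=x_n=1$) of the Weyl character formula cleanly enough (the l'Hôpital bookkeeping for $n\geq3$) that the whole computation collapses to the complete-homogeneous-symmetric-function expression $\sum_p w_p H_p(\lambda)$.
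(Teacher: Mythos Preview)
Your proposal is correct and follows essentially the same route as the paper: both reduce to finite-dimensional $\gl_n$-modules, evaluate the Weyl character at the degenerate point $(e^t,1,\dots,1)$ via a limiting argument, and invoke the partial-fraction identity $\sum_j \lambda_j^{p+n-1}/\prod_{k\neq j}(\lambda_j-\lambda_k)=H_p(\lambda)$. The only organizational difference is that the paper first integrates against $\hat f(\theta)$ to obtain coefficients $w_p'$ and then \emph{verifies} that $w'(z)$ satisfies $(2\sinh(\partial/2))^{n-1}z^{n-1}w'(z)=f(z)$, whereas you build that relation in from the start via the operator identity $f(\partial_t)\phi|_{t=0}=W(\partial_t)\big[(2\sinh(t/2))^{n-1}\phi\big]\big|_{t=0}$; this makes the cancellation with the Weyl denominator slightly more transparent but is mathematically the same step.
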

	\begin{proof}
		
		Because $C'(\lambda)$ is a polynomial in $\lambda$, we can consider a finite-dimensional representation of $U(\gl_n)$ 
		instead of the Verma module $M(\lambda-\rho)$ of $H_\df(\gl_n)$.
		For a dominant weight $\lambda-\rho$ (so that the highest weight $\gl_n$-module $V_{\lambda-\rho}$ is finite dimensional) we define the normalized trace
		$T(\lambda, \theta)=\mathrm{tr}_{V_{\lambda-\rho}} (e^{i\theta(v\otimes \bar{v})})/\dim V_{\lambda-\rho}$
		for any $v$ satisfying $|v|=1$ (note that $T(\lambda, \theta)$ does not depend on $v$). 
		To compute $T(\lambda, \theta)$, we will use the Weyl Character formula (see \cite{Ful}): $\chi_{\lambda-\rho}=\frac{\sum_{w\in W} (-1)^w e^{w\lambda}}{\sum_{w\in W}(-1)^w e^{w\rho}}$,
		where $W$ denotes the Weyl group (which is $S_n$ for $\gl_n$).
		However, direct substitution of $e^{i\theta(v\otimes \bar{v})}$ into this formula gives zero in the denominator, so instead we compute
		 $\lim_{\epsilon \rightarrow 0} \chi_{\lambda-\rho}( e^{i\theta(v\otimes \bar{v})+\epsilon\mu})$ for a general
		diagonal matrix $\mu$.
		
		Without loss of generality, we may suppose $v=y_1$, so that
		\[v\otimes \bar{v}=q=
		\left(
\begin{matrix}
	1 & 0 & \cdots & 0\\
	0 & 0 & \cdots & 0\\
	\vdots & \vdots & \ddots & \vdots \\
	0 & 0 & \cdots & 0
\end{matrix}
\right)
.\]
 Then
		\begin{align*}
		\lim_{\epsilon \rightarrow 0} \chi_{\lambda-\rho}( e^{i\theta(v\otimes \bar{v})+\epsilon\mu})
		&=\lim_{\epsilon \rightarrow 0} \frac{\sum_{w\in S_n} (-1)^w e^{\langle w\lambda, i \theta q + \epsilon \mu \rangle}}{\sum_{w\in S_n}(-1)^w e^{ \langle w\rho, i \theta q + \epsilon \mu \rangle}}\\
		&=
		\lim_{\epsilon \rightarrow 0} \frac{\sum_{w\in S_n} (-1)^w e^{\langle w\lambda, i \theta q + \epsilon \mu \rangle}}{\prod_{\alpha \in  \Delta^+(\gl_n)}
		(e^{\langle \alpha/2, i \theta q + \epsilon \mu \rangle}-e^{-\langle \alpha/2, i \theta q + \epsilon \mu\rangle})
		}.
		\end{align*} 
		Partition $\Delta^+(\gl_n)$ into $\Delta_1 \sqcup \Delta_2=\Delta^+(\gl_n)$,
		where $\Delta_1=\{e_{11}^*-e_{jj}^*: 1<j\leq n\}$.
		For $\alpha\in \Delta_1$, 
		\[
		\lim_{\epsilon \rightarrow 0}
		\left(e^{\langle \alpha/2, i \theta q + \epsilon \mu \rangle}-e^{-\langle \alpha/2, i \theta q + \epsilon \mu \rangle}\right)
		=e^{i\theta/2}-e^{-i\theta/2}=2i\sin\left(\frac{\theta}{2}\right),
		\]
		so
		$
		\lim_{\epsilon \rightarrow 0}
		\prod_{\alpha \in  \Delta_1}
		(e^{\langle \alpha/2, i \theta q + \epsilon \mu \rangle}-e^{-\langle \alpha/2, i \theta q + \epsilon \mu \rangle})^{-1}
		=\left(2i\sin\left(\frac{\theta}{2}\right)\right)^{1-n}$.
		
		Next, we compute the numerator. We can divide $S_n=\bigsqcup_{1\leq j \leq n} B_j$, where $B_j=\{w \in S_n | w(j)=1 \}$.
		Note that $B_j=\sigma_j \cdot S_{n-1}$,
		where $\sigma_j=(12\cdots j)$ and $S_{n-1}$ denotes the subgroup of $S_n$ corresponding to permutations
		of $\{1,2,\ldots,j-1,j+1,\ldots,n\}$.
		We can then write
		\begin{align*}
		\sum_{w\in B_j}(-1)^w e^{\langle w\lambda, i  \theta q + \epsilon \mu \rangle}&=
		\sum_{\sigma \in S_{n-1}} (-1)^{\sigma_j} (-1)^{\sigma} e^{i\theta \lambda_j} e^{\epsilon \langle \sigma_j \circ \sigma(\lambda), \mu \rangle}
		=(-1)^{j-1} e^{i \theta \lambda_j} e^{\epsilon \lambda_j \mu_1}
		\sum_{\sigma \in S_{n-1}} (-1)^{\sigma} e^{\epsilon \langle \sigma(\tilde{\lambda}_{j}), \tilde{\mu} \rangle}
		\end{align*}
		where $\widetilde{\lambda_j}=(\lambda_1, \ldots, \lambda_{j-1}, \lambda_{j+1},\ldots, \lambda_n)$
		and $\tilde{\mu}=(\mu_2,\ldots,\mu_n)$.
		
		Combining the results of the last two paragraphs, we get
		\begin{align*}
		&\lim_{\epsilon \rightarrow 0} \frac{\sum_{w\in S_n} (-1)^w e^{\langle w\lambda, i \theta q + \epsilon \mu \rangle}}{\prod_{\alpha \in  \Delta^+(\gl_n)}
		(e^{\langle \alpha/2, i \theta q + \epsilon \mu \rangle}-e^{-\langle \alpha/2, i \theta q + \epsilon \mu \rangle})
		}\\
		&=\lim_{\epsilon \rightarrow 0}
		\sum_{1 \leq j \leq n}(-1)^{j-1} \frac{e^{i\theta \lambda_j+\epsilon \lambda_j\mu_1}}{(2i\sin\frac{\theta}{2})^{n-1}}
		\frac{\sum_{\sigma \in S_{n-1}} (-1)^{\sigma} e^{\epsilon \langle \sigma(\tilde{\lambda}_{j}), \tilde{\mu} \rangle} }{\prod_{\alpha \in  \Delta_2}
		(e^{\langle \alpha/2, i \theta q + \epsilon \mu \rangle}-e^{-\langle \alpha/2, i \theta q + \epsilon \mu \rangle})}.
		\end{align*}
		Using the Weyl character formula again, 
		we see that
		\[
		\frac{\sum_{\sigma \in S_{n-1}} (-1)^{\sigma} e^{\epsilon \langle \sigma(\tilde{\lambda}_{j}), \tilde{\mu} \rangle}}{\prod_{\alpha \in  \Delta_2}
		(e^{\langle \alpha/2, \epsilon \mu \rangle}-e^{-\langle \alpha/2, \epsilon \mu\rangle})}
		=
		\Tr_{V_{\tilde{\lambda}_j-\tilde{\rho}}}(e^{\epsilon \tilde{\mu}})
		\]
		where $\tilde{\rho}$ is half the sum of all positive roots of $\gl_{n-1}$.
		Thus,
		\[
		\lim_{\epsilon \rightarrow 0}
		\frac{\sum_{\sigma \in S_{n-1}} (-1)^{\sigma} e^{\epsilon \langle \sigma(\tilde{\lambda}_{j}), \tilde{\mu}\rangle}}{\prod_{\alpha \in  \Delta_2}
		(e^{\langle \alpha/2, i \theta q + \epsilon \mu \rangle}-e^{-\langle \alpha/2, i \theta q + \epsilon \mu \rangle})}
		=
		\Tr_{V_{\tilde{\lambda}_j-\tilde{\rho}}}(1)
		=
		\dim V_{\tilde{\lambda}_j-\tilde{\rho}}.
		\]
		We substitute to obtain
		\begin{align*}
		\Tr_{V_{\lambda-\rho}} (e^{i\theta(v\otimes \bar{v})})=
		\sum_{1 \leq j \leq n}(-1)^{j-1} \frac{e^{i\theta \lambda_j}\dim V_{\tilde{\lambda}_j-\tilde{\rho}}}{(2i\sin\frac{\theta}{2})^{n-1}}.
		\end{align*}
		Our original goal was to calculate
		$T(\lambda, \theta)=\Tr_{V_{\lambda-\rho}} (e^{i\theta(v\otimes \bar{v})})/\dim V_{\lambda-\rho}$.
		We obtain
		\[
		T(\lambda, \theta)=\sum_{1 \leq j \leq n}(-1)^{j-1} \frac{e^{i\theta \lambda_j}\dim V_{\tilde{\lambda}_j-\tilde{\rho}}}{(2i\sin\frac{\theta}{2})^{n-1} \dim V_{\lambda-\rho}}.
		\]
		Using the dimension formula (\cite{Ful}, Equation 15.17):
		\[
		\dim V_{\lambda-\rho}=\prod_{1 \leq i < j \leq n} \frac{\lambda_i-\lambda_j}{j-i},
		\]
		we get
		$T(\lambda,\theta)=(2 i \sin(\theta/2))^{1-n}(n-1)! \sum_{j=1}^{n} \frac{e^{i\lambda_j \theta}}{\prod_{k \neq j} (\lambda_j-\lambda_k)}$.
		Since $\sum_{j=1}^{n} \frac{x_j^m}{\prod_{k\neq j}(x_j-x_k)} = H_{m-n+1} (x_1, ..., x_n)$,
		we have
		\begin{center}
		$T(\lambda,\theta)=(2 i \sin(\theta/2))^{1-n}(n-1)! \sum_{p\geq 0} \frac{H_p(\lambda) (i\theta)^{p+n-1}}{(p+n-1)!}$.
		\end{center}
		
		Thus, we get 
		\begin{align*}
		t_1'(\lambda)&=C'(\lambda)=\left(\frac{1}{2 \pi^n} \int_{|v|=1}\!\! \int_{-\pi}^{\pi} \hat{f}(\theta) e^{i\theta (v\otimes \bar{v})} d\theta dv \right)(\lambda)=\frac{1}{(n-1)!}\int_{-\pi}^{\pi}\hat{f}(\theta) T(\lambda, \theta) d\theta  \\
							&=\int_{-\pi}^{\pi} \hat{f}(\theta) (2 i \sin(\theta/2))^{1-n} \sum_{p\geq 0} \frac{H_p(\lambda) (i\theta)^{p+n-1}}{(p+n-1)!} d\theta
							=
							\sum_{p \geq 0} w_p' H_{p}(\lambda),
		\end{align*}
		where
		$w_p'=\int_{-\pi}^{\pi} \hat{f}(\theta) (2 i \sin(\theta/2))^{1-n} \frac{ (i\theta)^{p+n-1}}{(p+n-1)!} d\theta$.
		Let $w'(z)=\sum w_p' z^p$.		
		We verify that
				\begin{align*}
			\left(e^{\partial/2}-e^{-\partial/2} \right)^{n-1}\!\! z^{n-1} w'(z)
					&=
						\int_{-\pi}^{\pi} \hat{f}(\theta) \sum_{p \geq 0} 
								(2 i \sin(\theta/2))^{1-n}\! \left(e^{\partial/2}-e^{-\partial/2} \right)^{n-1}\!\! \frac{ (iz\theta)^{p+n-1}}{(p+n-1)!}\, d\theta
								\\
					&=
						\int_{-\pi}^{\pi} \hat{f}(\theta)
								(2 i \sin(\theta/2))^{1-n}\! \left(e^{\partial/2}-e^{-\partial/2} \right)^{n-1}\!\! e^{iz\theta}\, d\theta
					\\
					&=
						\int_{-\pi}^{\pi} \hat{f}(\theta)
								(2 i \sin(\theta/2))^{1-n}\! \left(e^{i\theta/2}-e^{-i\theta/2} \right)^{n-1}\!\! e^{iz\theta} \, d\theta
					\\
					&=
						\int_{-\pi}^{\pi} \hat{f}(\theta) e^{iz\theta} \, d\theta
					= f(z),
		\end{align*}
		and it is easy to see that the polynomial solution to $f(z)=(2\sinh (\partial/2))^{n-1} z^{n-1} w(z)$ is unique.
	\end{proof}

$\ $

\section{Finite Dimensional Representations}
\label{section:finite} \indent In this section, we investigate when
the irreducible $H_\df(\gl_n)$ representation $L(\lambda)$ is finite
dimensional. As in the case of classical Lie algebras, any finite
dimensional irreducible representation is isomorphic to $L(\lambda)$
for a unique weight $\lambda$. Theorem~\ref{theorem:classification}
provides a necessary and sufficient condition for $L(\lambda)$ to be
finite dimensional. In particular, all such representations have a
\emph{rectangular form}.

In Section \ref{subsection:finiteexistence}, we prove that for any
allowed \emph{rectangular form} there exists a deformation $\df$
such that the representation $L(\lambda)$ of $H_\df(\gl_n)$ has exactly that shape.

\subsection{Rectangular Nature of Irreducible Representations}
\label{subsection:finitecharacterization}

\begin{theorem}
\label{theorem:classification} \normalfont{(a)} The representation $L(\lambda)$
is finite dimensional if and only if $\lambda$ is a dominant $\gl_n$
weight and there exists $\nu_n \in \mathbb{N}_0$ such that
$P(\lambda)=P(\lambda-(0, \ldots, 0, \nu_n+1))$.

 For every $1\leq i\leq n-1$ 
let $k_i\in \mathbb{N}_0$  be the smallest nonnegative integer such that
$P(\lambda)=P(\lambda-(0,\ldots,0,k_i+1,0,\ldots,0))$
(we set $k_i=\infty$ if no such nonnegative integer exists).
We define parameters $\nu_i = \min(k_i,\lambda_i-\lambda_{i+1})$.

\normalfont{(b)} If $L(\lambda)$ is finite dimensional, then as a $\gl_n$ module
it decomposes into
\[
L(\lambda)=\bigoplus_{0 \leq \lambda-\lambda' \leq \nu} V_{\lambda'},
\]

where $\nu=(\nu_1,\ldots,\nu_n)$ are the parameters defined above (depending on $\df$ and $\lambda$).
\end{theorem}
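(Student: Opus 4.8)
The plan is to settle part (a) directly and then bootstrap the branching rule in part (b) from an auxiliary module playing the role of a parabolic Verma module. First note that if $L(\lambda)$ is finite dimensional then $U(\gl_n)v_\lambda$ is a finite dimensional highest weight $\gl_n$-module with highest weight $\lambda$, so $\lambda$ is a dominant $\gl_n$-weight; assume this from now on. Write $\epsilon_i\in\mathfrak h^*$ for the weight of $y_i$. Using the PBW property and $[y,x]\in U(\gl_n)$ one checks that $V=\mathrm{span}(y_i)$ annihilates the subspace $U(\mathfrak n^-_{\gl_n})v_\lambda\subset M(\lambda)$; hence the $\gl_n$-singular vectors of weight $s_i\cdot\lambda$ in $M(\lambda)$ are already $H_\df(\gl_n)$-singular, and the quotient $N(\lambda)$ of $M(\lambda)$ by the submodule they generate satisfies $N(\lambda)\cong\bigoplus_{d\ge0}S^d(V^*)\otimes V_\lambda$ as a $\gl_n$-module (the $x$-degree filtration is by $\gl_n$-submodules with finite dimensional graded pieces, hence splits $\gl_n$-equivariantly). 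Thus $N(\lambda)$ is multiplicity free over $\gl_n$ with set of types $\mathcal S=\{\lambda':0\le\lambda_i-\lambda'_i\le\lambda_i-\lambda_{i+1}\ (i<n),\ \lambda'_n\le\lambda_n\}$ by the Pieri rule, every $H_\df(\gl_n)$-submodule of $N(\lambda)$ is a sum of isotypic components $\bigoplus_{\lambda'\in D}V_{\lambda'}$, and $L(\lambda)=\bigoplus_{\lambda'\in T}V_{\lambda'}$ for some $T\subseteq\mathcal S$ with $\lambda\in T$ and $\mathcal S\setminus T$ closed under the $H_\df(\gl_n)$-action.

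For part (a): inside $\mathcal S$ only the $\epsilon_n$-direction is unbounded, so $L(\lambda)$ is finite dimensional iff $V_{\lambda-d\epsilon_n}$ is killed for $d\gg0$, iff $x_n^d v_\lambda\in\overline M(\lambda)$ for some $d$ (one checks $x_n^d v_\lambda$ spans the $\gl_n$-highest weight line of $V_{\lambda-d\epsilon_n}$, and $M(\lambda)_{\lambda-d\epsilon_n}$ is one dimensional, since no positive root other than $\epsilon_n$ can appear in a partition of $d\epsilon_n$). The key computation is the identity $y_n(x_n^d v_\lambda)=\bigl(P(\lambda)-P(\lambda-d\epsilon_n)\bigr)x_n^{d-1}v_\lambda$: apply the central element $t_1'=\sum_j x_jy_j+C'$, which acts by $P(\lambda)$; since $C'\in\mathfrak z(U(\gl_n))$ and $x_n^d v_\lambda$ generates a highest weight $\gl_n$-module of highest weight $\lambda-d\epsilon_n$, $C'$ acts on $x_n^d v_\lambda$ by $P(\lambda-d\epsilon_n)$ (using $P(\mu)=\chi_\mu(C')$, which follows from $t_1'v_\mu=C'v_\mu$), and every term $x_jy_j(x_n^d v_\lambda)$ with $j\ne n$ vanishes on weight grounds. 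Hence $x_n^d v_\lambda$ is singular iff $P(\lambda)=P(\lambda-d\epsilon_n)$, and combining the one dimensionality of $M(\lambda)_{\lambda-d\epsilon_n}$ with the Shapovalov determinant of Theorem~\ref{theorem:shapovalov} shows $x_n^d v_\lambda\in\overline M(\lambda)$ forces $P(\lambda)=P(\lambda-k\epsilon_n)$ for some $1\le k\le d$. This proves part (a) and lets us take $\nu_n$ to be the least $d-1$ with $P(\lambda)=P(\lambda-d\epsilon_n)$.

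For part (b), fix this $\nu_n$ and set $\nu_i=\min(k_i,\lambda_i-\lambda_{i+1})$. For each $i$ let $w^{(i)}_d$ be the $\gl_n$-highest weight vector of the copy of $V_{\lambda-d\epsilon_i}$ in $N(\lambda)$ (present whenever $\lambda-d\epsilon_i\in\mathcal S$). One aims to prove the axis identity $y_i\,w^{(i)}_d=c^{(i)}_d\,w^{(i)}_{d-1}$ with $c^{(i)}_d$ equal to $P(\lambda)-P(\lambda-d\epsilon_i)$ up to a nonzero scalar: here $y_jw^{(i)}_d=0$ for $j<i$ by weights, $y_iw^{(i)}_d$ is again $\gl_n$-highest weight of the right weight, and (by $\gl_n$-equivariance of $V_{\lambda-d\epsilon_i}\otimes V\to N(\lambda)$, the only Pieri component landing in $\mathcal S$ being $V_{\lambda-(d-1)\epsilon_i}$) the correction terms $y_jw^{(i)}_d$ for $j>i$ lie in the single submodule $V_{\lambda-(d-1)\epsilon_i}$, so projecting the Casimir relation $\sum_j x_jy_jw^{(i)}_d=(P(\lambda)-P(\lambda-d\epsilon_i))w^{(i)}_d$ onto the $V_{\lambda-d\epsilon_i}$-isotypic component isolates $c^{(i)}_d$. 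The upper bound $T\subseteq\{0\le\lambda-\lambda'\le\nu\}$ then follows because at the first exit point $d=k_i+1$ (with $k_i<\lambda_i-\lambda_{i+1}$), or $d=\nu_n+1$ for $i=n$, one has $c^{(i)}_d=0$, whence $y_iw^{(i)}_d=0$; iterating shows $y_jw^{(i)}_d$ is $\gl_n$-highest weight of weight $\lambda-d\epsilon_i+\epsilon_j$ for each $j>i$, a weight not of the form $\lambda'\in\mathcal S$ (its $j$-th coordinate exceeds $\lambda_j$), so these all vanish and $w^{(i)}_d$ is $H_\df(\gl_n)$-singular; the submodule it generates is a quotient of the analogous module $N(\lambda-d\epsilon_i)$, hence contains every $V_{\lambda'}$ with $\lambda_i-\lambda'_i>\nu_i$, and summing over $i$ gives the inclusion. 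For the lower bound, each $V_{\lambda'}$ with $0\le\lambda-\lambda'\le\nu$ is reached from $v_\lambda$ along a path decreasing one coordinate at a time inside the rectangle; the relevant $x_i$-multiplication component maps $V_{\lambda''}\to V_{\lambda''-\epsilon_i}$ are nonzero in $N(\lambda)$ (a Pieri/polynomial-ring statement), and since $P(\lambda)\ne P(\lambda-j\epsilon_i)$ for every $j\le\lambda_i-\lambda'_i\le\nu_i$ that occurs, none of the $w^{(i)}_d$ along the path becomes singular—equivalently, $\mathcal S\setminus T$ (the largest $H_\df(\gl_n)$-stable subset of $\mathcal S$ missing $\lambda$) cannot meet the rectangle, else it would contain a singular vector of $N(\lambda)$ at an interior point, contradicting minimality of the $k_i$ and of $\nu_n$ via the Shapovalov determinant. (An alternative to the axis identities is a Jantzen-filtration/sum-formula computation of $\mathrm{ch}\,L(\lambda)$ that directly produces the rectangle.)

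The main obstacle is the axis identity for $i<n$: unlike $x_n^d v_\lambda$, the vector $w^{(i)}_d$ is not a monomial in the $x$'s but carries $\df$-dependent lower-order corrections forced by $\gl_n$-highest-weight-ness, and the Casimir relation then acquires the extra terms $\sum_{j>i}x_jy_jw^{(i)}_d$; proving that these contribute nothing to the $V_{\lambda-d\epsilon_i}$-component beyond $c^{(i)}_d$ and that $c^{(i)}_d$ is indeed proportional to $P(\lambda)-P(\lambda-d\epsilon_i)$ for all $d$ (not just at the exit point) is the technical heart. I expect this to need either an explicit, Mickelsson-algebra-style formula for $w^{(i)}_d$ fed through the relation $[y,x]=\df(y,x)$, or an induction on $n$ via a suitable $\gl_{n-1}$-reduction; equivalently, if one uses the Jantzen route, the obstacle becomes showing that the (now non-reflection-group) linkage induction terminates and assembles exactly into the rectangle. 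The structural lemmas of the first step ($V$ annihilating $U(\mathfrak n^-_{\gl_n})v_\lambda$, the $\gl_n$-equivariant splitting of the $x$-degree filtration of $N(\lambda)$, and non-vanishing of the $x_i$-multiplication component maps) are routine but must be verified.
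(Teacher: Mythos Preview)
Your treatment of part (a) matches the paper's in substance: both pin down the $\epsilon_n$-axis condition via the Casimir eigenvalue $P$ and the Shapovalov determinant. Your explicit identity $y_n(x_n^d v_\lambda)=(P(\lambda)-P(\lambda-d\epsilon_n))x_n^{d-1}v_\lambda$ is a tidy way to package what the paper phrases as the existence of a Verma embedding. (Your intermediate module $N(\lambda)$ is also, in effect, what the paper is working with when it writes $M(\lambda)\cong\bigoplus_k V_\lambda\otimes S_k$ and invokes Pieri.)

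For part (b), however, the paper takes a genuinely different and much shorter route that entirely bypasses the axis identities you flag as the main obstacle. After setting up the directed graph on $\gl_n$-types (forward edges $=$ decrements, coming from the $V^*$-action), the paper observes that since $L(\lambda)$ is finite dimensional and irreducible it is also a \emph{lowest} weight module, with lowest-weight vertex $\bar b$. If the set $\bar S$ of surviving types were not a coordinate box, one could find a maximal vertex $\bar e\in S$ with at least two nonzero coordinates each of whose increment-neighbors lies in $\bar S$. Since $\bar b$ generates $L(\lambda)$ under $\mathfrak n^+$, every vertex of $\bar S$ is reachable from $\bar b$ by reverse (incrementing) walks; in particular both $(e_1+1,e_2,\dots,e_n)$ and $(e_1,e_2+1,\dots,e_n)$ are, and a one-line rerouting then reaches $\bar e$ itself, contradicting $\bar e\in S$. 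Once $\bar S$ is known to be a box, its side-lengths are identified with the $\nu_i$ by the Shapovalov determinant: the first vertex on each axis lying in $S$ supports a singular vector, hence a Verma embedding and a $P$-equation, and minimality of $k_i$ finishes.

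So the obstacle you identify is real for your route but is simply absent in the paper's. In particular, your Casimir projection step does not isolate $c^{(i)}_d$: the relation $\sum_j x_jy_j\,w^{(i)}_d=(P(\lambda)-P(\lambda-d\epsilon_i))\,w^{(i)}_d$ already lies wholly inside $V_{\lambda-d\epsilon_i}$, so projecting onto that isotypic component gains nothing, and the cross-terms $x_jy_jw^{(i)}_d$ with $j>i$ can contribute nontrivial multiples of $w^{(i)}_d$. The Mickelsson-type formulas, $\gl_{n-1}$-reduction, or Jantzen sum formula you contemplate would presumably work, but the lowest-weight convexity trick replaces the entire computation.
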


\begin{proof}
In order for $L(\lambda)$ to be finite dimensional,
it is clearly necessary for $\lambda$ to be a dominant $\gl_n$ weight. 
Recalling the PBW property and the definition
of the Verma module $M(\lambda)$, we see that as a $\gl_n$ module,
$M(\lambda)$ decomposes as $M(\lambda)= V_\lambda \oplus (V_\lambda \otimes
S_1) \oplus (V_\lambda \otimes S_2) \oplus \cdots $, where
$S_k=\mathrm{Sym}^k(x_1,x_2,...,x_n)$.
We can further decompose each
 $V_\lambda \otimes S_i$ into irreducible modules of $\gl_n$;
once we do so, we find that $M(\lambda)$ has a simple $\gl_n$
spectrum. Note that $V_{\mu} \otimes S_1$ can be decomposed as
$V_{\mu-e_{11}^*} \oplus V_{\mu-e_{22}^*} \oplus \cdots \oplus
V_{\mu-e_{nn}^*}$ (taking
$V_{\mu-e_{ii}^*}=\{0\}$ if $\mu-e_{ii}^*$ is not dominant). We can
thus associate each $V_\mu$ for $\mu=\lambda-a_1 e_{11}^* - \cdots -
a_n e_{nn}^*$ in the decomposition of $M(\lambda)$ with a lattice
point $P_{\mu}=(-a_1,-a_2,\ldots,-a_n)\in \mathbb{Z}^n$. We draw a
directed edge from $P_{\mu}$ to $P_{\mu'}$ if $V_{\mu'}$ is in the
decomposition of  $V_{\mu} \otimes S_1$, and we say $P_{\mu'}$ is
\emph{smaller} than $P_{\mu}$. A key property of this graph is that any
$H_\df(\gl_n)$-submodule of $M(\lambda)$ intersecting the module
$V_{\mu}$ must necessarily contain $V_{\mu}$ and all $V_{\mu'}$ such
that $P_{\mu'}$ is reachable from $P_{\mu}$ by a walk along directed
edges. Recall that $L(\lambda)=M(\lambda)/\overline{M}(\lambda)$, where
$\overline{M}(\lambda)$ is the maximal proper $H_\df(\gl_n)$-submodule of
$M(\lambda)$. The aforementioned property guarantees that as a
$\gl_n$ module, $\overline{M}(\lambda)=\bigoplus_{s\in S} V_s$ for some set $S$
of vertices closed under walks, so that $L(\lambda)$ is finite dimensional
if and only if $\bar{S}$ (the complement of
$S$) is a finite set.

%Now let us consider vertices $\{(0,\ldots,0,-l)|l\in \mathbb{N}_0\}$ of this graph.
We now prove part (a). 
First, suppose that $L(\lambda)$ is finite dimensional.
The finiteness of $\bar{S}$ implies the existence of some $l$ such that
$(0,\ldots,0,-l-1)\in S$ (note that $(0,\ldots,0)\notin S$). Let
$\nu_n$ be the minimal such $l$. We define $S'$ as the set of
vertices that can be reached by walking from
$(0,\ldots,0,-\nu_n-1)$.
Because $S' \subseteq S$,
the Verma module $M(\lambda)$ must possess a submodule $M(\lambda-(0, \ldots, 0, \nu_n+1))$.
By considering the action of the Casimir element on $M(\lambda)$ and $M(\lambda-(0, \ldots, 0, \nu_n+1))$,
we get $P(\lambda)=P(\lambda-(0,\ldots,0, \nu_n+1))$.

Next, suppose that there exists $\nu_n \in \mathbb{N}_0$ such that
$P(\lambda)=P(\lambda-(0, \ldots, 0, \nu_n+1))$. The determinant formula of Theorem \ref{theorem:shapovalov} 
implies that the Verma module $M(\lambda)$ contains the submodule $M(\lambda-(0, \ldots, 0, \mu))$ for some 
$\mu \leq \nu_n$.
Define $S'$ to be the set of
vertices that can be reached by walking from
$(0,\ldots,0,-\mu)$.
Its
complement $\bar{S'}$ is finite, since for any vertex 
$(-a_1,\ldots,-a_n)$ of our graph, we have $\lambda_1-a_1\geq
\lambda_2-a_2\geq \cdots\geq \lambda_n-a_n$.
Because $\bar{S} \subseteq \bar{S'}$, $\bar{S}$ is finite, finishing the proof of (a). 
We note that explicitly, $\bar{S'}=\{(-a_1,\ldots,-a_n)|0\leq a_i\leq\lambda_i-\lambda_{i+1}, 0\leq a_n\leq \nu_n\}$ and the corresponding finite dimensional quotient
is $L'(\lambda)=M(\lambda)/(\sum_{1\leq i\leq n-1}{ H_\df(\gl_n) e_{i+1,i}^{\lambda_i-\lambda_{i+1}+1}v_\lambda+ H_\df(\gl_n)x_n^{\nu_n+1}v_\lambda})$. 

 Part (b) requires an additional argument. Namely, if $L(\lambda)$ is finite
dimensional, then it can also be considered as a lowest weight
representation. Let $\bar{b}=(b_1,\ldots,b_n)\in \bar{S}$ be the
vertex corresponding to the lowest weight of $L(\lambda)$.
 If the statement of (b) was wrong, there would be a vertex $\bar{e}=(e_1,\ldots,e_n)\in S$ with two nonzero coordinates, such that
$(e_1,\ldots,e_{i-1},e_i+1,e_{i+1},\ldots,e_n)\in \bar{S}$ for any $i$. Without loss
of generality, suppose $e_1, e_2\ne 0$.
 As we can walk along reverse
edges from $\bar{b}$ to both points $(e_1+1,e_2,\ldots,e_n)$ and
$(e_1,e_2+1,e_3,\ldots,e_n)$, we can also walk along reverse edges
to $\bar{e}$, which is a contradiction. This proves part (b) and
explains our terminology ``\emph{rectangular form}''.
\end{proof}

\begin{figure}[h]
    \centering
        \includegraphics[width=0.58\textwidth]{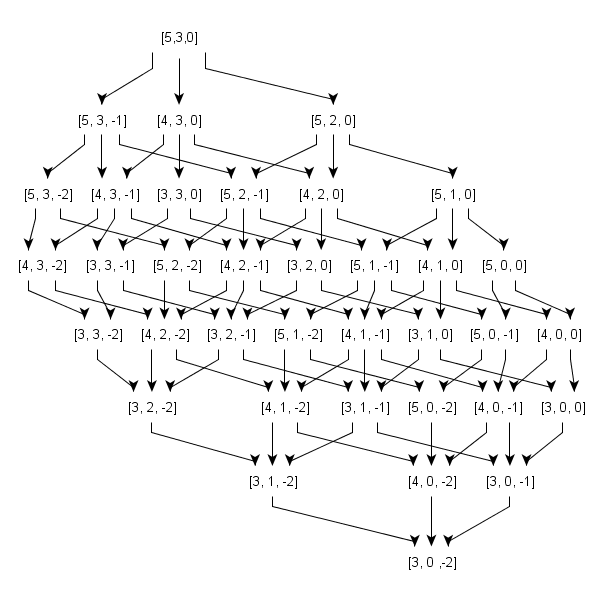}
    \caption{
    We use a graph to represent the rectangular prism corresponding to a finite dimensional representation $L((5,3,0))$ of $H_\df(\gl_3)$,
    with the highest weight of each $\gl_3$ module indicated.
    }
\end{figure}

The decomposition of $L(\lambda)$ as a $\gl_n$ module
provides the character formula for $L(\lambda)$ as the sum of the characters
of $\gl_n$ modules:
\begin{equation}
\tag{*} \chi_{\lambda;\df}= \sum_{0 \leq \lambda-\lambda' \leq \nu}
\frac{\sum_{w\in S_n}(-1)^w  e^{w (\lambda'+\rho)}}{\sum_{w\in
S_n}(-1)^w  e^{w\rho}}.
\end{equation}
As in the classical theory, this character allows us to calculate the decomposition of finite dimensional representations
into irreducible ones.

\begin{example}
Let us illustrate the decomposition of $L(\lambda)$ from the proof of Theorem \ref{theorem:classification};
for clarity, we will work with $\mathfrak{sl}_2$ representations instead of $\gl_2$ representations.
Using the notation of the proof,
$S_k=S^k(x_1,x_2)\cong V_{k}$, the irreducible 
$\mathfrak{sl}_2$ representation of dimension $k+1$.
By the Clebsch-Gordon formula,
\[
V_m \otimes V_{k}
\cong V_{m+k} \oplus V_{m+k-2} \oplus \cdots \oplus V_{m+k-2\min(k,m)}.
\]
We can use the above formula to draw the graph, as in Figure 2,
representing the decomposition of $L((2,0))$, with $\nu=(0,3)$, into
$\mathfrak{sl}_2$ modules. This representation is the quotient of
$M((2,0))/H_\df(\gl_2)e_{21}^3 v_{\lambda}$ by the submodules represented
by the shaded areas of the diagram, and $L((2,0))\cong
V_2 \oplus V_3 \oplus V_4 \oplus V_5$ as $\mathfrak{sl}_2$ modules.

\begin{figure}
    \centering
        \includegraphics[width=0.9\textwidth]{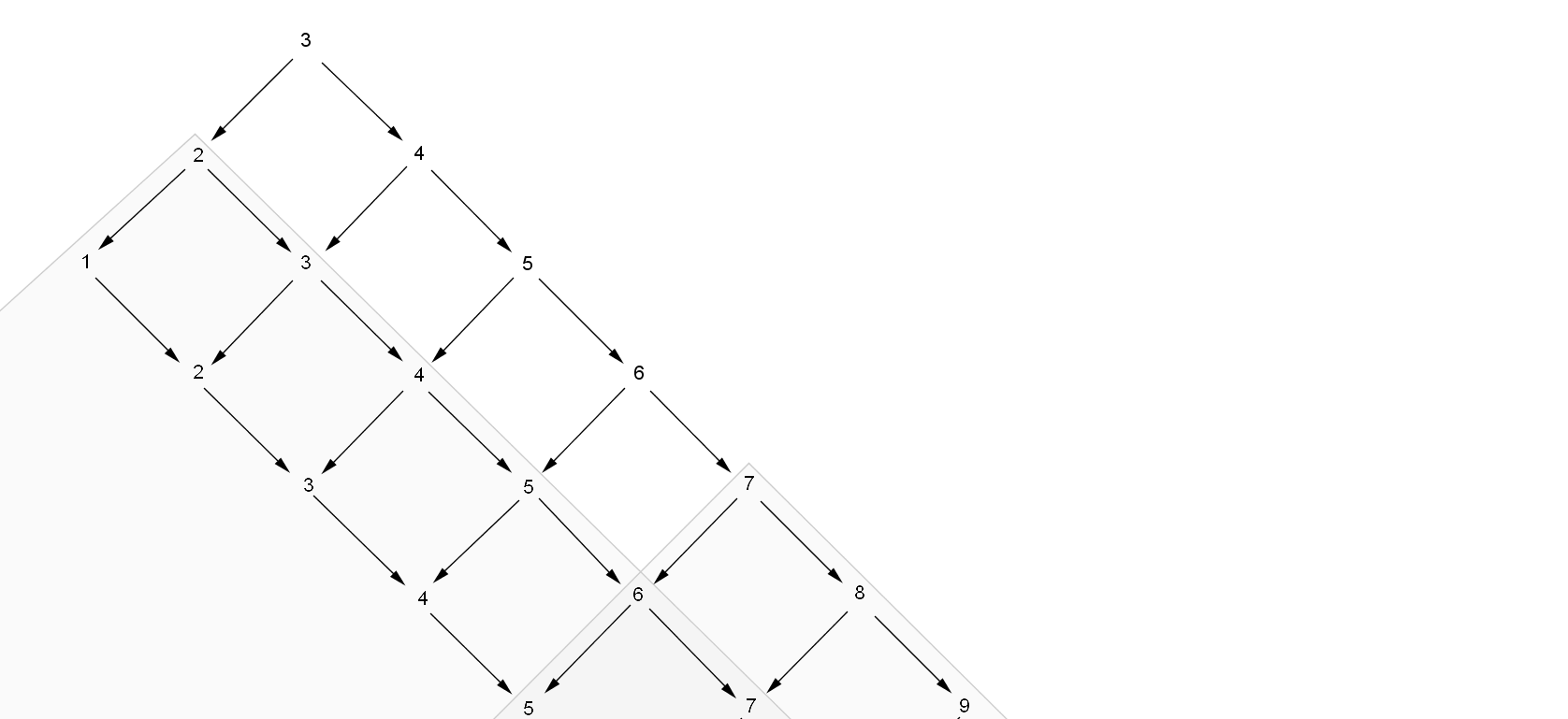}
        \caption{The decomposition of $L((2,0))$, with $\nu=(0,3)$, into $\mathfrak{sl}_2$ modules.}
\end{figure}
\end{example}

\begin{example}
For $H_\df(\gl_1)$,
the irreducible finite dimensional representation $L(\lambda)$, for $\lambda \in \mathbb{C}$, has character
$\chi_{\lambda,\df}= \sum_{\nu'=0}^{\nu} e^{\lambda-\nu'}$,
where $\nu$ is some nonnegative integer.
If we describe $H_\df(\gl_1)$ as in Example \ref{example:gl1}, we can
easily calculate the Casimir element to be $fe+g(h)$, where $g$ satisfies the equation $g(x)-g(x-1)=\phi(x)$.
Then, $\nu$ is the smallest nonnegative integer such that $g(\lambda)-g(\lambda-\nu-1)=0$.
\end{example}

\begin{example}
For $H_\df(\gl_2)$, the irreducible finite dimensional representations are necessarily of the form $L(\lambda)$ with
$\lambda=(\lambda_2+m,\lambda_2)$, where $\lambda_2 \in \mathbb{C},\  m\in \mathbb{N}_0$.
The character of $L(\lambda)$ equals
\[
\chi_{\lambda;\df}= \sum_{(0,0) \leq (\nu_1',\nu_2') \leq
(\nu_1,\nu_2)}
\frac{e^{(\lambda_2+m-\nu_1',\lambda_2-\nu_2')}-e^{(\lambda_2-\nu_2'-1,\lambda_2+m-\nu_1'+1)}}{1-e^{(-1,1)}}.
\]
Let
$f_1(\lambda,\mu)=P(\lambda_2+m+\frac{1}{2},\lambda_2-\frac{1}{2})-P(\lambda_2+m+\frac{1}{2}-\mu,\lambda_2-\frac{1}{2})$
and 
$f_2(\lambda,\mu)=P(\lambda_2+m+\frac{1}{2},\lambda_2-\frac{1}{2})-P(\lambda_2+m+\frac{1}{2},\lambda_2-\mu-\frac{1}{2})$.
 Again, $\nu_2$ is defined as the minimal nonnegative integer
satisfying
$f_2(\lambda,\nu_2+1)=0$,
while $\nu_1$ is either $m$ or the minimal nonnegative integer
satisfying
$f_1(\lambda,\nu_1+1)=0$.
 For instance, if $\df=\df_0 r_0$ with $\df_0 \neq 0$, then 
$f_2(\lambda,\mu)$ is a multiple of $\mu$, and so the only
solution to the equation $f_2(\lambda,\nu_2+1)=0$ is $\nu_2=-1$,
which is negative. Thus, $H_{\df_0 r_0}(\gl_2)$ has no finite
dimensional irreducible representations.
 If $\df=\df_0 r_0+\df_1 r_1$ with $\df_1 \neq 0$,
$P(\lambda)=\df_0(\lambda_1+\lambda_2)+\df_1((\lambda_1+\frac{1}{2})^2+(\lambda_1+\frac{1}{2})(\lambda_2-\frac{1}{2})+(\lambda_2-\frac{1}{2})^2)$,
so
$f_2(\lambda,\mu)=\df_1\mu\left(\frac{\df_0}{\df_1}+\lambda_1+2\lambda_2-\mu
\right)$. Thus, $L(\lambda)$ is finite dimensional if and only if
$\frac{\df_0}{\df_1}+\lambda_1+2\lambda_2$  is a positive integer. This agrees with the description of finite dimensional representations of $\mathfrak{sl}_3$.
\end{example}

\subsection{Existence of $L(\lambda)$ with a given shape}
\label{subsection:finiteexistence}
\begin{theorem}
For any $\gl_n$ dominant weight $\lambda$ and $\nu \in \mathbb{N}_0^n$ such
that $\nu_i\leq \lambda_i-\lambda_{i+1}$ for all $1\leq i\leq n-1$,
there exists a deformation $\df$, such that the irreducible
representation $L(\lambda)$ of $H_\df(\gl_n)$ is finite dimensional
and its character is given by \normalfont{(*)}.
\end{theorem}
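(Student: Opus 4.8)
The plan is to reduce the whole problem to the choice of a single one‑variable polynomial. Recall from Theorems~\ref{theorem:firstcentralelementtheorem} and~\ref{theorem:casimirgln} that a PBW deformation $\df$ is equivalent, through the chain $\df\leftrightarrow f\leftrightarrow w$, to a polynomial $w(z)=\sum_{p\ge0}w_pz^p$ (of degree $m+1$), and that the Casimir $t_1'$ acts on $M(\lambda)$ by the scalar $P(\lambda)=\sum_p w_p H_p(\lambda+\rho)$; moreover every $w$ arises from some PBW deformation, since from $w$ one recovers $f=(2\sinh(\partial/2))^{n-1}z^{n-1}w(z)$ and then solves $f(z)-f(z-1)=\partial^n(z^n\df(z))$ for the polynomial $\df(z)=\sum\df_jz^j$. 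Put $\mu=\lambda+\rho$, so that $\mu_1>\mu_2>\dots>\mu_n$ lie in one coset of $\mathbb Z$ with $\mu_i-\mu_{i+1}=\lambda_i-\lambda_{i+1}+1\ge\nu_i+1$, and $W(t)=t^{n-1}w(t)$. The identity $\sum_j x_j^m/\prod_{k\ne j}(x_j-x_k)=H_{m-n+1}(x)$ used in the proof of Theorem~\ref{theorem:casimirgln} says exactly that $P(\lambda)=W[\mu_1,\dots,\mu_n]$, the $(n-1)$-st divided difference. Hence, by the divided‑difference recursion, for each $i$ and each $s$
\[
P(\lambda)-P\big(\lambda-(\mu_i-s)\,e_i^*\big)=(\mu_i-s)\,R(s),\qquad R(s):=W[\mu_1,\dots,\mu_n,s],
\]
a polynomial in $s$, so that $P(\lambda)=P(\lambda-k\,e_i^*)$ if and only if $R(\mu_i-k)=0$, for $k\ge1$ and $1\le i\le n$. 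Conversely any monic polynomial $R$ is realized: take $W(t)=P_{\mathrm{int}}(t)+\prod_{i=1}^n(t-\mu_i)R(t)$ with $\deg P_{\mathrm{int}}<n$ chosen so that $t^{n-1}\mid W$, and set $w=W/t^{n-1}$.

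Given the prescribed $\lambda$ and $\nu$, let $I=\{\,i\le n-1:\nu_i<\lambda_i-\lambda_{i+1}\,\}$ and choose $\df$ corresponding to
\[
R(s)=\prod_{z\in Z}(s-z),\qquad Z=\{\,\mu_i-\nu_i-1:i\in I\,\}\cup\{\,\mu_n-\nu_n-1\,\}.
\]
The hypotheses on $\nu$ guarantee $\mu_i-\nu_i-1\in(\mu_{i+1},\mu_i)$ for $i\in I$ and $\mu_n-\nu_n-1<\mu_n$, so the points of $Z$ are pairwise distinct, and every element of $Z$ that is $<\mu_i$ is $\le\mu_i-\nu_i-1$ when $i\in I$ and $<\mu_{i+1}$ when $i\notin I$. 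Now I would make three checks. First, $R(\mu_n-\nu_n-1)=0$ gives $P(\lambda)=P\big(\lambda-(0,\dots,0,\nu_n+1)\big)$, so $L(\lambda)$ is finite dimensional by Theorem~\ref{theorem:classification}(a). Second, the number $k_i$ of Theorem~\ref{theorem:classification} is the smallest $k\ge0$ with $\mu_i-k-1\in Z$: by the previous sentence this equals $\nu_i$ when $i\in I$, while for $i\notin I$ one has $\mu_i-k_i-1<\mu_{i+1}$, hence $k_i\ge\lambda_i-\lambda_{i+1}$. In either case $\min(k_i,\lambda_i-\lambda_{i+1})=\nu_i$, which is the $i$-th entry of the shape given by Theorem~\ref{theorem:classification}(b) for $1\le i\le n-1$.

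The last check — which I expect to be the main obstacle — is to identify the parameter ``$\nu_n$'' implicitly defined in the proof of Theorem~\ref{theorem:classification}(b), namely the least $l$ with $V_{\lambda-(l+1)e_n^*}\subseteq\overline{M}(\lambda)$, with the prescribed $\nu_n$. Here I would use that $U(\mathfrak n^-)_{k\,e_n^*}=\mathbb C\,x_n^{k}$, i.e.\ $\tau(k\,e_n^*)=1$; thus the $\gl_n$-highest weight vector $x_n^{l+1}v_\lambda$ spans the whole $(l+1)e_n^*$-weight space of $M(\lambda)$, and it lies in $\overline{M}(\lambda)=\ker S(\lambda)$ (Proposition~\ref{proposition:shap}) precisely when the $1\times1$ form $S_{(l+1)e_n^*}(\lambda)$ vanishes. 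In the determinant formula of Theorem~\ref{theorem:shapovalov} with $\nu=(l+1)e_n^*$, the exponent $\tau\big((l+1)e_n^*-k\alpha\big)$ vanishes for every $\alpha\in\Delta^+(\gl_n)$ and for every $\alpha=e_j^*$ with $j<n$ (the partial sums of the coordinates of $(l+1)e_n^*-k\alpha$ are not all $\ge0$, so it is not in $Q^+$), so $\det S_{(l+1)e_n^*}(\lambda)=\prod_{k=1}^{l+1}\big(P(\lambda)-P(\lambda-k\,e_n^*)\big)$ up to a nonzero scalar. This vanishes iff $\mu_n-k\in Z$ for some $1\le k\le l+1$; since $\mu_n-\nu_n-1$ is the only element of $Z$ lying below $\mu_n$, this happens iff $l\ge\nu_n$, so the least such $l$ is exactly the prescribed $\nu_n$. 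Theorem~\ref{theorem:classification}(b) then yields $L(\lambda)=\bigoplus_{0\le\lambda-\lambda'\le\nu}V_{\lambda'}$ together with the character $(*)$, finishing the proof. The routine parts are the realizability of $R$ (divisibility of $W$ by $t^{n-1}$) and the elementary positions of the points of $Z$; the substantive content is the reduction to $R$ in the first paragraph and the Shapovalov computation above.
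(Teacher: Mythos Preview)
Your argument is correct and takes a genuinely different route from the paper's. The paper proves existence non-constructively: it shows that the $n+1$ linear functionals $w\mapsto P_{\nu_1 1},\ldots,P_{\nu_n n},P_{\nu_l' l}$ on the parameter space $(w_0,w_1,\ldots)$ are linearly independent by computing an explicit $(n+1)\times(n+1)$ determinant, reducing it via column operations to a Vandermonde determinant in the quantities $k_j-\nu_j-1$; the hypothesis $\nu_i\le\lambda_i-\lambda_{i+1}$ is used only to check these are pairwise distinct. Hence the hyperplanes $P_{\nu_j j}=0$ can be intersected without forcing any $P_{\nu_l' l}=0$, and a suitable $w$ exists.

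Your approach is constructive: the divided-difference identity $P(\lambda)=W[\mu_1,\ldots,\mu_n]$ recasts the conditions $P(\lambda)=P(\lambda-k\,e_i^*)$ as the single polynomial equation $R(\mu_i-k)=0$, so prescribing the shape becomes prescribing the zero set of $R$, which you then place directly. This is cleaner and produces an explicit deformation, at the price of having to check that an arbitrary $R$ lifts back through the chain $R\rightsquigarrow W\rightsquigarrow w\rightsquigarrow f\rightsquigarrow\df$ (you handle this correctly; the free coefficient $p_{n-1}$ of $P_{\mathrm{int}}$ absorbs the constant ambiguity in $f$, which in any case only shifts $w_0$ and does not affect $P(\lambda)-P(\lambda-\mu)$). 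Your treatment of the $n$-th coordinate is also more careful than the paper's: the paper simply appeals to Theorem~\ref{theorem:classification} after arranging $P_{\nu_n n}=0$ and $P_{\nu_n' n}\neq 0$, whereas you make the identification of the ``minimal $l$ with $(0,\ldots,0,-l-1)\in S$'' explicit by computing $\det S_{(l+1)e_n^*}(\lambda)$ from Theorem~\ref{theorem:shapovalov} and using $\tau(k\,e_n^*)=1$. Both arguments ultimately rest on the same Shapovalov input; yours makes that dependence transparent.
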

\begin{proof}
Let $\lambda'=\lambda+\rho$. We can write
$\lambda'_i=\lambda'_n+k_i$ for $k_1>k_2>k_3>\cdots>k_{n-1}>k_n=0$
(we have strict inequalities because of the shift by $\rho$). Recall
that $P(\lambda)=\sum w_m H_{m}(\lambda')$ for $w_i$ defined as in
Theorem \ref{theorem:casimirgln}. Let
$\mu_i=(0,\ldots,\nu_i+1,0,\ldots,0)$. We will find $w_i$ such that
$P(\lambda')-P(\lambda'-\mu_i)=0$, while for all $0<\mu_i'<\mu_i$,
$P(\lambda')-P(\lambda'-\mu_i') \neq 0$. This implies that
there are embeddings of $M(\lambda'-\mu_i)$ into $M(\lambda')$ with an irreducible quotient
$L(\lambda')=M(\lambda')/\sum_i M(\lambda'-\mu_i)$, due to Theorem \ref{theorem:classification}.

Define $P_{mj}=P(\lambda')-P(\lambda'-\mu)$ for
$\mu=(0,\ldots,m+1,0,\ldots,0)$ with the $m+1$ at the $j$-th location.
We must prove that there exist $w$ such that
$P_{\nu_11}=\cdots=P_{\nu_nn}=0$ and
$P_{\nu_1'1},\ldots,P_{\nu_n'n}\neq 0$ for all $0<\nu_i'<\nu_i$. We
can write $P_{mj}=\sum_{i> 0} w_i R_{mj}^i$, where
\[
R_{mj}^N=\sum_{i_1+\ldots+i_n=N} (\lambda'_n+k_1)^{i_1}\cdots (\lambda'_n+k_{j-1})^{i_{j-1}} ((\lambda'_n+k_j)^{i_j}-(\lambda'_n+k_j-m-1)^{i_j})(\lambda'_n+k_{j+1})^{i_{j+1}}\cdots (\lambda'_n+k_n)^{i_n}.
\]
Note that the condition $P_{kj}=0$ determines a hyperplane $\Pi_{kj}$ in the space $(w_0,w_1,\ldots)$
($\Pi_{kj}$ might in fact be the entire space, but the following argument would be unaffected).
Hence, the intersection $\bigcap \Pi_{\nu_j j}$ belongs to the union $\bigcup_{j,0<\nu_j'<\nu_j} \Pi_{\nu_j',j}$
if and only if it belongs to some $\Pi_{\nu_j',j}$.
Thus, it suffices to show that $\{P_{\nu_11},\ldots,P_{\nu_nn},P_{\nu_l'l} \}$
are linearly independent as functions of $w_i$ for all $1 \leq l \leq n$ and $0 <\nu_l'<\nu_l$.
%there exist deformations $w$ that belong to all the hyperplanes $\Pi_{\nu_jj}$ but not to $\Pi_{\nu_l'l}$
This condition of linear independence is satisfied if
\[
\det
\left(
\begin{matrix}
    R_{\nu_1 1}^{1} & R_{\nu_1 1}^{2} & \cdots & R_{\nu_1 1}^{n+1}\\
    R_{\nu_2 2}^{1} & R_{\nu_2 2}^{2} & \cdots & R_{\nu_2 2}^{n+1}\\
    \vdots & \vdots & \ddots & \vdots
    \\
    R_{\nu_n n}^{1} & R_{\nu_n n}^{2} & \cdots & R_{\nu_n n}^{n+1}\\
    R_{\nu_l' l}^{1} & R_{\nu_l' l}^{2} & \cdots & R_{\nu_l' l}^{n+1}\\
\end{matrix}
\right)
\neq 0.
\]

Now we shall prove that using column transformations, we can reduce the
above matrix to its evaluation at $\lambda'_n=0$.
We proceed by induction on the column number.
The elements of the first column, $R_{mj}^1$, are of degree zero with respect to $\lambda'_n$, so $R_{mj}^1=R_{mj}^1(0)$.
Suppose that using column transformations, all columns before column $p$ are reduced to their constant terms.
Now, we note that
\begin{align*}
\frac{\partial R_{mj}^p(\lambda_n')}{\partial \lambda'_n}  &= \frac{\partial}{\partial \lambda'_n} \left(
\sum_{i_1+\ldots+i_n=p} (\lambda_n'+k_1)^{i_1}\cdots ((\lambda_n'+k_j)^{i_j}-(\lambda_n'+k_j-m-1)^{i_j})\cdots \lambda_n'^{i_n}
\right) \\
%&=&
%\sum_{i_1+\ldots+i_n=p} (i_1+1)(\lambda'_n+k_1)^{i_1}\cdots  ((\lambda'_n+k_j)^{i_j}-(\lambda'_n+k_j-m-1)^{i_j})\cdots \lambda'_n^{i_n}\\
%&&+
%(i_{j-1}+1)(\lambda'_n+k_1)^{i_1}\cdots  ((\lambda'_n+k_j)^{i_j}-(\lambda'_n+k_j-m-1)^{i_j})\cdots \lambda'_n^{i_n}
%+
%\ldots \\
&=
\sum_{i_1+\ldots+i_n=p-1} (i_1+i_2+\ldots+i_n+n)(\lambda_n'+k_1)^{i_1}\cdots ((\lambda_n'+k_j)^{i_j}-(\lambda_n'+k_j-m-1)^{i_j})\cdots \lambda_n'^{i_n}
\\
&=
(p+n-1) R_{mj}^{p-1}(\lambda_n').
\end{align*}
Thus, we see that $R_{mj}^p-R_{mj}^p(0)$ is a linear combination of $R_{mj}^{p-i}(0)$, the entries of the other columns:
\[R_{mj}^p(\lambda_n')=\sum_i \frac{1}{i!}\lambda_n'^i \left. \frac{\partial^iR_{mj}^p}{\partial \lambda_n'^i}  \right|_{\lambda'_n=0}
=
\sum_i \frac{(p+n-1)\cdots(p+n-i)}{i!}\lambda_n'^i R_{mj}^{p-i}(0)
=
\sum_i {{p+n-1} \choose i} R_{mj}^{p-i}(0) \lambda_n'^i
.\]
By selecting pivots of ${{p+n-1} \choose i} \lambda_n'^i$, we can eliminate every term except $R_{mj}^p(0)$.
By repeating this step, we reduce the matrix to its evaluation at $\lambda'_n=0$:
\[
\det
\left(
\begin{matrix}
    R_{\nu_1 1}^{1}(\lambda_n') & R_{\nu_1 1}^{2}(\lambda_n') & \cdots & R_{\nu_1 1}^{n+1}(\lambda_n')\\
    R_{\nu_2 2}^{1}(\lambda_n') & R_{\nu_2 2}^{2}(\lambda_n') & \cdots & R_{\nu_2 2}^{n+1}(\lambda_n')\\
    \vdots & \vdots & \ddots & \vdots
    \\
    R_{\nu_n n}^{1}(\lambda_n') & R_{\nu_n n}^{2}(\lambda_n') & \cdots & R_{\nu_n n}^{n+1}(\lambda_n')\\
    R_{\nu_l' l}^{1}(\lambda_n') & R_{\nu_l' l}^{2}(\lambda_n') & \cdots & R_{\nu_l' l}^{n+1}(\lambda_n')\\
\end{matrix}
\right)
=
\det
\left(
\begin{matrix}
    R_{\nu_1 1}^{1}(0) & R_{\nu_1 1}^{2}(0) & \cdots & R_{\nu_1 1}^{n+1}(0)\\
    R_{\nu_2 2}^{1}(0) & R_{\nu_2 2}^{2}(0) & \cdots & R_{\nu_2 2}^{n+1}(0)\\
    \vdots & \vdots & \ddots & \vdots
    \\
    R_{\nu_n n}^{1}(0) & R_{\nu_n n}^{2}(0) & \cdots & R_{\nu_n n}^{n+1}(0)\\
    R_{\nu_l' l}^{1}(0) & R_{\nu_l' l}^{2}(0) & \cdots & R_{\nu_l' l}^{n+1}(0)\\
\end{matrix}
\right).
\]

Let us now rewrite $R_{mj}^N(0)$:
\begin{align*}
R_{mj}^N(0)&=
\sum_{i_1+\ldots+i_n=N} k_1^{i_1}\cdots k_{j-1}^{i_{j-1}} (k_j^{i_j}-(k_j-m-1)^{i_j})k_{j+1}^{i_{j+1}}\cdots k_n^{i_n}
=
\sum_{i=0}^{N-1} H_{N-i-1}' \left(k_j^{i+1}-(k_j-m-1)^{i+1}\right)\\
&=
\sum_{i=0}^{N-1} H_{N-i-1} \left(k_j^{i+1}-(k_j-m-1)^{i+1}-k_j(k_j^i-(k_j-m-1)^{i}) \right)
=\sum_{i=0}^{N-1} H_{N-i-1} \left((m+1)(k_j-m-1)^i\right)
\end{align*}
where $H_{N-i}=\sum_{i_1+\ldots+i_n=N-i} k_1^{i_1} \cdots k_n^{i_n}$ and $H_{N-i}'=\sum_{i_1+\ldots+\widehat{i_j}+\ldots+i_n=N-i} k_1^{i_1} \cdots \widehat{k_j^{i_j}} \cdots k_n^{i_n}$.
The third equality is because $H_{N-i}'=H_{N-i}-k_j H_{N-i-1}$.
It is easy to see that the above determinant can be reduced further to
\[
\det
\left(
\begin{matrix}
    \nu_1+1 & (\nu_1+1)(k_1-\nu_1-1) & \cdots & (\nu_1+1)(k_1-\nu_1-1)^{n}\\
    \nu_2+1 & (\nu_2+1)(k_2-\nu_2-1) & \cdots & (\nu_2+1)(k_2-\nu_2-1)^{n}\\
    \vdots & \vdots & \ddots & \cdots
    \\
    \nu_n+1 & (\nu_n+1)(k_n-\nu_n-1) & \cdots & (\nu_n+1)(k_n-\nu_n-1)^{n}\\
    \nu_l'+1 & (\nu_l'+1)(k_l-\nu_l'-1) & \cdots & (\nu_l'+1)(k_l-\nu_l'-1)^{n}\\
\end{matrix}
\right)
=T\cdot\det
\left(
\begin{matrix}
    1 & k_1-\nu_1-1 & \cdots & (k_1-\nu_1-1)^{n}\\
    1 & k_2-\nu_2-1 & \cdots & (k_2-\nu_2-1)^{n}\\
    \vdots & \vdots & \ddots & \cdots
    \\
    1 & k_n-\nu_n-1 & \cdots & (k_n-\nu_n-1)^{n}\\
    1 & k_l-\nu_l'-1 & \cdots & (k_l-\nu_l'-1)^{n}\\
\end{matrix}
\right),
\]
where $T=(\nu_1+1)(\nu_2+1) \cdots (\nu_n+1) (\nu_l'+1)$ and the determinant is 
$\prod_{i=1}^n (k_l-k_i+\nu_i-\nu_l')\prod_{1\leq i < j \leq n}(k_j-k_i+\nu_i-\nu_j)$ by the Vandermonde determinant formula.
 Now, recalling the conditions $0\leq \nu_i\leq \lambda_i-\lambda_{i+1}=k_i-k_{i+1}-1$ we get $k_j-k_i+\nu_i-\nu_j<0$ for any $i<j$ and so
$\prod_{1\leq i < j \leq n}(k_j-k_i+\nu_i-\nu_j)$ is nonzero.
Similarly, we get $\prod_{i=1}^n (k_l-k_i+\nu_i-\nu_l')\ne 0$.
Hence, the determinant is nonzero, and so $\{P_{\nu_1,1},\ldots,
P_{\nu_n,n}, P_{\nu_l',l}\}$ are linearly independent as desired.
\end{proof}

$\ $

\section{Poisson Infinitesimal Cherednik Algebras}
\label{section:poisson}
Now we will study infinitesimal Cherednik algebras by using their Poisson analogues. 
The Poisson infinitesimal Cherednik algebras
are as natural as $H_\df(\gl_n)$,
and their theory goes along the same lines with some simplifications.
Although these algebras have not been defined before in the literature,
the authors of \cite{EGG} were aware of them, 
and technical calculations with these algebras are similar to those made in \cite{T}.
This approach provides another proof of Theorem \ref{theorem:firstcentralelementtheorem}.

Let $\dfe$ be a deformation parameter, $\dfe: V\times V^* \to S(\gl_n)$.
%Define a filtration on $H_\dfe(\gl_n)$ 
%by $\deg x_i = m$, $\deg y_i=1$, and $\deg g=1$ for $g\in \gl_n$.
The Poisson infinitesimal Cherednik algebra $H_\dfe'(\gl_n)$ 
is defined to be the algebra
$S\gl_n \otimes S(V \oplus V^*)$
with a bracket defined on the generators by:
\begin{align*}
\{a,b\} &=[a,b] \text{ for } a,b \in \gl_n,\\
\{g,v\} &=g(v)  \text{ for } g\in \gl_n, v\in V\oplus V^*,\\
\{y,y'\}&=\{x,x'\} = 0 \text{ for } y,y' \in V, x,x' \in V^*,\\
\{y,x\} &=\dfe(y,x) \text{ for } y\in V, x \in V^*.
\end{align*}
This bracket extends to a Poisson bracket on $H_\dfe'(\gl_n)$  if and only if the Jacobi identity $\{\{x,y\},z\}+\{\{y,z\},x\}+\{\{z,x\},y\}=0$ holds
for any $x,y,z \in \gl_n \ltimes (V \oplus V^*)$.
As can be verified by computations analogous to \cite{EGG}, Theorem 4.2, the Jacobi identiy holds iff
$\dfe=\sum_{j=0}^k \dfe_j \mathfrak{r}_j$ where $\dfe_j \in \mathbb{C}$ and
$\mathfrak{r}_j$ is the coefficient of $\tau^j$ in the expansion of
$
(x,(1-\tau A)^{-1} y)\det (1-\tau A)^{-1}
$.
Actually, we can consider the infinitesimal Cherednik algebras of $\gl_n$ as quantizations of $H_\dfe'(\gl_n)$.
\begin{remark}[Due to Pavel Etingof]
\label{remark:glnpoisson}
Note that 
\[
\{y_i,x_j\}=\sum \zeta_l \mathfrak{r}_l(y_i, x_j) = \sum \zeta_l\frac{\partial \Tr(S^{l+1}A)}{\partial e_{ji}};
\]
this follows from
\[
\frac{\partial}{\partial B}(\det(1-\tau A)^{-1}) =\frac{\Tr(\tau B(1-\tau A)^{-1})}{\det(1-\tau A)}
\]
when $B=y_i \otimes x_j$. % (this identity was used in Section 4.2 of [EGG]).
In fact, if $\{y_i, x_j\}=F_{ji}(A)$, the Jacobi identity implies that
$F_{ij}(A)=\frac{\partial F}{\partial e_{ij}}$ for some $GL(n)$ invariant function $F$,
and that $\Lambda^2 D_A(F)=0$,
where $D_A$ is the matrix with $(D_A)_{_{ij}}=\frac{\partial}{\partial e_{ij}}$.
One can then show that the only $GL(n)$ invariant functions $F$ satisfying this partial differential equation are
linear combinations of $\Tr(S^l A)$.
\end{remark}
%Indeed, define a homomorphism
%$a\to \bar{a}$ from $H_\dfe'(\gl_n)$ to 
%$\gr H_{\bar{\dfe}}(\gl_n)$ (with $\bar{\dfe}(z)=h^{-1}\dfe(hz)$) by setting
%$\bar{g}=hg$, $\bar{y}=hy$, and $\bar{x}=hx$ for $g\in \mathfrak{g}$,
%$y\in V$ and $x \in V^*$, and $h \in \mathbb{C}$.
%Then $\overline{\{a,b \}} = \frac{1}{h}[\bar{a},\bar{b}] (\mathrm{mod}\, h^2)$
%for $a,b \in H_\dfe'(\gl_n)$.

Our main goal is to compute explicitly the Poisson center of the algebra $H_\dfe'(\gl_n)$.
As before, we set $\mathcal{Q}_k$ to be
the coefficient of $(-t)^k$ in the expansion of $\det(1-tA)$,
$\pt_k=\sum_{i=1}^n x_i \{\mathcal{Q}_k,y_i\}$,
and $\dfe(z)=\dfe_0+\dfe_1 z + \dfe_2 z^2 + \cdots$.
%, but now we view $\beta_k$ and $\pt_k$ as elements of $H_\dfe'$.
%It follows from \cite{T} that $\pt_k \in \zpois(H_0'(\gl_n))$.
\begin{theorem}
\label{theorem:poissoncentergln}
The Poisson center $\zpois(H_\dfe'(\gl_n))=\mathbb{C}[\pt_1+c_1,\pt_2+c_2,\ldots,\pt_n+c_n]$,
where $(-1)^i c_i$ is the coefficient of $t^i$ in the series
\[
c(t)=\Res_{z=0} \dfe(z^{-1}) \frac{\det(1-tA)}{\det(1-zA)} \frac{1}{1-t^{-1} z} \frac{dz}{z}.
\]
\end{theorem}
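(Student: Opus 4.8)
The plan is to mimic the structure used for $H_\df(\gl_n)$ in Section \ref{section:gln}, working throughout in the commutative Poisson world, which should make the bookkeeping lighter. First I would check that each $\pt_k + c_k$ is Poisson-central, and then invoke a graded/filtration argument together with the structure of $\zpois(H_0'(\gl_n))$ to conclude that these elements in fact generate the whole Poisson center. For the centrality, the only nontrivial brackets to kill are $\{y, \pt_k + c_k\}$ for $y \in V$ (the bracket with $V^*$ follows by applying the anti-involution $\sigma$, and the bracket with $\gl_n$ is automatic since $\pt_k$ is built from $\gl_n$-invariant data and $c_k \in S(\gl_n)^{GL(n)}$). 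Expanding $\{y,\pt_k\} = \sum_i \{y,x_i\}\{\mathcal Q_k, y_i\} + \sum_i x_i \{y, \{\mathcal Q_k, y_i\}\}$ and using Remark \ref{remark:glnpoisson} to rewrite $\{y_i, x_j\}$ as a derivative of $\sum_l \dfe_l \Tr(S^{l+1}A)$, I expect to reduce the computation to a residue identity: the generating-series manipulation
\[
\frac{\partial}{\partial B}\bigl(\det(1-\tau A)^{-1}\bigr) = \frac{\Tr(\tau B (1-\tau A)^{-1})}{\det(1-\tau A)}
\]
plus the analogous formula for $\det(1-tA)$ should let me match $\sum_i \{y,x_i\}\{\mathcal Q_k,y_i\}$ against $\{y, c_k\}$ term by term in $t$, exactly as in the proof of Theorem \ref{theorem:firstcentralelementtheorem}.

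Concretely, I would assemble the generating series $\pt(t) = \sum_k \pt_k t^k$ and $c(t)$ as in the statement, and prove the single identity $\{y, \pt(t) + c(t)\} = 0$ as an identity of power series in $t$ with coefficients in $H_\dfe'(\gl_n)$. The key computational step is to recognize the sum $\sum_i x_i \{\mathcal Q_k, y_i\}$, repackaged over $k$, as a residue $\Res_{z=0} (x, (1-zA)^{-1} y')\,\frac{\det(1-tA)}{\det(1-zA)}\cdots$-type expression — so that the $\{y, \cdot\}$-bracket produces two contributions (one from the $\{y,x_i\}$ factor, one from the adjoint action of $y$ on the $(1-zA)^{-1}$ piece), which cancel against the two contributions coming from differentiating $c(t)$. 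The diagonalizable-matrix density trick used elsewhere in the paper (reduce to $A = \mathrm{diag}(\lambda_1,\dots,\lambda_n)$, where $\det(1-zA)^{-1} = \prod_i (1-z\lambda_i)^{-1}$) will make the residue bookkeeping explicit and is probably the cleanest route to verifying the cancellation.

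Once centrality is established, I would argue surjectivity of $\mathbb{C}[\pt_k+c_k] \hookrightarrow \zpois(H_\dfe'(\gl_n))$ by a degeneration argument: at $\dfe = 0$ one has $c = 0$ and, by the Poisson analogue of \cite{T}, the Poisson center of $H_0'(\gl_n) = S(\gl_n) \otimes S(V\oplus V^*)$ with the undeformed bracket is the polynomial algebra $\mathbb{C}[\pt_1,\dots,\pt_n]$; since the $\pt_k + c_k$ have the same top-degree symbols $\pt_k$ with respect to the filtration $\deg(x)=\deg(y)=1$, $\deg(\gl_n)=0$, any central element of $H_\dfe'(\gl_n)$ has leading term a polynomial in the $\pt_k$, and one peels off terms inductively. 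The algebraic independence of $\{\pt_k\}$ is inherited from the $\dfe=0$ case.

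The main obstacle I anticipate is the residue/cancellation identity at the heart of the centrality proof — getting the precise shape of $c(t)$, namely the appearance of the extra factor $\frac{1}{1-t^{-1}z}$ and the overall $\det(1-tA)/\det(1-zA)$, to come out correctly requires carefully tracking which $\mathcal Q_k$ pairs with which power of $t$ and handling the ``diagonal'' terms $\{y_i, x_i\}$ versus off-diagonal ones. A secondary subtlety is making the density-of-diagonalizable-matrices reduction rigorous when the expressions involved are residues rather than polynomials, but since the final identity is polynomial in the entries of $A$ after clearing denominators, this should be routine.
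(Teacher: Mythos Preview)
Your overall plan matches the paper's: establish centrality of each $\pt_k + c_k$ (reducing to $\{\pt_k + c_k, y\} = 0$ via $\gl_n$-invariance and the anti-involution $\sigma$), then use the known Poisson center at $\dfe = 0$ together with the filtration to obtain the full statement. The degeneration/surjectivity argument is fine.

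The gap is in your account of the centrality cancellation. In the expansion
\[
\{y, \pt_k\} = \sum_i \{y, x_i\}\{\mathcal{Q}_k, y_i\} + \sum_i x_i\, \{y, \{\mathcal{Q}_k, y_i\}\},
\]
the second sum carries an explicit $x_i$ factor and therefore lies in $V^*\cdot\bigl(S(\gl_n)\otimes V\bigr)\subset H_\dfe'(\gl_n)$, whereas $\{y, c_k\} \in S(\gl_n)\otimes V$ since $c_k \in S(\gl_n)$. These sit in different components with respect to the grading by $V^*$-degree, so no cancellation between them is possible: the second sum must vanish \emph{on its own}. This is precisely the identity $\{\{\mathcal{Q}_k, y_i\}, y\} = 0$, which is Lemma~2.1 of \cite{T} (a short determinant calculation). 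The paper invokes it at the outset, leaving only $\sum_i \{y, x_i\}\{\mathcal{Q}_k, y_i\}$ to match against $-\{y, c_k\}$; after that, summing against $(-t)^k$, restricting to diagonal $A$ by $GL(n)$-invariance, and checking the resulting residue identity is exactly what the paper does and what you anticipate. So your ``two contributions from $\{y,\pt_k\}$ cancel against two contributions from differentiating $c(t)$'' picture is not right: one contribution from $\{y,\pt_k\}$ vanishes identically, and the surviving one equals $-\{y,c_k\}$. (The latter does split into two pieces when you differentiate $\det(1-tA)/\det(1-zA)$ with respect to a diagonal entry, but that is an internal step of the final verification, not something that pairs off with the $x_i$-term.)
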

\begin{proof}
First, we claim that $\zpois(H_0'(\gl_n))=\mathbb{C}[\pt_1,\ldots,\pt_n]$.
The inclusion 
$\mathbb{C}[\pt_1,\ldots,\pt_n] \subseteq \zpois(H_0'(\gl_n))$ 
is straightforward,
%follows from
%the fact that $t_k \in H_0(\gl_n)$ for all $1 \leq k \leq n$,
while the reverse inclusion follows from 
the structure of the coadjoint action of the Lie group corresponding to 
$\gl_n \ltimes (V \oplus V^*)$ (as in the proof of \cite{T}, Theorem 2).

We prove that the Poisson center of $H_0'(\gl_n)$
can be lifted to the Poisson center of $H_\dfe'(\gl_n)$
by verifying that $\pt_i+c_i$ are indeed Poisson central.
Since $\pt_k \in \zpois(H_0(\gl_n))$ and $c_k \in \zpois(S(\mathfrak{gl_n}))$, 
$\pt_k+c_k$ 
Poisson-commutes with elements of $S(\gl_n)$.
We can define an anti-involution on $H_\dfe'(\gl_n)$ 
that acts on basis elements by taking $e_{ij}$ to $e_{ji}$ and
$y_i$ to $x_i$.
By using the arguments explained in the proof of Theorem 2 in \cite{T}, we can show that $\pt_k$
is fixed by this anti-involution, 
while $c_k$ is also fixed since it lies in $\zpois(S(\mathfrak{gl_n}))$.
Applying this anti-involution,
we see that 
%if $\pt_k+c_k$ commutes with elements of $V$, then $\pt_k+c_k$ also commutes with elements of $V^*$. Thus, 
it suffices to show that $c_k$ satisfies $\{\pt_k+c_k,y_l\}=0$ for basis elements $y_l\in V$.

%We wish to prove that the Poisson center of $H_0'(\gl_n)$
%can be lifted to the Poisson center of $H_\dfe'(\gl_n)$, with $\pt_k$ being lifted to $\pt_k+c_k$.
%%In \cite{T}, this was done using methods of homological algebra for noncommutative
%%infinitesimal Cherednik algebras, but the proof did not yield formulas for $c_k$.
%We will take a more direct approach by verifying the above formula for $c_k$.

First, notice that if $g \in S(\gl_n)$, 
then $\{g,y_l\}=\sum_{i,j=1}^{n} \frac{\partial g}{\partial e_{ij}}\{e_{ij},y_l\}$,
and together with the equation
$\{\{\mathcal{Q}_k,y_i\}, y_l\}=0$ 
(see the proof of Lemma 2.1 in \cite{T}),
we get
\[
\{\pt_k,y_l\}=\left\{\sum_{i=1}^n x_i \{\mathcal{Q}_k,y_i\}, y_l \right\}=\sum_{i=1}^n \{x_i,y_l\}\{\mathcal{Q}_k,y_i\}=-\sum_{i=1}^n\left(\Res_{z=0} \dfe(z^{-1}) \frac{\tr (x_i(1-zA)^{-1}y_l)}{z\det(1-zA)} dz \right)\{ \mathcal{Q}_k, y_i\}.
\]
Thus, we have
\[
\{\pt_k+c_k,y_l\}=\sum_{i,j=1}^{n} \frac{\partial c_k}{\partial e_{ij}}\{e_{ij},y_l\}-\sum_{i=1}^n\left(\Res_{z=0} \dfe(z^{-1}) \frac{\tr (x_i(1-zA)^{-1}y_l)}{z\det(1-zA)} dz \right)\{ \mathcal{Q}_k, y_i\}.
\]
Hence, $\{\pt_k+c_k,y_l\}=0$ is equivalent to the system of partial differential equations:
\[
\sum_{i,j=1}^{n} \frac{\partial c_k}{\partial e_{ij}}\{e_{ij},y_l\}=\sum_{i=1}^n\left(\Res_{z=0} \dfe(z^{-1}) \frac{\tr (x_i(1-zA)^{-1}y_l)}{z\det(1-zA)} dz \right)\{ \mathcal{Q}_k, y_i\}.
\]
Multiplying both sides by $(-t)^k$ and summing over $k=1,\ldots,n,$ we obtain an equivalent single equation
\[
\sum_{i,j=1}^{n} \frac{\partial c(t)}{\partial e_{ij}}\{e_{ij},y_l\}=\sum_{i=1}^n\left(\Res_{z=0} \dfe(z^{-1}) \frac{\tr (x_i(1-zA)^{-1}y_l)}{z\det(1-zA)} dz \right)\{\det(1-tA), y_i\}.
\]

Since all terms above are $GL(n)$ invariant and diagonalizable matrices are dense in $\gl_n$,
we can set $A=\mathrm{diag}(a_1, \ldots, a_n)$:
\begin{align*}
\frac{\partial c(t)}{\partial a_l} y_l&=\left(\Res_{z=0}
\frac{\dfe(z^{-1})}{z(1-z a_l)\det(1-zA)}dz \right) \{\det(1-tA), y_l\}\\
&=
\left(\Res_{z=0}
\frac{\dfe(z^{-1})}{z(1-z a_l)\det(1-zA)}dz \right)
 \frac{\partial \det(1-tA)}{\partial a_l} y_l
 \\
&=
-\left(\Res_{z=0}
\frac{\dfe(z^{-1})}{z(1-z a_l)\det(1-zA)}dz \right) 
\frac{t\det(1-tA)}{1-t a_l} y_l,
\end{align*}
and it is easy to see that $c(t)$ satisfies the above equation.
\end{proof}
\begin{example}
\label{example:l1}
In particular, $c_1=\sum_{i=0}^k \dfe_i \Tr S^{i+1} A$.
\end{example}

\begin{remark}
Another way of writing the formula for $c_k$ is
\[
c_k= \Res_{z=0} \dfe(z^{-1}) G_k(z) \frac{dz}{z^2},
\]
where $G_k(z)=\sum z^m y_{m,k}(A)$ and $y_{m,k}(A)=\chi\underbrace{(m,1,\ldots,1)}_{k}$,
the character of an irreducible $\gl_n$ module corresponding to a hook Young diagram\footnote{This formula follows from the fact that in the Grothendieck ring of finite dimensional $\gl_n$ representations, $[\bigwedge^k V \otimes S^m V]-[\bigwedge^{k+1} V \otimes S^{m-1} V]+\cdots+(-1)^m[\bigwedge^{k+m} V]=[V_{(m+1,1,\ldots,1)}]$ due to Pieri's formula.}.
This provides a better insight for the quantization construction.
\end{remark}
\begin{remark}
We expect that for any $a_1, \ldots, a_n \in \mathbb{C}$, the induced symplectic structure on $\mathrm{Spec}(S(\gl_n) \otimes S(V \oplus V^*)/(\tau_1+c_1-a_1, \ldots, \tau_n+c_n-a_n))$ has only finitely many symplectic leaves.
\end{remark}

$\ $

\section{Passing from Commutative to Noncommutative Algebras}

Note that $\{g, y\} \in S(\gl_n) \otimes V$ for $g \in S(\gl_n)$ and $y \in V$;
we can thus identify $\{g,y\}=\sum_{i=1}^n h_i \otimes y_i \in H_\dfe'(\gl_n)$
with the element $\sum_{i=1}^n \mathrm{Sym}(h_i) y_i \in H_\dfe(\gl_n)$.

\begin{lemma}
\label{lemma:com2noncom}
\[
[\Tr S^{k+1} A, y] = \left\{\sum_{j=0}^{k}\frac{(-1)^j}{k+n+1} {k+n+1 \choose j+1}\Tr S^{k+1-j} A, y \right\}.
\]
\end{lemma}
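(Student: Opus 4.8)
The plan is the following. Since the identity involves only $U(\gl_n)\ltimes V\subseteq H_\dfe(\gl_n)$ and is insensitive to $\dfe$, I work in $U(\mathfrak g)$ for $\mathfrak g=\gl_n\ltimes V$. Here $\Tr S^{k+1}A$ is read as $\mathrm{Sym}(\Tr S^{k+1}A)\in Z(U(\gl_n))$, and the right-hand side is the element $\sum_i\mathrm{Sym}(\partial_{e_{i1}}g_{k+1})\,y_i$ attached by the identification preceding the lemma to the Poisson bracket $\{g_{k+1},y_1\}=\sum_i(\partial_{e_{i1}}g_{k+1})y_i$, where $g_{k+1}=\sum_{j=0}^{k}\tfrac{(-1)^j}{k+n+1}\binom{k+n+1}{j+1}\Tr S^{k+1-j}A\in S(\gl_n)$. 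Both sides lie in $U(\gl_n)\cdot V=\bigoplus_i U(\gl_n)y_i$; the left side is $\mathrm{ad}_{\gl_n}$-equivariant in $y$ because $\mathrm{Sym}(\Tr S^{k+1}A)$ is central, and the right side is equivariant because the $\Tr S^{m}A$ are Poisson-central in $S(\gl_n)$ and symmetrization is $\gl_n$-equivariant; so it suffices to treat $y=y_1$.

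I would then evaluate both sides on the induced modules $N_\mu:=U(\mathfrak g)\otimes_{U(\gl_n)}V_\mu\cong S(V)\otimes V_\mu$ ($V$ acting by multiplication on the $S(V)$-factor, $\gl_n$ diagonally), $\mu$ ranging over dominant weights. A short PBW computation gives $\big(\sum_i u_iy_i\big)(1\otimes w)=\sum_\ell y_\ell\otimes\widetilde u_\ell w$ with $\widetilde u_\ell=u_\ell$ modulo lower filtration order; since $U(\gl_n)$ is residually finite-dimensional, agreement of two such elements on all the $1\otimes V_\mu$ forces $\widetilde u_\ell$, hence $u_\ell$, to coincide. So it is enough to compare the images of $1\otimes w$, $w\in V_\mu$, in $V\otimes V_\mu$. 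Writing $\beta_m(\eta)$ for the scalar by which $\mathrm{Sym}(\Tr S^mA)$ acts on $V_\eta$ (its Harish--Chandra eigenvalue, a polynomial in $\eta$) and $\mathrm{pr}_j$ for the projector onto the summand $V_{\mu+\varepsilon_j}$ of the Pieri decomposition $V\otimes V_\mu=\bigoplus_j V_{\mu+\varepsilon_j}$, centrality gives
\[
\big[\mathrm{Sym}(\Tr S^{k+1}A),\,y_1\big](1\otimes w)=\sum_j\big(\beta_{k+1}(\mu+\varepsilon_j)-\beta_{k+1}(\mu)\big)\,\mathrm{pr}_j(y_1\otimes w).
\]
For the right side one checks that $\sum_i\mathrm{Sym}(\partial_{e_{i1}}g_{k+1})(y_i\otimes w)$ is the value at $y_1\otimes w$ of the operator $\sum_{i,\ell}\mathrm{Sym}(\partial_{e_{i\ell}}g_{k+1})\otimes E_{i\ell}$ on $V\otimes V_\mu$ (the $U(\gl_n)$-part acting through the module structure); resolving its Pieri components via $\sum_{i,\ell}\partial_{e_{i\ell}}\!\big(\det(1-tA)^{-1}\big)E_{i\ell}=t\,\det(1-tA)^{-1}(1-tA^{\mathrm T})^{-1}$ brings the right side to the form $\sum_j\delta_j(\mu)\,\mathrm{pr}_j(y_1\otimes w)$, with each $\delta_j$ an explicit linear combination of the $\beta_{k+1-i}$ carrying the stated binomial weights. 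Matching the two expressions reduces Lemma~\ref{lemma:com2noncom} to a polynomial identity among the $\beta_m$.

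To close it I would feed in the explicit form of $\beta_m$, which is essentially what the proof of Theorem~\ref{theorem:casimirgln} produces: since the $v$-averaged rank-one power $\int_{|v|=1}(v\otimes\bar v)^m\,dv$ is a scalar multiple of $\mathrm{Sym}(\Tr S^mA)$, one has $\mathrm{tr}_{V_{\lambda-\rho}}\!\big((v\otimes\bar v)^m\big)/\dim V_{\lambda-\rho}=\tfrac{m!(n-1)!}{(n+m-1)!}\beta_m(\lambda-\rho)$, and the formula for $T(\lambda,\theta)$ in that proof then yields $\sum_m\tfrac{\beta_m(\lambda-\rho)}{(n+m-1)!}\xi^m=\big(\tfrac{\xi/2}{\sinh(\xi/2)}\big)^{n-1}\sum_p\tfrac{H_p(\lambda)}{(p+n-1)!}\xi^p$, i.e. $\beta_m(\eta)=H_m(\eta+\rho)$ plus lower-order corrections read off from $\big(\tfrac{\xi/2}{\sinh(\xi/2)}\big)^{n-1}=1+O(\xi^2)$. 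Inserting this collapses the remaining identity to elementary binomial combinatorics. The mechanism is transparent already for $\gl_1$, where $\mathrm{Sym}(\Tr S^{k+1}A)=e^{k+1}$: from $y\,e^{r}=(e-1)^ry$ one telescopes $[e^{k+1},y]=\big(\sum_{j=0}^{k}e^j(e-1)^{k-j}\big)y=\big(e^{k+1}-(e-1)^{k+1}\big)y$, the right side is $g_{k+1}'(e)\,y$, and the two match by the identity $\binom{k+1}{j+1}=\tfrac{k+1-j}{k+n+1}\binom{k+n+1}{j+1}$ (valid since $n=1$). I expect the main obstacle to be exactly this endgame for general $n$: the Pieri-component bookkeeping of the right side and the concluding combinatorial identity, in which, unlike for $n=1$, the lower-order corrections to $\beta_m$ must be carried along; the reductions and the left-side computation above are routine.
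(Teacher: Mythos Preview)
Your approach is genuinely different from the paper's. The paper gives a direct combinatorial argument entirely inside $U(\gl_n\ltimes V)$: it expands $\Tr S^{k+1}A$ into monomials $e_{1,i_1}\cdots e_{n,i_{k+1}}$, symmetrizes, and tracks the residual terms that arise when sliding $y_1$ through a symmetrized monomial to the right. These residuals organize into contributions of $\{\Tr S^{k+1-N}A,\,y_1\}$ with a coefficient $C_N$ that is then computed by a counting argument, the key step being the generating-function identity $\sum s_1'!\cdots s_n'!/(s_1!\cdots s_n!)=(k+n)\cdots(k+n-N+1)$. No representation theory enters at all.

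Your representation-theoretic framing is sound in outline, and the reductions are correct: faithfulness of the $N_\mu$ on $1\otimes V_\mu$ (via rewriting as $\sum_\ell y_\ell\tilde u_\ell$) holds, both $\Psi_L$ and $\Psi_R$ are $\gl_n$-intertwiners on $V\otimes V_\mu$, and the left-side Pieri eigenvalues are indeed $\beta_{k+1}(\mu+\varepsilon_j)-\beta_{k+1}(\mu)$. But there is a real gap exactly where you flag it. Your claim that the right side resolves as $\sum_j\delta_j(\mu)\,\mathrm{pr}_j$ with each $\delta_j$ ``an explicit linear combination of the $\beta_{k+1-i}$'' is not justified. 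The identity $\sum_{i,\ell}\partial_{e_{i\ell}}\det(1-tA)^{-1}\,E_{i\ell}=t\det(1-tA)^{-1}(1-tA^{\mathrm T})^{-1}$ lives in $S(\gl_n)\otimes M_n$, whereas the operator you must analyze involves $\mathrm{Sym}(\partial_{e_{i1}}\Tr S^mA)$ acting diagonally on $V\otimes V_\mu$; symmetrizing these non-invariant derivatives does not produce anything expressible via the central eigenvalues $\beta_m$ in any evident way. Unwinding that interaction between symmetrization and the Poisson bracket is precisely the content of the lemma, so your route relocates the difficulty rather than dissolving it. The $n=1$ check succeeds only because there symmetrization is trivial and $\partial_e e^m$ is already central, so none of the hard part appears. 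A secondary point: invoking the formula for $\beta_m$ extracted from the proof of Theorem~\ref{theorem:casimirgln} is not circular for the lemma per se, but it would make the ``alternative proof'' of Theorem~\ref{theorem:firstcentralelementtheorem} in this section depend on Section~\ref{section:gln}, defeating its purpose as an independent derivation.
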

\begin{proof}
It is enough to consider the case $y=y_1$.
Recall that
$\Tr S^{k+1}(A)$ can be written as a sum of degree $k+1$ monomials of form $e_{1,i_1} \cdots e_{1,i_{s_1}} e_{2, i_{s_1+1}} \cdots e_{2, i_{s_1+s_2}} \cdots e_{n,i_{s_1+\cdots+s_n}}$ 
where $s_1+\cdots + s_n=k+1$ and the sequence $\{i_k\}$ is a permutation of the sequence of $s_1$ ones, $s_2$ twos, and so forth;
for conciseness, we will denote the above monomial by $e_{1,i_1} \cdots e_{n,i_{k+1}}$.
The only terms of $\Tr S^{k+1} A$ that contribute to $[\Tr S^{k+1} A,y_1]$ and to $\{\Tr S^{k+1} A, y_1\}$ 
have $s_1 \geq 1$.
Since to compute $[\Tr S^{k+1} A,y_1]$ we first symmetrize $\Tr S^{k+1}A$,
we will compute $[\mathrm{Sym}(e_{1,i_1} \cdots e_{n,i_{k+1}}),y_1]-\{\mathrm{Sym}(e_{1,i_1} \cdots e_{n,i_{k+1}}),y_1\}$.
For both the Lie bracket and the Poisson bracket, we use Leibniz's rule to compute the bracket,
but whereas in the Poisson case we can transfer the resulting elements of $V$ to the right since the Poisson algebra is commutative,
in the Lie case when we do so extra terms appear.

Consider a typical term that may appear after we use Leibniz's rule to compute $[\Tr S^{k+1} A,y_1]$:
\[
\cdots y_{j_0} \cdots e_{j_1 j_0} \cdots e_{j_2 j_1} \cdots e_{j_N j_{N-1}} \cdots
\] 
When we move $y_{j_0}$ to the right, we get, besides
$\cdots e_{j_1 j_0} \cdots e_{j_2 j_1} \cdots e_{j_N j_{N-1}} \cdots y_{j_0}$,
additional residual terms like 
$
-\cdots e_{j_2 j_1} \cdots e_{j_N j_{N-1}} \cdots y_{j_1}
$
and
$
\cdots e_{j_3 j_2} \cdots e_{j_N j_{N-1}} \cdots y_{j_2}
$,
up to
$
(-1)^N \cdots y_{j_N}
$.
Without loss of generality, we can consider only the last expression,
since the others will appear in the smaller chains 
\[
\cdots y_{j_0} \cdots e_{j_1 j_0} \cdots \widehat{e_{j_2j_1}} \cdots \widehat{e_{j_3j_2}}\cdots \widehat{e_{j_N j_{N-1}}}
\] 
and
\[
\cdots y_{j_0} \cdots e_{j_1 j_0} \cdots e_{j_2 j_1} \cdots \widehat{e_{j_3j_2}}\cdots \widehat{e_{j_N j_{N-1}}},
\] 
and so forth, with the same coefficients.
For notational convenience, we let $z_1$ denote the coefficient of $y_{j_N}$ in the residual term,
i.e., the term represented by the ellipsis: $(-1)^N \underbrace{\cdots}_{z_1} y_{j_N}$.
Then, $z_1 y_{j_N}$ is a term in the expression
$(-1)^N \{ z_1 e_{j_N 1}, y_{1}\}$,
which appears in $(-1)^N \{\Tr S^{k+1-N}A,y_1\}$.
Thus, we can write
\[
[\Tr S^{k+1} A, y_1]=\left\{\sum_{N=0}^{k} (-1)^N C_N \Tr S^{k+1-N}A,y_1 \right\}
\]
for some coefficients $C_N$.

Next, we compute $C_N$.
We first count how many times $z_1 y_{j_N}$ appears in $\{\Tr S^{k+1-N}A,y_1\}$.
Notice that since $z_1$ is the product of $k-N$ $e_{jl}$'s, 
we can insert $e_{j_N 1}$ in $k-N+1$ places to obtain $z_2$ such that $\{z_2,y_1\}$ contains $z_1 y_{j_N}$.

Now we compute the coefficient of $z_2$ in $\Tr S^{k+1-N} A$. 
As noted before,
$\Tr S^{k+1-N}(A)$ can be written as a sum of degree $k+1-N$ monomials of form $e_{1,i_1} \cdots e_{1,i_{s_1}} e_{2, i_{s_1+1}} \cdots e_{2, i_{s_1+s_2}} \cdots e_{n,i_{k+1-N}}$.
Any term that is a permutation of those $k+1-N$ unit matrices 
will appear in the symmetrization of $\Tr S^{k+1-N} A$.
We count the number of sequences $i_1, \ldots, i_{k+1-N}$
such that $z_2$ is the product
of the elements
$e_{1,i_1}, \ldots, e_{n,i_{k+1-N}}$ (in some order);
this tells us the multiplicity of $z_2$ in the symmetrization of $\Tr S^{k+1-N} A$.
Suppose
$z_2=e_{1,i_1} \cdots e_{n,i_{k+1-N}}$ for a certain sequence $i_1, \ldots, i_{k+1-N}$. 
Then,
$z_2=e_{1,i_1'} \cdots e_{n,i_{k+1-N}'}$ 
if and only if
$i_{s_1+\cdots+s_{j-1}+1}', \ldots, i_{s_1+\cdots+s_{j}}'$
is a permutation of
$i_{s_1+\cdots+s_{j-1}+1}, \ldots, i_{s_1+\cdots+s_j}$
for all $j$.
Thus, $z_2$ appears $s_1! s_2! \cdots s_n!$
times in $\Tr S^{k+1-N} A$.
Since each term has coefficient $\frac{1}{(k-N+1)!}$ in the symmetrization,
$z_2$ appears with coefficient 
\[
\frac{s_1! s_2! \cdots s_n!}{(k-N+1)!}
\]
in the symmetrization of $\Tr S^{k+1-N} A$.
In conjunction with the previous paragraph, we see that
$z_1 y_{j_N}$ appears
\[
\frac{s_1! s_2! \cdots s_n!}{(k-N+1)!} \times (k-N+1) = \frac{s_1! s_2! \cdots s_n!}{(k-N)!}
\]
times in $\{\Tr S^{k+1-N}A,y_1\}$.

It remains to calculate how many times $z_1 y_{j_N}$ appears in $[\Tr S^{k+1} A, y_1]$.
Recall that $z_1$ is obtained from a term like:
\[
\cdots e_{j_0 1} \cdots e_{j_1 j_0} \cdots e_{j_2 j_1} \cdots e_{j_N j_{N-1}} \cdots
\]
where the ordered union of the ellipsis equals $z_1$.
Thus, $z_1$ comes from terms of the following form: we choose arbitrary numbers $j_0, \ldots, j_{N-1}$,
and insert $e_{j_0 1}, e_{j_1 j_0},\ldots, e_{j_N j_{N-1}}$ into $z_1$.
There are 
\[
\frac{(k+1)(k)\cdots(k+1-N)}{(N+1)!}
\]
ways for this choice for any fixed $j_0, \ldots, j_{N-1}$.
Any such term $z_3$ appears in $\Tr S^{k+1} A$ with coefficient
\[
\frac{s_1'! \cdots s_n'!}{(k+1)!}
\]
where $s_l'$ is the total number of $e_{li}$'s (for some $i$) in $z_3$,
i.e., $s_l+ \text{number of } j_i\text{'s with }j_i=l$, $0 \leq i <N$.

Combining the results of the last two paragraphs, we see that
$\{\Tr S^{k+1-N}A,y_1\}$ must appear with coefficient
\[
\left (\frac{(k+1)(k)\cdots(k+1-N)}{(N+1)!} \sum \frac{s_1'! \cdots s_n'!}{(k+1)!} \right)\left.\middle/ \frac{s_1! s_2! \cdots s_n!}{(k-N)!}\right.
=
\frac{1}{(N+1)!} \sum \frac{s_1'! \cdots s_n'!}{s_1! s_2! \cdots s_n!}
,
\]
where the summation is over all length-$N$ sequences $\{j_l\}$ of integers from 1 to $n$.
We claim that 
\[
\frac{\sum s_1'! \cdots s_n'!}{ s_1! \cdots s_n!}=(k+n) \cdots (k+n-N+1).
\]
To see this, notice that $\frac{\sum s_1'! \cdots s_n'!}{ s_1! \cdots s_n!}$
is the coefficient of $t^N$ in the expression
\[
N! \prod_{i=1}^{n} \left(1+(s_i+1) t + \frac{(s_i+1)(s_i+2)}{2!} t^2 + \cdots \right).
\]
The above generating function equals $N! \prod_{i=1}^{n} (1-t)^{-(s_i+1)}=N! (1-t)^{-(k+1-N+n)}$,
and the coefficient of $t^N$ in this expression is $(k+n) \cdots (k+n-N+1)$.

Finally, we arrive at the simplified coefficient of $\{\Tr S^{k+1-N}A,y_1\}$:
\[
C_N=\frac{1}{(N+1)!} \sum \frac{s_1'! \cdots s_n'!}{s_1! s_2! \cdots s_n!}=
\frac{(k+n) \cdots (k+n-N+1)}{(N+1)!},
\]
as desired.
\end{proof}

Now we will give an alternative proof of Theorem \ref{theorem:firstcentralelementtheorem}.
\begin{proof}
Let $f(z)$ be the polynomial satisfying $f(z)-f(z-1)=\partial^n(z^n \dfe(z))$ and $g(z)=z^{1-n}\frac{1}{\partial^{n-1}} f(z)$
(in the expression for $g(z)$, we discard any negative powers of $z$).
Note that if $g(z)=g_{k+1}z^{k+1}+\cdots+g_1 z$, then
	\[
	\dfe(z)=\sum_{j=1}^{k+1}\sum_{i=0}^{j-1} \frac{1}{j+n}{j+n \choose i+1} (-1)^i g_j z^{j-1-i},
	\ \ \
\dfe_{j-1}=\sum_{i=0}^{k-j+1}\frac{1}{j+i+n}{j+n+i \choose i+1}(-1)^i g_{j+i}.
\]
Lemma \ref{lemma:com2noncom} allows us to write
\begin{align*}
\left[\sum_{j=1}^{k+1} g_{j} \tr S^{j}A,y\right]&=\left\{\sum_{j=1}^{k+1} \sum_{i=0}^{j-1} \frac{1}{j+n}{j+n \choose i+1} (-1)^i g_j\tr S^{j-i} A, y\right\}\\
&=
\left\{ \sum_{j=1}^{k+1} \sum_{i=0}^{k-j+1} \frac{1}{j+i+n}{j+i+n \choose i+1} (-1)^i g_{j+i}\tr S^{j} A, y\right\}
=
\left\{\sum_{j=1}^{k+1} \dfe_{j-1}\tr S^{j}A, y\right\}.
\end{align*}
Hence,
\begin{align*}
[t_1,y]=\sum_{i=1}^n [x_i,y] y_i=\sum_{i=1}^n \{x_i,y\} y_i=-\left\{\sum_{j=1}^{k+1} \dfe_{j-1}\tr S^{j}A, y\right\}=-\left[\sum_{j=1}^{k+1} g_{j} \tr S^{j}A,y\right],
\end{align*}
where the third equality follows from the fact that
$\pt_1+\sum_{j=1}^{k+1} \dfe_{j-1}\tr S^{j}A$ is Poisson-central in $H_\dfe'(\gl_n)$ (see Example \ref{example:l1}).
Thus, we get $t_1'=t_1+C'$, where
\[
C'=\sum_{j=1}^{k+1} g_{j} \tr S^{j}A=\Res_{z=0} g(z^{-1})\det(1-zA)^{-1} z^{-1} dz.
\]
\end{proof}
\begin{remark}
Comparing the formula for $c_1$ in Example \ref{example:l1}
to the one from Theorem \ref{theorem:firstcentralelementtheorem},
we see that they differ only by a change $z \df(z) \rightsquigarrow g(z)$.
We expect that a similar \emph{twist} of the formula for $c(t)$ 
given in Theorem \ref{theorem:poissoncentergln} will provide
the formulas for the actions of the generators of $\mathfrak{z}(H_\df(\gl_n))$ on the Verma module $M(\lambda-\rho)$.
\end{remark}

%The key step is to find a formula similar to Lemma 7.1 that connects
%$[\frac{\det(1-zA)}{\det(1-tA)},y]$ and $\{\frac{\det(1-zA)}{\det(1-tA)},y\}$.

$\ $

\section{Algebras $H_\dfe(\spn_{2n})$ and $H_\dfe'(\spn_{2n})$}
\label{section:sp2n}

Let $V$ be the standard $2n$-dimensional representation of $\spn_{2n}$ with symplectic form $\omega$, and let $\dfe: V\times V \rightarrow U(\spn_{2n})$ be an $\spn_{2n}$ invariant bilinear form.
The \emph{infinitesimal Cherednik algebra} $H_\dfe(\spn_{2n})$ is defined as the quotient of $U(\spn_{2n})\ltimes T(V)$
 by the relation $[x,y]=\dfe(x,y)$ for all $x,y \in V$, such that $H_\dfe(\spn_{2n})$ satisfies the PBW property.
In \cite{EGG}, Theorem 4.2, it was shown that $H_\dfe(\spn_{2n})$ satisfies the PBW property if and only if
$\dfe=\sum_{j=0}^k \dfe_{2j} r_{2j}$
where $r_j$ is the symmetrization of the coefficient of $z^j$ in the expansion of
\begin{eqnarray*}
    \omega(x,(1-z^2 A^2)^{-1} y) \det(1-z A)^{-1}=r_0(x,y)+r_2(x,y)z^2 + \cdots.
\end{eqnarray*}
Note that for $A \in \spn_{2n}$, the expansion of $\det(1-z A)^{-1}$ contains only even powers of $z$.

\begin{remark}
For $\dfe_0 \neq 0$, there is an isomorphism
$H_{\dfe_0 r_0}(\spn_{2n}) \cong U(\spn_{2n})\ltimes A_n$, where $A_n$ is the $n$-th Weyl algebra (see \cite{EGG} Example 4.11).
Thus, we can regard $H_\dfe (\spn_{2n})$ as a deformation of $U(\spn_{2n})\ltimes A_n$.
\end{remark}

Choose a basis $v_j$ of $V$, so that
\[
\omega(x,y)=x^T J y,
\]
with
\[
J=
\left(
\begin{matrix}
0 & 1 & 0 & 0 & \cdots  &0 & 0\\
-1& 0 & 0 & 0 & \cdots & 0 & 0\\
0 & 0 & 0 & 1 & \cdots & 0 & 0\\
0 & 0 & -1& 0 & \ddots & \vdots & \vdots\\
\vdots & \vdots & \vdots & \ddots & \ddots & \vdots & \vdots\\
0 & 0 & 0  & 0 & \cdots & 0 & 1\\
0 & 0 & 0  & 0 & \cdots & -1 & 0 
\end{matrix}
\right).
\]
As before, we study the noncommutative infinitesimal Cherednik algebra $H_\dfe(\spn_{2n})$ by considering its Poisson analogue
$H_\dfe'(\spn_{2n})$.
We define
$\sum_{i=0}^{n} \mathcal{Q}_i z^{2i} = \det(1-zA)$ and
\[
		\st_i=(-1)^{i-1}\sum_{j=1}^{2n} \{\mathcal{Q}_{i},v_j\} v_j^*,
\] 
where $\{v_j^*\}$ is dual to $\{v_j\}$ (that is, $\omega(v_i, v_j^*)=\delta_{ij}$).
When viewed as an element of $\mathbb{C}[\spn_{2n} \ltimes V]$,
\[
\st_i=-\sum_{j=0}^{i-1} \mathcal{Q}_j\omega (A^{2i-1-2j} v, v),
\]
so $\st_i$ is $\spn_{2n}$ invariant and independent of the choice of basis $\{v_i\}$.

%\begin{remark}[Due to Pavel Etingof]
%As in Remark \ref{remark:glnpoisson},
%we note that  
%\[
%\{v_i,v_j\}=\sum \zeta_l r_l(v_i, v_j) = \sum 2^{l+1} \zeta_l\frac{\partial \Tr(S^{l+1}A)}{\partial (a_{ij}-a_{ji})};
%\]
%this follows from
%\[
%\frac{\partial}{\partial B}(\det(1- 2\tau A)^{-1}) =\frac{\tau \omega(x_j,(1-\tau^2 A^2)^{-1} y_i)}{\det(1-\tau A)}
%\]
%when $B=v_i v_j^T-v_j v_i^T$. 
%Again, if $\{v_i, v_j\}=F_{ij}(A)$, the Jacobi identity implies that
%$F_{ij}(A)=\frac{\partial F}{\partial A}$ for some $Sp(2n)$ invariant function $F$,
%and that $\Lambda^2 D_A(F)=0$,
%where $D_A$ is the matrix with $(D_A)_{_{ij}}=\frac{\partial}{\partial a_{ij}}$.
%One can then show that the only $Sp(2n)$ invariant functions $F$ satisfying this partial differential equation are
%linear combinations of $\Tr(S^l A)$ (for odd $l$).
%\end{remark}

\begin{proposition}
The Poisson center of $H_0' (\spn_{2n})$ is $\mathbb{C}[\st_1,\ldots, \st_n]$.
\end{proposition}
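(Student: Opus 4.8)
The plan is to mirror the argument for $H_0'(\gl_n)$ given in the proof of Theorem~\ref{theorem:poissoncentergln}, adapted to the symplectic setting. There are two inclusions to establish. For $\mathbb{C}[\st_1,\ldots,\st_n]\subseteq \zpois(H_0'(\spn_{2n}))$, I would first check that each $\st_i$ Poisson-commutes with $S(\spn_{2n})$: since $\st_i$ is $\spn_{2n}$ invariant by the formula $\st_i=-\sum_{j}\mathcal{Q}_j\,\omega(A^{2i-1-2j}v,v)$ already recorded in the text, $\{\st_i,A\}=0$ for all $A\in\spn_{2n}$, hence $\{\st_i,g\}=0$ for all $g\in S(\spn_{2n})$. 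Then I would show $\{\st_i,v_l\}=0$ for the basis vectors $v_l\in V$. Using the Leibniz rule and $\st_i=(-1)^{i-1}\sum_j\{\mathcal{Q}_i,v_j\}v_j^*$, one gets $\{\st_i,v_l\}=(-1)^{i-1}\sum_j\{\{\mathcal{Q}_i,v_j\},v_l\}\,v_j^* + (-1)^{i-1}\sum_j\{\mathcal{Q}_i,v_j\}\{v_j^*,v_l\}$; in $H_0'$ the second sum vanishes because $\dfe=0$, and the first vanishes by the symplectic analogue of the identity $\{\{\mathcal{Q}_i,v_j\},v_l\}$-symmetry (the Jacobi identity applied to the two commuting copies $V\subseteq H_0'$, exactly as in the reference to \cite{T}, Lemma~2.1). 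A short symmetry/antisymmetry bookkeeping against the form $J$ is needed here but is routine.

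For the reverse inclusion $\zpois(H_0'(\spn_{2n}))\subseteq\mathbb{C}[\st_1,\ldots,\st_n]$, I would argue as in \cite{T}, Theorem~2, via the coadjoint geometry of the Lie group $G=Sp_{2n}\ltimes V$ (here $V$ is abelian since $\dfe=0$). The Poisson center of $S(\mathfrak{g})$ for $\mathfrak{g}=\spn_{2n}\ltimes V$ is the ring of $G$-invariant functions on $\mathfrak{g}^*\cong\spn_{2n}^*\oplus V^*$, and one computes this invariant ring explicitly: a generic coadjoint orbit is parametrized by the values of the $\spn_{2n}$-Casimirs $\mathcal{Q}_i(A)$ together with the mixed invariants $\omega(A^{2k-1}v,v)$, and the precise combinations that are actually invariant (not merely semi-invariant) are exactly the $\st_i$. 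This is the step I expect to be the main obstacle: one must verify that these $n$ elements generate the \emph{whole} invariant ring and are algebraically independent, which requires a careful dimension count (the generic orbit has codimension $n$ in the $\binom{2n+1}{2}+2n$-dimensional space) and an argument that no further invariants survive — the same kind of analysis Tikaradze carries out for $\gl_n$, but with the symplectic Casimirs and the single vector $v$ replacing the vector/covector pair.

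Finally, I would note that algebraic independence of $\st_1,\ldots,\st_n$ can be seen directly from their top-degree terms: $\st_i$ has leading term $(-1)^{i-1}\omega(A^{2i-1}v,v)$ up to lower $\mathcal{Q}_j$-corrections, and these $n$ polynomials in $(A,v)$ are visibly independent (e.g. specialize $A$ to a generic diagonal-in-symplectic-pairs matrix), so $\zpois(H_0'(\spn_{2n}))$ contains a polynomial ring in $n$ variables; combined with the coadjoint computation bounding it above, equality follows. The whole proof is thus a transcription of the $\gl_n$ case with the substitutions $(V,V^*)\rightsquigarrow(V,\omega)$ and $\gl_n$-Casimirs $\rightsquigarrow\spn_{2n}$-Casimirs, the only genuinely new input being the identification of the correct $\spn_{2n}\ltimes V$ coadjoint invariants.
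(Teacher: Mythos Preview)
Your outline diverges from the paper's proof in a way that hides a genuine difficulty. The paper does not verify the two inclusions separately. Instead it invokes Kaneta \cite{K}: for $L=\spn_{2n}\ltimes V$ there is an explicit $2n$-dimensional slice $M\subset L$ whose coadjoint orbit is dense in $L^*$, and Kaneta already computes $\mathbb{C}[L^*]^S=\mathbb{C}[f_1,\ldots,f_n]$ with the $f_i|_M$ written as elementary symmetric polynomials times a fixed factor. The paper then simply checks $\st_i|_M=f_i$, which is a short direct computation. Both inclusions fall out at once, and no Poisson-bracket verification that $\{\st_i,v\}=0$ is needed at this stage.

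The step you call ``routine symmetry/antisymmetry bookkeeping'' is exactly where the symplectic case parts company with $\gl_n$. In the $\gl_n$ setting one has $\{\{\mathcal{Q}_k,y_i\},y_l\}=0$ term by term (this is the content of \cite{T}, Lemma~2.1, a determinant identity). For $\spn_{2n}$ the individual expressions $\{\{\mathcal{Q}_i,v_j\},v_l\}$ do \emph{not} vanish; only the full sum $\sum_j\{\{\mathcal{Q}_i,v_j\},v_l\}\,v_j^*$ does, and that is precisely Lemma~\ref{lemma:sp2nkey}, which the paper flags as ``quite technical'' and relegates to the Appendix (reduction to $\spn_4$ and $\spn_2$ after restriction to $\mathfrak{h}$). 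The paper even remarks explicitly, in the proof of Theorem~\ref{theorem:sp2npoissoncenter}, that this is where the $\spn_{2n}$ computation differs from $\gl_n$. So your ``easy'' inclusion already requires the hardest lemma in this section; your proposal is not wrong, but it is not the shortcut you think it is, and the paper's slice-and-cite approach bypasses it entirely for the undeformed statement.

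For the reverse inclusion your sketch (codimension-$n$ generic orbits, independence of leading terms) is in the right spirit but not yet a proof; the paper closes this by importing the answer from \cite{K} rather than recomputing the invariant ring. If you want to avoid citing Kaneta you would effectively have to reprove his result for $ISp(2n)$.
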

\begin{proof}
We will follow a similar approach as in the proof of Theorem 2.1, \cite{T}.
Let $L$ be the Lie algebra $\mathfrak{sp}_{2n} \ltimes V$ and $S$ be the Lie group of $L$.
We need to verify that $\mathbb{C}[\st_1, \ldots, \st_n] = \zpois(H_0'(\spn_{2n}))$, 
the latter being identified with $\mathbb{C}[L^*]^S$.
Let $M\subset L$ be the $2n$-dimensional subspace consisting of elements of the form
\[
y=
\left\{
\left(
\begin{matrix}
0      & y_{12} & 0              & \cdots & 0    & 0\\
y_{21} & \ddots & \ddots         & \ddots & \vdots     & \vdots \\
0      & \ddots & 0              & y_{2n-3,2n-2} & 0 &0 \\
0      & 0      & y_{2n-2, 2n-3} & 0      & 0   & 0 \\
0      & \cdots & 0              & 0      & 0   & y_{2n-1,2n} \\
0      & \cdots & 0              & 0      & 0   & 0
\end{matrix}
\right), 
\left(
\begin{matrix}
0\\
0\\
\vdots \\
0 \\
0\\
y_{2n}
\end{matrix}
\right)
\right\},
\]
where all the $y$'s belong to $\mathbb{C}$.
In what follows, we identify $L^*$ and $L$ via the non-degenerate pairing,
so that the coadjoint action of $S$ is on $L$.
We use the following two facts proved in \cite{K}:
first, that the orbit of $M$ under the coadjoint action of $S$ on $L^*$ is dense in $L^*$;
and second, that $\mathbb{C}[L^*]^S \cong \mathbb{C}[f_1, \ldots, f_n]$,
where 
\[
\left. f_i \right|_M(y)=\sigma_{i-1}(y_{2,1} y_{1,2}, y_{3,2} y_{2,3}, \ldots, y_{2n-2,2n-3} y_{2n-3,2n-2}) y_{2n-1, 2n} y_{2n}^2
\]
and $\sigma_j$ is the $j$-th elementary symmetric polynomial.
It is straightforward to see that $\left. \st_i \right|_M =  f_i$, and so $\mathbb{C}[L^*]^S \cong \mathbb{C}[\st_1, \ldots, \st_n]$ as desired.
\end{proof}
As before, let $\dfe(z)=\dfe_0 + \dfe_2 z^2+\dfe_4 z^4 + \cdots$.
  
\begin{theorem}
\label{theorem:sp2npoissoncenter}
The Poisson center $\zpois(H_\dfe'(\spn_{2n}))=\mathbb{C}[\st_1+c_1,\st_2+c_2,\ldots,\st_n+c_n]$,
where $(-1)^{i-1}c_i$ is the coefficient of $t^{2i}$ in the series
\[
c(t)=2\Res_{z=0} \dfe(z^{-1}) \frac{\det(1-tA)}{\det(1-zA)} \frac{z^{-1}}{1-z^2 t^{-2}} dz.
\]
\end{theorem}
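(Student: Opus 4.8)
The plan is to follow the template of the proof of Theorem \ref{theorem:poissoncentergln} almost verbatim, adapting each step to the symplectic setting. First I would establish that the reverse inclusion $\zpois(H_\dfe'(\spn_{2n})) \subseteq \mathbb{C}[\st_1+c_1,\ldots,\st_n+c_n]$ reduces to the associated-graded statement $\zpois(H_0'(\spn_{2n}))=\mathbb{C}[\st_1,\ldots,\st_n]$, which is the preceding Proposition; this is a standard filtered-to-graded argument, using that $\st_i+c_i$ has the same leading term as $\st_i$ under the filtration with $\deg(v)=1$, $\deg(\spn_{2n})=0$. So the real content is to verify that each $\st_i+c_i$ is actually Poisson-central.

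Next I would reduce the centrality check to a system of PDEs. Since $\st_k \in \zpois(H_0'(\spn_{2n}))$ and $c_k \in \zpois(S(\spn_{2n}))$, both $\st_k+c_k$ already Poisson-commute with $S(\spn_{2n})$. The anti-involution sending $v_j \mapsto v_j^*$ (extended suitably to $\spn_{2n}$) fixes $\st_k$ and $c_k$, so it suffices to check $\{\st_k+c_k, v_l\}=0$ for basis vectors $v_l \in V$. Writing $\{g,v_l\}=\sum_{i,j}\frac{\partial g}{\partial e_{ij}}\{e_{ij},v_l\}$ for $g\in S(\spn_{2n})$ and using the analogue of $\{\{\mathcal{Q}_k,v_i\},v_l\}=0$ (which should follow as in Lemma 2.1 of \cite{T}), one expands $\{\st_k,v_l\}$ in terms of $\{v_i^*,v_l\}=\dfe(v_i^*,v_l)$ and $\{\mathcal{Q}_k,v_i\}$. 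Multiplying by $(-1)^{k-1}t^{2k}$ and summing over $k$ collapses the family of equations into a single identity involving $\det(1-tA)$; then, invoking $GL$-invariance — here really $\spn_{2n}$-invariance — and density of regular semisimple (diagonalizable) elements, I would restrict to $A$ in a Cartan subalgebra, say $A=\mathrm{diag}(a_1,-a_1,a_2,-a_2,\ldots)$, where the expression for $c(t)$ becomes a product of geometric-type factors and the verification is a direct residue computation.

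The main obstacle I expect is bookkeeping the symplectic-specific features: the factor of $2$ in $c(t)$, the restriction to even powers of $z$ and $t$, the appearance of $\det(1-zA)$ rather than $\det(1-zA^2)$-type expressions in the right place, and correctly handling the pairing $\omega(x,y)=x^TJy$ so that $\dfe(v_i^*,v_l)$ has the right residue form $\Res_{z=0}\dfe(z^{-1})\frac{\omega((1-z^2A^2)^{-1}v_l, v_i)}{\det(1-zA)}\,\frac{dz}{z}$ or its analogue. Concretely, on the Cartan $\st_i|_M$ pairs coordinate $2k-1$ with $2k$, so when $A=\mathrm{diag}(a_1,-a_1,\ldots)$ one gets $\omega(A^{2i-1-2j}v,v)$ contributing $\sum_k a_k^{2i-1-2j}\cdot(\text{cross terms in }v_{2k-1},v_{2k})$, and the residue of $\dfe(z^{-1})$ against $\frac{1}{(1-z^2a_k^2)\det(1-zA)}$ must match $\frac{\partial c(t)}{\partial a_k}$ divided by $\frac{\partial \det(1-tA)}{\partial a_k}=-\frac{2ta_k\det(1-tA)}{1-t^2a_k^2}$; the factor $2$ in $c(t)$ is exactly what accounts for the two columns $2k-1,2k$ collapsing to a single variable $a_k$. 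Once the restriction to the Cartan is set up correctly, checking that the stated $c(t)$ solves the resulting ODE/residue identity is a routine manipulation, entirely parallel to the last display in the proof of Theorem \ref{theorem:poissoncentergln}.
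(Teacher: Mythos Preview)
Your outline has a genuine gap at the step where you invoke ``the analogue of $\{\{\mathcal{Q}_k,v_i\},v_l\}=0$ (which should follow as in Lemma 2.1 of \cite{T}).'' In the $\gl_n$ case one indeed has $\sum_j\{\{\mathcal{Q}_i,y_j\},y\}x_j=0$, which is what lets the calculation collapse immediately to a PDE. For $\spn_{2n}$ this fails: $\sum_j\{\{\mathcal{Q}_i,v_j\},v\}v_j^*\neq 0$. The paper singles this out explicitly. Expanding the double bracket via a basis $B$ of $\spn_{2n}$ gives two kinds of terms,
\[
\sum_j\sum_{e\in B}\frac{\partial \mathcal{Q}_i}{\partial e}\{e(v_j),v\}v_j^*
\quad+\quad
\sum_j\sum_{e\in B}\Bigl\{\frac{\partial \mathcal{Q}_i}{\partial e},v\Bigr\}e(v_j)v_j^*,
\]
and only the \emph{second} sum vanishes; proving this is the content of Lemma~\ref{lemma:sp2nkey}, whose verification (carried out in the Appendix) is a genuinely new, symplectic-specific computation rather than a transcription of \cite{T}. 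The surviving first sum is not zero but, crucially, it has the same shape as $\{c_i,v\}$ and can be handled after restriction to the Cartan. Until you supply an argument for Lemma~\ref{lemma:sp2nkey} (or an alternative mechanism to control the extra term), the reduction to the diagonal PDE does not go through.

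A smaller point: the anti-involution device you borrow from the $\gl_n$ argument (swapping $V$ with $V^*$) has no direct counterpart here, since $V$ is self-dual; the paper simply checks $\{\st_i+c_i,v\}=0$ for all $v\in V$ directly. Once the missing lemma is in place, your endgame---restrict to $A=\mathrm{diag}(a_1,-a_1,\ldots)$, convert to a single residue identity, and verify that the stated $c(t)$ satisfies it, with the factor of $2$ arising from the pairing of columns $2k-1$ and $2k$---matches the paper's and is correct.
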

\begin{proof}
%We first note that $c_i \in \zpois(S(\spn_{2n}))$.
Since $c_i \in \zpois(S(\spn_{2n}))$, $\{\st_i+c_i,g\}=0$ for any $g\in S(\spn_{2n})$,
and so it suffices to show that  $\{\st_i+c_i,v\}=0$  for all $v\in V$.
By the Jacobi rule,
\[
\{\st_i,v\}= (-1)^{i-1} \sum_{j} \{\mathcal{Q}_i, v_j\} \{v_j^*,v\}+(-1)^{i-1} \sum_{j} \{\{\mathcal{Q}_i,v_j\},v\}v_j^*.
\]
Thus,
\begin{equation}
\label{eqn:sp2npoisson}
\{\st_i+c_i,v\}=(-1)^{i-1} \sum_{j} \{\mathcal{Q}_i, v_j\} \{v_j^*,v\}+(-1)^{i-1} \sum_{j} \{\{\mathcal{Q}_i,v_j\},v\}v_j^* + \{c_i,v\}.
\end{equation}
In the case of $H_\dfe'(\gl_n)$, $\sum_j \{ \{ \mathcal{Q}_i, y_j\}, y\} x_j=0$ by straightforward application of properties of the determinant.
However, for $H_\dfe'(\spn_{2n})$, $\sum_{j} \{\{\mathcal{Q}_i,v_j\},v\}v_j^* \neq 0$.
%however, since each term of $(-1)^{i-1}\sum_{j} \{\{\beta_i,v_j\},v\}v_j^*$ is not $\mathfrak{sp}_2n$-invariant,
%we must compute it separately first.
To calculate this sum, let $B$ be a basis of $\mathfrak{sp}_{2n}$ (the basis elements are given in the Appendix,
but for the purposes of this section, the specific elements are not needed).
Write
\[
\sum_{j} \{\{\mathcal{Q}_i,v_j\},v\}v_j^*=
\sum_{j} \left\{\sum_{e\in B} \frac{\partial \mathcal{Q}_i}{\partial e} e(v_j),v \right\}v_j^*
=
\sum_{j}\left( \sum_{e\in B} \frac{\partial \mathcal{Q}_i}{\partial e}  \{e(v_j),v\}v_j^*+ \left\{\frac{\partial \mathcal{Q}_i}{\partial e},v\right\} e(v_j)v_j^* \right).
\]
\begin{lemma}
\label{lemma:sp2nkey}
\[
\sum_{j}\sum_{e\in B}\left\{\frac{\partial \mathcal{Q}_i}{\partial e} ,v \right\} e(v_j)v_j^*=0.
\]
\end{lemma}
The proof of this lemma is quite technical and is provided in the Appendix.

Using the fact that $\sum_{j} \{\{\mathcal{Q}_i,v_j\},v\}v_j^*=\sum_{j} \sum_{e\in B} \frac{\partial \mathcal{Q}_i}{\partial e}  \{e(v_j),v\}v_j^*$,
we can restrict (\ref{eqn:sp2npoisson}) to diagonal matrices,
which are spanned by elements
$e_i=\mathrm{diag}(0,\ldots,1,-1,0,\ldots,0)$ with 1 at the $2i-1$-th coordinate. Thus,
the condition $\{\st_i+c_i,v\}=0$ is equivalent to:
\begin{align*}
0&=(-1)^{i-1} \sum_{j} \sum_k \frac{\partial \mathcal{Q}_i}{\partial e_k} \{e_k, v_j\} \{v_j^*,v\}+(-1)^{i-1}\sum_k \left(\frac{\partial \mathcal{Q}_i}{\partial e_k}\{ v_{2k-1},v\}v_{2k}+\frac{\partial \mathcal{Q}_i}{\partial e_k}\{ v_{2k},v\}v_{2k-1}\right)+ \sum_k \frac{\partial c_i}{\partial e_k}\{e_k, v\}\\
&=2(-1)^{i-1}\sum_k \frac{\partial \mathcal{Q}_i}{\partial e_k} (v_{2k-1}\{v_{2k},v\}+v_{2k}\{v_{2k-1},v\})
 + \sum_k \frac{\partial c_i}{\partial e_k}\{e_k, v\}.
\end{align*}
Multiplying the above equation by $(-1)^{i-1} t^{2i}$ and summing over $i$ for $i=1,\ldots,n$, 
the required condition transforms into:
\[
0=2\sum_k \frac{\partial \det(1-tA)}{\partial e_k} (v_{2k-1}\{v_{2k},v\}+v_{2k}\{v_{2k-1},v\})+
\sum_k \frac{\partial c(t)}{\partial e_k}\{e_k, v\}.
\]

It suffices to check this condition for basis vectors $v=v_{2s-1}$ and $v=v_{2s}$.
Substituting, we get
\[
0=2\sum_k \frac{\partial \det(1-tA)}{\partial e_k} (v_{2k-1}\{v_{2k},v_{2s-1}\}+v_{2k}\{v_{2k-1},v_{2s-1}\})+\frac{\partial c(t)}{\partial e_s}v_{2s-1}
\] 
and
\[
0=2\sum_k \frac{\partial \det(1-tA)}{\partial e_k} (v_{2k-1}\{v_{2k},v_{2s}\}+v_{2k}\{v_{2k-1},v_{2s}\})-\frac{\partial c(t)}{\partial e_s}v_{2s}.
\] 
These last two formulas both reduce to
\begin{align*}
\frac{\partial c(t)}{\partial e_s} &= -2\frac{\partial \det(1-tA)}{\partial e_s}\{v_{2s},v_{2s-1} \}\\
&=-2\frac{\partial \det(1-tA)}{\partial e_s}\left(\Res_{z=0} \dfe(z^{-1})\omega(v_{2s},(1-z^2 A^2)^{-1} v_{2s-1}) \det(1-z A)^{-1}z^{-1} dz\right)\\
&=
2\Res_{z=0} \dfe(z^{-1})\frac{\partial \det(1-tA)}{\partial e_s}\frac{1}{1-z^2 \lambda_s^2} \det(1-z A)^{-1}z^{-1} dz,
\end{align*}
and it is straightforward to verify that $c(t)$ satisfies the above equation.
\end{proof}

We now briefly consider the center of $H_\df(\spn_{2n})$.
Let $\beta_i\in U(\spn_{2n})$ be the symmetrization of $\mathcal{Q}_i$, and let
\[
		t_i=(-1)^{i-1}\sum_{j=1}^{2n} [\beta_{i},v_j] v_j^*.
\] 
Clearly, $t_i$ is independent of the choice of basis $\{ v_j\}$ and $\spn_{2n}$ invariant. 

%\begin{conjecture}
%The center of $H_0(\spn_{2n})$ is $\mathbb{C}[t_1, \ldots, t_n]$.
%\end{conjecture}
%\begin{remark}
% In the $\gl_n$ case, $t_i$ is the symmetrization of $\tau_i$, which immediately implies that $\mathfrak{z}(H_0(\gl_n))=\mathbb{C}[t_1, \ldots, t_n]$. In the $\spn_{2n}$ case, $t_i$ is no longer the symmetrization of $\st_i$.
% \end{remark}

\begin{conjecture}
\footnote{This conjecture was recently proved in \cite{LT}, using another presentation of $H_\dfe (\spn_{2n})$.}
\label{conjecture:sp2ncenter}
The center of $H_\dfe(\spn_{2n})$ is
 $\mathfrak{z}(H_\dfe(\spn_{2n}))=\mathbb{C}[t_1+C_1, \ldots, t_n+C_n]$ for some $C_i \in \mathfrak{z}(U(\spn_{2n}))$.
\end{conjecture}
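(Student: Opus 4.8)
The plan is to adapt to $\spn_{2n}$ the strategy of \cite{T}, treating $H_\dfe(\spn_{2n})$ as a filtered deformation of $H_0(\spn_{2n})=U(\spn_{2n})\ltimes S(V)=U(\spn_{2n}\ltimes V)$ (with $V$ an abelian ideal) for the filtration $\deg(v)=1$, $v\in V$, $\deg(g)=0$, $g\in\spn_{2n}$, so that $\gr H_\dfe(\spn_{2n})=U(\spn_{2n}\ltimes V)$. Two ingredients are needed. \emph{First}, the undeformed center: by the above Proposition $\zpois(H_0'(\spn_{2n}))=\mathbb{C}[\st_1,\ldots,\st_n]$, and since $\gr\,\mathfrak{z}(U(\spn_{2n}\ltimes V))$ always embeds into $\zpois(S(\spn_{2n}\ltimes V))=\zpois(H_0'(\spn_{2n}))$, while each $t_i|_{\dfe=0}$ is a central element of $U(\spn_{2n}\ltimes V)$ with leading symbol $\st_i$ (the $\spn_{2n}$-analogue of \cite{T}, Lemma~2.1 and Theorem~2.1; verifying centrality here uses Lemma~\ref{lemma:sp2nkey}, since, unlike the $\gl_n$ case, $\sum_j\{\{\mathcal{Q}_i,v_j\},v\}v_j^*\ne 0$), a comparison of associated gradeds yields $\mathfrak{z}(H_0(\spn_{2n}))=\mathbb{C}[t_1|_{\dfe=0},\ldots,t_n|_{\dfe=0}]$. \emph{Second}, one must show that this center lifts, i.e. that the injection $\gr\,\mathfrak{z}(H_\dfe(\spn_{2n}))\hookrightarrow\mathfrak{z}(\gr H_\dfe(\spn_{2n}))=\mathbb{C}[\st_1,\ldots,\st_n]$ is surjective, so that each $\st_i$ is the leading symbol of some central element $z_i\in H_\dfe(\spn_{2n})$.

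Granting these, I would pin down the form of the generators as follows. Fix $z_i\in\mathfrak{z}(H_\dfe(\spn_{2n}))$ with leading symbol $\st_i$ and put $w_i=z_i-t_i$; since $t_i$ has the same leading symbol ($t_i|_{\dfe=0}=\st_i$), $w_i$ has filtration degree (total $V$-degree) at most $1$. Its degree-one symbol lies in the $V$-degree-one component $U(\spn_{2n})\otimes V$ of $\gr H_\dfe(\spn_{2n})=U(\spn_{2n}\ltimes V)$; as $z_i$ is central and $t_i$ is $\spn_{2n}$-invariant, $[w_i,g]=0$ for all $g\in\spn_{2n}$, so this symbol is $\mathrm{ad}(\spn_{2n})$-invariant. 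But $(U(\spn_{2n})\otimes V)^{\spn_{2n}}\cong\mathrm{Hom}_{\spn_{2n}}(V,U(\spn_{2n}))=0$, the standard representation $V$ not occurring in the adjoint module $U(\spn_{2n})$ (its highest weight is not in the root lattice). Hence the symbol vanishes, $w_i=C_i\in U(\spn_{2n})$, and $[C_i,g]=0$ forces $C_i\in\mathfrak{z}(U(\spn_{2n}))$. Finally $\mathbb{C}[t_1+C_1,\ldots,t_n+C_n]=\mathbb{C}[z_1,\ldots,z_n]\subseteq\mathfrak{z}(H_\dfe(\spn_{2n}))$, and since the leading symbols $\st_i$ of the $z_i$ generate $\mathfrak{z}(\gr H_\dfe(\spn_{2n}))$, one more comparison of associated gradeds upgrades this inclusion to an equality.

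The hard part will be the second ingredient, the lifting statement. In \cite{T} the corresponding fact for $\gl_n$ is obtained by reduction modulo $p$ together with Azumaya-locus arguments; since $\spn_{2n}$ is again semisimple and $\gr H_\dfe(\spn_{2n})$ has the same PBW shape, one expects the argument to carry over, but it must be reworked in this setting — which is essentially what \cite{LT} does through a different presentation of $H_\dfe(\spn_{2n})$. A more computational alternative, parallel to the second proof of Theorem~\ref{theorem:firstcentralelementtheorem}, would be to construct the $C_i$ directly from the Poisson-central elements $\st_i+c_i$ of Theorem~\ref{theorem:sp2npoissoncenter}: this requires an $\spn_{2n}$-version of Lemma~\ref{lemma:com2noncom} expressing $[\beta_i,v]$ as a combinatorial twist of the Poisson brackets $\{\mathcal{Q}_j,v\}$, together with a noncommutative counterpart of the computation proving Theorem~\ref{theorem:sp2npoissoncenter}. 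The genuine new difficulty relative to $\gl_n$ is once again the term $\sum_j\{\{\mathcal{Q}_i,v_j\},v\}v_j^*$ — zero for $\gl_n$, nonzero for $\spn_{2n}$ by Lemma~\ref{lemma:sp2nkey} — whose noncommutative analogue would have to be controlled and absorbed into $C_i$.
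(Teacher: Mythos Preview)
The paper does not prove this statement at all: it is stated as a conjecture, with a footnote recording that it was subsequently established in \cite{LT} via a different realization of $H_\dfe(\spn_{2n})$. So there is no ``paper's own proof'' to compare against; what you have written is a proof \emph{strategy}, and you are candid that its core step is missing.

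Your outline is sound and well-organized. The reduction you give in the second paragraph --- once one knows the center lifts, forcing the generators into the shape $t_i+C_i$ with $C_i\in\mathfrak z(U(\spn_{2n}))$ by the root-lattice argument $(U(\spn_{2n})\otimes V)^{\spn_{2n}}=0$ --- is correct and is exactly the kind of observation that pins down the statement of the conjecture. The highest weight of $V$ indeed lies outside the root lattice of type $C_n$, so $V$ never occurs in $U(\spn_{2n})$, and your descent from filtration degree $\le 1$ to degree $0$ goes through.

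Two caveats. First, a minor one: in your ``first ingredient'' you assert that $t_i|_{\dfe=0}$ is central in $U(\spn_{2n}\ltimes V)$ and cite Lemma~\ref{lemma:sp2nkey}. That lemma is a \emph{Poisson} identity; it gives $\{\st_i,v\}=0$ in $H_0'(\spn_{2n})$, but the noncommutative statement $[t_i|_{\dfe=0},v]=0$ in $U(\spn_{2n}\ltimes V)$ does not follow from it formally --- you would need a noncommutative analogue (showing $\sum_j[[\beta_i,v_j],v]v_j^*=0$), or else bypass this by invoking Duflo's isomorphism to produce central generators with leading symbols $\st_i$ and then run your weight argument already at the level of $H_0$. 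Second, and this you acknowledge: the lifting step $\gr\,\mathfrak z(H_\dfe(\spn_{2n}))\twoheadrightarrow\mathfrak z(H_0(\spn_{2n}))$ is the entire content of the conjecture. Neither the reduction-mod-$p$ outline nor the ``combinatorial twist'' alternative you sketch is close to a proof; in \cite{T} the $\gl_n$ argument relies on specific features of $\gl_n\ltimes(V\oplus V^*)$, and the $\spn_{2n}$ case genuinely needed the new input of \cite{LT}. So your proposal should be read as a reduction of the conjecture to the lifting problem, together with a correct derivation of the precise form of the generators once lifting is granted --- not as a proof.
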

\medskip

$\ $

\section{Kostant's Theorem}
\label{section:kostant}
Recall Kostant's theorem in the classical case (\cite{BL}):
\begin{otheorem}
Let $\mathfrak{g}$ be a reductive Lie algebra with an adjoint-type Lie group $G$,
and let $J \subset \mathbb{C}[\mathfrak{g}^*]$ be the ideal generated by the homogeneous elements of $\mathbb{C}[\mathfrak{g}^*]^G$ of positive degree. 
Then:\\
\normalfont{(1)} $U(\mathfrak{g})$ is a free module over its center $\mathfrak{z}(U(\mathfrak{g}))$;\\
\normalfont{(2)} the subscheme of $\mathfrak{g}$ defined by $J$ is a normal reduced irreducible subvariety 
	that corresponds to the set of nilpotent elements in $\mathfrak{g}$.
\end{otheorem}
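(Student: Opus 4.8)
The plan is to prove the two assertions together by reducing everything to the commutative graded picture. Identify $\mathfrak{g}\cong\mathfrak{g}^*$ via the Killing form, write $r=\operatorname{rank}\mathfrak{g}$, let $f_1,\dots,f_r$ be homogeneous generators of $\mathbb{C}[\mathfrak{g}]^G$ with $J=(f_1,\dots,f_r)$, and let $\mathcal{N}\subset\mathfrak{g}$ be the nilpotent cone. For part (1) I would first use the PBW filtration: $\operatorname{gr}U(\mathfrak{g})=S(\mathfrak{g})$, and $\operatorname{gr}\mathfrak{z}(U(\mathfrak{g}))=S(\mathfrak{g})^G$, the nontrivial half being that the symmetrization map restricts to an isomorphism $S(\mathfrak{g})^G\xrightarrow{\sim}\mathfrak{z}(U(\mathfrak{g}))$, so no ``lower-order'' invariants appear in the associated graded. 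Then part (1) follows from the commutative statement that $\mathbb{C}[\mathfrak{g}]$ is free over $\mathbb{C}[\mathfrak{g}]^G$ by the general principle that a filtered module whose associated graded is free over $\operatorname{gr}R$, with a chosen homogeneous basis, is itself free on any lift of that basis; concretely, lift a homogeneous basis of the harmonics to $U(\mathfrak{g})$.

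So the whole theorem comes down to: (i) $f_1,\dots,f_r$ is a regular sequence in $\mathbb{C}[\mathfrak{g}]$, whence $\mathbb{C}[\mathfrak{g}]$ is free over $\mathbb{C}[f_1,\dots,f_r]$ by miracle flatness (finite type over a polynomial subring, fibres of the right dimension, $\mathbb{C}[\mathfrak{g}]$ Cohen--Macaulay); and (ii) the scheme $V(J)$ is reduced, irreducible, and normal with underlying set $\mathcal{N}$. For the set-theoretic identification: if $x$ is nilpotent, a cocharacter grading $x$ into strictly positive weights shows $0\in\overline{G\cdot x}$, so every positive-degree invariant kills $x$; conversely, if $x=x_s+x_n$ with $x_s\neq 0$, then the image $\chi(x)=\chi(x_s)$ in $\mathfrak{h}/W$ is nonzero by Chevalley restriction, so some $f_i$ does not vanish at $x$. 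The dimension count $\dim\mathcal{N}=\dim\mathfrak{g}-r$ holds because the regular nilpotent orbit is dense in $\mathcal{N}$ and has codimension $r=\dim\mathfrak{z}_\mathfrak{g}(e)$; this simultaneously gives irreducibility, $\mathcal{N}=\overline{G\cdot e}$, and the fact that the $r$ functions $f_i$ cut out a subscheme of codimension exactly $r$ in the regular ring $\mathbb{C}[\mathfrak{g}]$, hence a regular sequence and $\mathbb{C}[\mathfrak{g}]/J$ Cohen--Macaulay (in particular $S_2$).

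Normality then follows from Serre's criterion once $R_1$ is checked. The smooth locus of $V(J)_{\mathrm{red}}$ contains the dense orbit $G\cdot e$ because $df_1,\dots,df_r$ are linearly independent at a regular element (Kostant's transversality: the adjoint quotient $\chi\colon\mathfrak{g}\to\mathbb{C}^r$ is a submersion at every regular point), while the complement $\mathcal{N}\setminus(G\cdot e)$ is a union of nilpotent orbit closures of codimension $\ge 2$. Having $R_1$ and $S_2$, $\mathbb{C}[\mathfrak{g}]/J$ is normal, hence integrally closed, hence reduced; together with irreducibility of $\mathcal{N}$ this is exactly part (2), and the regular-sequence/Cohen--Macaulay property is precisely what powers the freeness in (i), closing part (1).

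The main obstacle is the $R_1$ step, and within it Kostant's transversality theorem that $df_1,\dots,df_r$ are independent along regular elements --- this is the analytic heart of the argument and is what ultimately forces $\chi$ to be flat with reduced special fibre; I would need either the full orbit classification to bound the codimension of the non-regular nilpotent locus, or a uniform argument that every non-regular nilpotent orbit has codimension $\ge 2$ in $\mathcal{N}$. A secondary subtlety, easy to overlook, is the exactness of $\operatorname{gr}\mathfrak{z}(U(\mathfrak{g}))=S(\mathfrak{g})^G$, i.e.\ the surjectivity half of the Harish-Chandra isomorphism, without which the lift of the harmonics basis would not be a free basis over the center.
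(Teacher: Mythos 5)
The paper itself does not prove this theorem. It is stated as a recalled background result (\emph{otheorem} block) and cited to Bernstein--Lunts [BL], whose paper title is literally \emph{``A simple proof of Kostant's theorem that $U(\mathfrak{g})$ is free over its center.''} So there is no in-paper argument to compare against; the authors simply invoke the classical statement before generalizing it to $H_\dfe(\spn_{2n})$.

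Your sketch is a correct outline of Kostant's original proof: Jacobson--Morozov plus Chevalley restriction for the set-theoretic identification $V(J)_{\mathrm{red}}=\mathcal{N}$; the dimension count making $f_1,\dots,f_r$ a regular sequence, hence $\mathbb{C}[\mathfrak{g}]/J$ Cohen--Macaulay ($S_2$); Kostant's differential criterion at regular elements for $R_1$; Serre's criterion for normality and thence reducedness; and, for part (1), lifting a homogeneous $\mathbb{C}[\mathfrak{g}]^G$-basis of $\mathbb{C}[\mathfrak{g}]$ through the PBW filtration, using $\gr \mathfrak{z}(U\mathfrak{g})=S(\mathfrak{g})^G$. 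A few remarks. The uniform codimension bound you ask for is standard and spares you any orbit classification: every adjoint orbit is even-dimensional (it carries the Kirillov--Kostant symplectic form), and the regular nilpotent orbit already has dimension $\dim\mathcal{N}=\dim\mathfrak{g}-r$, so any other nilpotent orbit has dimension at most $\dim\mathcal{N}-2$. When invoking Serre, apply $R_1$ to the scheme $V(J)$ itself rather than to $V(J)_{\mathrm{red}}$: Cohen--Macaulayness rules out embedded primes, irreducibility of the support gives a unique minimal prime, and the Jacobian criterion at regular nilpotents handles all height-$\le 1$ localizations, so $R_1+S_2$ yields normality and hence reducedness of $V(J)$ directly. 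Finally, for reductive but non-semisimple $\mathfrak{g}$ the Killing form is degenerate on the center, so identify $\mathfrak{g}\cong\mathfrak{g}^*$ by some other nondegenerate invariant form. With Kostant's transversality granted --- which you correctly flag as the analytic heart --- the argument is complete.
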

In \cite{T2}, Kostant's theorem was generalized to $H_\df(\gl_n)$. 
In this section, we provide a similar generalization for $H_\dfe(\spn_{2n})$
assuming Conjecture \ref{conjecture:sp2ncenter}: $\mathfrak{z}(H_\dfe(\spn_{2n}))=\mathbb{C}[t_1+C_1,\ldots,t_n+C_n]$.
As in Section \ref{section:gln}, we define $t_i'=t_i+C_i$.

%Let $A_m=S(\spn_{2n}\oplus V)$; we can view $A_m$ as the 
%algebra underlying the Poisson algebra $H_\dfe'(\spn_{2n})$.
Introduce a filtration on $H_\dfe(\spn_{2n})$ with $\deg g=1$ for all $g \in \spn_{2n}$ and $\deg v=m+\frac{1}{2}$
for all $v \in V$, where $m$ is half the degree of $\df(z)$; this choice of filtration is also clarified by \cite{LT}.
Let
\[
B_{m}=S(V \oplus \spn_{2n})/\left((-1)^{i-1} \sum_j \{\mathcal{Q}_i,v_j\} v_j^*+c_i^{\mathrm{top}}\right)_{1 \leq i \leq n},
\]
where $\pt_i':=\pt_i+c_i^{\mathrm{top}}$ are the generators of $\mathfrak{z}(H_{r_m}'(\spn_{2n}))$ given in Theorem \ref{theorem:sp2npoissoncenter};
if Conjecture \ref{conjecture:sp2ncenter} is true,
$c_i^{\mathrm{top}}$ is also the highest term of $C_i$.

\begin{theorem}
\normalfont{(1)} Assuming that Conjecture \ref{conjecture:sp2ncenter} is true, $H_\dfe(\spn_{2n})$ is a free module over its center.\\
\normalfont{(2)} $B_m$ is a normal complete-intersection integral domain.
\end{theorem}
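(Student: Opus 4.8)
The plan is to mimic the classical Kostant theorem and its $\gl_n$-analogue from \cite{T2}: prove the unconditional part (2) about the classical variety $\mathcal{N}:=\mathrm{Spec}(B_m)\subseteq\spn_{2n}\oplus V$ first, and then deduce part (1) from it. Here $\mathcal{N}$ is cut out of the polynomial ring $S(V\oplus\spn_{2n})$ by the $n$ elements $\pt_i':=\pt_i+c_i^{\mathrm{top}}$, which are homogeneous of positive degree $2i+2m$ for the grading $\deg g=1$ ($g\in\spn_{2n}$), $\deg v=m+\tfrac12$ ($v\in V$); in particular $\mathcal{N}$ is a cone, hence connected and containing $0$. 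Note also that each $\pt_i'$ is $\spn_{2n}$-invariant as a function on $\spn_{2n}\oplus V$, and that $(0,v)\in\mathcal{N}$ for every $v\in V$.

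For part (2) I would argue in three steps. (i) \emph{Complete intersection.} Since $S(V\oplus\spn_{2n})$ is Cohen--Macaulay, it suffices to show $\dim\mathcal{N}=2n^2+2n$, whereupon $\pt_1',\dots,\pt_n'$ is automatically a regular sequence and $B_m$ is a Cohen--Macaulay complete intersection (hence $S_2$). Because $\mathrm{Sp}_{2n}$ acts transitively on $V\setminus\{0\}$, the locus $\mathcal{N}\cap\{v\neq 0\}$ fibers over $V\setminus\{0\}$ with fiber $\mathcal{N}_{v_1}:=\{A\in\spn_{2n}:\pt_i'(A,v_1)=0,\ 1\le i\le n\}$, so $\dim(\mathcal{N}\cap\{v\neq0\})=2n+\dim\mathcal{N}_{v_1}$, while $\mathcal{N}\cap\{v=0\}=V(c_1^{\mathrm{top}},\dots,c_n^{\mathrm{top}})\times\{0\}$. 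The entire count thus reduces to showing that $c_1^{\mathrm{top}},\dots,c_n^{\mathrm{top}}$ is a regular sequence in $S(\spn_{2n})$ — equivalently that these $\spn_{2n}$-invariants, which Theorem~\ref{theorem:sp2npoissoncenter} (applied to $H_{r_m}'(\spn_{2n})$) expresses explicitly as polynomials in $\mathcal{Q}_1,\dots,\mathcal{Q}_n$, are algebraically independent: this gives $\dim(\mathcal{N}\cap\{v=0\})=2n^2$, and since $c_i^{\mathrm{top}}$ is the top $A$-degree homogeneous part of $\pt_i'(\cdot,v_1)$ (as $\deg_A\st_i(A,v_1)=2i-1<2i+2m$), a standard homogenization argument forces $\dim\mathcal{N}_{v_1}=2n^2$ as well; hence $\dim\mathcal{N}=2n^2+2n$. (ii) \emph{Normality.} Given $S_2$, by Serre's criterion it remains to prove $R_1$: that the singular locus $\mathcal{N}^{\mathrm{sing}}$ — the $\mathrm{Sp}_{2n}\times\mathbb{C}^\times$-stable set where the Jacobian of $(\pt_1',\dots,\pt_n')$ drops rank — has codimension $\ge 2$. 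Its part with $v=0$ lies in $V(c_i^{\mathrm{top}})\times\{0\}$, already of codimension $2n\ge 2$ in $\mathcal{N}$; for $v\neq 0$ one reduces again to the slice $v=v_1$ and bounds the rank-drop locus of the Jacobian of $A\mapsto(\pt_i'(A,v_1))$ together with the $\partial/\partial v$-directions, the estimate running parallel to the classical rank estimate for the nilpotent cone of $\spn_{2n}$ that controls the leading terms. (iii) \emph{Integral domain.} $\mathcal{N}$ is a connected normal affine scheme, hence integral, so $B_m$ is an integral domain. This proves part (2).

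For part (1), assume Conjecture~\ref{conjecture:sp2ncenter}, so $\mathfrak{z}(H_\dfe(\spn_{2n}))=\mathbb{C}[t_1',\dots,t_n']$ with $t_i'=t_i+C_i$. With respect to the filtration above one has $\gr H_\dfe(\spn_{2n})\cong S(V\oplus\spn_{2n})$ (the PBW property, with $V$ placed in degree $m+\tfrac12$) and $\gr t_i'=\pt_i'$, since the symbol of $t_i$ is $\pt_i$ and, as recorded in the excerpt, the highest term of $C_i$ is $c_i^{\mathrm{top}}$. By part (2) the $\pt_i'$ form a homogeneous regular sequence in the Cohen--Macaulay ring $S(V\oplus\spn_{2n})$, so that $S(V\oplus\spn_{2n})$ is a free graded module over $\mathbb{C}[\pt_1',\dots,\pt_n']$; moreover, since $\pt_1',\dots,\pt_n'$ are algebraically independent, $\gr\mathfrak{z}(H_\dfe(\spn_{2n}))=\mathbb{C}[\pt_1',\dots,\pt_n']$. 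Lifting a homogeneous $\mathbb{C}[\pt_i']$-basis of $\gr H_\dfe(\spn_{2n})$ to $H_\dfe(\spn_{2n})$ then produces, by the usual graded-to-filtered argument, a basis of $H_\dfe(\spn_{2n})$ over $\mathfrak{z}(H_\dfe(\spn_{2n}))$; hence $H_\dfe(\spn_{2n})$ is free over its center.

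The main obstacle is step (i) of part (2): pinning down $\dim\mathcal{N}$, equivalently proving that the explicit invariants $c_1^{\mathrm{top}},\dots,c_n^{\mathrm{top}}$ form a regular sequence in $S(\spn_{2n})$. This is a genuine ``zero-fiber'' (nilpotent-cone-type) dimension count that does \emph{not} follow formally from the Poisson-center statement of Theorem~\ref{theorem:sp2npoissoncenter}, since fibers of a $\mathbb{C}^\times$-equivariant morphism can jump in dimension over the origin; it — together with the closely related $R_1$ estimate in step (ii) — is where the real work lies, the remainder being standard commutative algebra or a transcription of the classical Kostant--Springer analysis for $\spn_{2n}$.
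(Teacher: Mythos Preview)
Your high-level architecture is sound, but two of the load-bearing steps are not actually done, and one of them contains a false reduction.

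\textbf{The false equivalence in step (i).} You write that the $c_i^{\mathrm{top}}$ forming a regular sequence in $S(\spn_{2n})$ is ``equivalently'' their algebraic independence as polynomials in $\mathcal{Q}_1,\dots,\mathcal{Q}_n$. This is not true: $n$ (weighted-)homogeneous, algebraically independent elements of a polynomial ring in $n$ variables need not be a system of parameters --- e.g.\ $x_1^2$ and $x_1x_2$ in $\mathbb{C}[x_1,x_2]$. What you actually need is that $\mathbb{C}[\mathcal{Q}_1,\dots,\mathcal{Q}_n]$ is \emph{finite} over $\mathbb{C}[c_1^{\mathrm{top}},\dots,c_n^{\mathrm{top}}]$, i.e.\ that the common zero of the $c_i^{\mathrm{top}}$ in $\mathfrak{h}/W$ is the origin. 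The paper extracts exactly this from the explicit residue formula of Theorem~\ref{theorem:sp2npoissoncenter}: one checks that $\mathbb{C}[\mathfrak{h}]^W$ is finite and free over $\mathbb{C}[c_1^{\mathrm{top}},\dots,c_n^{\mathrm{top}}]$ (by looking at the top terms $\gr c_i^{\mathrm{top}}$, which are the highest complete symmetric functions in $\lambda_i^2$), and then concatenates with the classical Kostant freeness $S(\spn_{2n})/\mathbb{C}[\mathfrak{h}]^W$. This simultaneously proves the complete-intersection statement and, via the filtered-to-graded lift you describe, part~(1) --- so the paper does \emph{not} deduce (1) from (2); it proves the freeness directly.

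\textbf{The missing idea in step (ii).} Your $R_1$ estimate ``runs parallel to the classical rank estimate'' only after a reduction you have not made. The paper introduces a \emph{second} filtration on $B_m$ with $\deg g=1$, $\deg v=0$; under it $\gr(\pt_i')=c_i^{\mathrm{top}}$, so $\gr B_m=S(V)\otimes S(\spn_{2n})/(c_i^{\mathrm{top}})$ and $\tilde Z:=\mathrm{Spec}(\gr B_m)$ is set-theoretically $V\times\mathcal{N}_{\spn_{2n}}$. Irreducibility of $Z$ then follows from irreducibility of $\tilde Z$, and it suffices to bound the singular locus $\tilde U\subset\tilde Z$. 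There the paper changes generators to $S_i=-\omega(A^{2i-1}v,v)+f_i(\mathcal{Q}_{<i},c^{\mathrm{top}}_{\le i})$, splits $\tilde U$ according to whether $A$ is regular nilpotent, and for a fixed regular nilpotent $A$ computes the $v$-derivatives $\partial S_i/\partial y_j=-2\omega(A^{2i-1}v,y_j)$ and one carefully chosen $\spn_{2n}$-derivative $\partial S_n/\partial e_{2n,1}$ to exhibit two independent conditions on $v$. That explicit Jacobian computation on $V\times\{A\}$ is the content of the codimension-$2$ bound; slicing at $v=v_1$ as you propose does not give you access to the classical nilpotent-cone estimate, because on that slice the defining functions are \emph{not} the classical invariants of $\spn_{2n}$.

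In short: the commutative-algebra skeleton you wrote is correct, but the paper's proof hinges on the second filtration (degeneration to $V\times\mathcal{N}$) and on reading off finiteness of $\mathbb{C}[\mathfrak{h}]^W$ over $\mathbb{C}[c_i^{\mathrm{top}}]$ from the explicit formula --- two ingredients your outline does not supply.
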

\begin{proof}
(1) Introduce a filtration on $B_m$ with $\deg g=1$ for $g \in \spn_{2n}$ and $\deg v=0$ for $v\in V$.
Define $B_m^{(1)}$ by $B_m^{(1)}=\gr B_m=S(V \oplus \spn_{2n})/(c_i^{\mathrm{top}})_{1 \leq i \leq n}$.
The formula in Theorem \ref{theorem:sp2npoissoncenter} implies that $\mathbb{C}[\lambda_1, \ldots, \lambda_n]^{S_n}$
is a free and finite module over $\mathbb{C}[\gr c_1^{\mathrm{top}}, \ldots, \gr c_n^{\mathrm{top}}]$,
so $\mathbb{C}[\mathfrak{h}]^W$ is finite and free over $\mathbb{C}[c_1^{\mathrm{top}}, \ldots, c_n^{\mathrm{top}}]$.
Since $S(\spn_{2n})$ is free over $\mathbb{C}[\mathfrak{h}]^W$ by the classical Kostant's theorem,
$S(\spn_{2n})$, and hence $S(\spn_{2n}) \otimes SV$, is free over $\mathbb{C}[c_1^{\mathrm{top}}, \ldots, c_n^{\mathrm{top}}]$.
%Because $S(V \oplus \spn_{2n})$ is free over $\mathbb{C}[c_1^{\mathrm{top}}, \ldots, c_{n}^{\mathrm{top}}]$,
Thus,
$S(V \oplus \spn_{2n})$ is free over $\mathbb{C}[\tau_1+c_1^{\mathrm{top}}, \ldots, \tau_n+c_{n}^{\mathrm{top}}]$, 
implying the result.

(2) To show that $B_m$ is a normal integral domain, it suffices to show that the smooth locus 
of the zero set of $\tau_1', \tau_2', \ldots, \tau_n'$
has codimension 2 and is irreducible.
Let $Z=\mathrm{Spec}(B_m)$ be a closed subscheme of $V \oplus \spn_{2n}$
defined by $\tau_i'=0$,
and let
\[
U:= Z \backslash Z_{sm}=\{ (v, A) \in V \oplus \spn_{2n}| (v,A) \in Z \text{ and } \mathrm{rank}(\mathrm{Jac})<n\},
\]
where Jac is the Jacobi matrix of $\tau_1', \tau_2', \ldots, \tau_n'$
at $(v,A)$ with respect to some basis of $V$ and $\spn_{2n}$.
It suffices to show that $U$ is a codimension 2 subvariety of $Z$ and that the latter is irreducible.

Now, recall that
\[
\sum (-1)^{i-1} \{\mathcal{Q}_i, v_j\} v_j^{*} = -(\omega(A^{2i-1}v,v)+\mathcal{Q}_1 \omega(A^{2i-3}v,v)+\mathcal{Q}_2 \omega(A^{2i-5}v,v)+\cdots).
\]
By changing basis, we can rewrite
$((-1)^{i-1} \sum_j \{\mathcal{Q}_i, v_j\} v_j^* + c_i^{\mathrm{top}})_{1 \leq i \leq n}$
as 
${(S_i)}_{1 \leq i \leq n}$, 
where
\[
S_i=-\omega(A^{2i-1} v, v)+f_i(c_1^{\mathrm{top}}, \ldots, c_i^{\mathrm{top}},\mathcal{Q}_1, \ldots, \mathcal{Q}_{i-1})
%(c_i^{\mathrm{top}}- \mathcal{Q}_1 c_{i-1}^{\mathrm{top}}-\mathcal{Q}_2 c_{i-2}^{\mathrm{top}}-\cdots));
\]
and $f_i(c_i^{\mathrm{top}}, \mathcal{Q}_i)$ are polynomial expressions in 
$c_1^{\mathrm{top}}, \ldots, c_i^{\mathrm{top}}$ and $\mathcal{Q}_1, \ldots, \mathcal{Q}_{i-1}$ (in particular, there is no dependence on $v$). 
We can and will use the Jacobian of $S_i$ instead of $\tau_i'$ to describe $U$.

Let us calculate the derivatives of $\omega(A^{2i-1} v,v)$ with respect to $y_j \in V$ and $\gamma \in \spn_{2n}$:
\begin{align*}
\frac{\partial}{\partial y_j}(\omega(A^{2i-1} v,v))=2 \omega (A^{2i-1} v, y_j),
\ \ 
\frac{\partial}{\partial \gamma}(\omega(A^{2i-1} v,v))=\omega(A^{2i-2}\gamma v+A^{2i-3}\gamma Av+\cdots+\gamma A^{2i-2} v, v).
\end{align*}
Thus, if
\[
\mu_1 \mathrm{grad}(S_1)+\mu_2 \mathrm{grad}(S_2)+\cdots + \mu_n \mathrm{grad}(S_n)=0
\]
for some $\mu_1, \mu_2, \ldots, \mu_n \in \mathbb{C}$,
then 
\[
\omega(\mu_1 Av, y_j)+\omega(\mu_2 A^3 v, y_j)+\cdots+\omega(\mu_n A^{2n-1} v, y_j)=0
\]
for all $1 \leq j \leq 2n$.
Equivalently, $(\mu_1 A+\mu_2 A^3 + \cdots+\mu_n A^{2n-1})v=0$.

Now we will consider the situation in $B_m^{(1)}=\gr B_m$.
We know that 
$\dim Z = \dim \tilde{Z}$, where $\tilde{Z}=\mathrm{Spec} B^{(1)}_m=V \times \mathcal{N}$ and $\mathcal{N}$ is the nilpotent cone of $\spn_{2n}$.
Since $V$ and $\mathcal{N}$ are irreducible, $\tilde{Z}$, and hence $Z$, is irreducible.
Recall that $U$ was defined as
the locus of points $(v, A) \in Z \subset V \oplus \spn_{2n}$ such that
$\mathrm{rank}(\mathrm{Jac})<n$,
or in other words, all $n \times n$ minors of the Jacobian matrix have determinant 0.
Since each of those determinants is homogeneous with respect to 
our second filtration, it is natural to define $\tilde{U} \subset \tilde{Z}$
as a locus of points where $\mathrm{rank}(\mathrm{Jac})<n$.
Then, $\dim U \leq \dim \tilde{U}$.
Note that $\tilde{U}=\tilde{U}_1 \sqcup \tilde{U}_2$,
where $\tilde{U}_1=\tilde{U} \cap \{(v,A)| A \text{ is regular nilpotent}\}$ and
$\tilde{U}_2=\tilde{U} \cap \{(v,A)| A \text{ is not a regular nilpotent}\}$.
The codimension of a regular nilpotent's orbit is 2, so 
$\mathrm{codim}_{\tilde{Z}}(\tilde{U}_2) \geq 2$.
It suffices to show that $\mathrm{codim}_{\tilde{Z}}(\tilde{U}_1) \geq 2$ as well.
We shall do this by showing that
given a regular nilpotent $A$, $\dim( V_{A, \mathrm{sing}}) \leq 2n-2$,
where $ V_{A, \mathrm{sing}}=\{v \in V| (v,A) \in \tilde {U}\}$.

Let us switch to a basis of $\spn_{2n}$ where
the skew symmetric form is represented by the matrix
\[
J'=
\left(
\begin{matrix}
	0 & \cdots &0       & 0 & -1\\
	0 & \cdots &0      & 1 & 0\\
	0 & \cdots & -1     & 0 & 0\\
	\vdots & \udots & \vdots & \vdots & \vdots \\
	1 & 0      & 0     & \cdots & 0
\end{matrix}
\right).
\]
If we define
\[
A=
\left(
\begin{matrix}
	0 & 1 & 0 & \cdots & 0\\
	0 & 0 & 1 & \cdots & 0\\
	\vdots & \vdots & \cdots & \ddots & 0\\
	0 & 0 & 0 & \cdots & 1 \\
	0 & 0 & 0 & \cdots & 0
\end{matrix}
\right),
\]
then $AJ'+J'A^T=0$, implying $A \in \spn_{2n}$.
Now, suppose that $\sum_{1 \leq j \leq n} \mu_j \mathrm{grad}(S_j)=0$ at $(A,v)$, for $v=(a_1, \ldots, a_{2n})$.
By examining the $\frac{\partial}{\partial y_j}$ components of $\mathrm{grad}(S_j)$, we get $a_{2n}=0$;
moreover, either $a_{2n-1}=0$, or $\mu_1=\cdots=\mu_{n-1}=0$.
The conditions $a_{2n}=a_{2n-1}=0$ define a codimension two subspace as desired.
We thus need to show that if $a_{2n}=0$ and 
$\mu_1=\cdots=\mu_{n-1}=0$,
then 
$\sum_{1 \leq j \leq n} \mu_j \mathrm{grad}(S_j)=0$
implies
a nontrivial condition on $v$.
To find such a condition, note that 
\[
\frac{\partial}{\partial\gamma}(\omega(A^{2n-1}v,v))=\omega(A^{2n-2}\gamma v,v)+\omega(A^{2n-3} \gamma Av,v)+\cdots+\omega(\gamma A^{2n-2}v,v),
\]
and that $\frac{\partial}{\partial\gamma}f_i(c_1^{\mathrm{top}}, \ldots, c_i^{\mathrm{top}},\mathcal{Q}_1, \ldots, \mathcal{Q}_{i-1})$ does not depend on $v$. 
Now, let us take $\gamma=e_{2n,1}$;
we can verify that $e_{2n,1}J'+J'e_{2n,1}^T=0$, so $e_{2n,1} \in \spn_{2n}$.
We note that $e_{2n,1} A^{2n-2}=e_{2n,2n-1}$, $Ae_{2n,1}A^{2n-3}=e_{2n-1,2n-2}$, 
$A^2e_{2n,1}A^{2n-4}=e_{2n-2,2n-3}$
and so forth.
Thus, $\frac{\partial}{\partial\gamma}(\omega(A^{2n-1}v,v))=\omega(A^T v, v)$.
However, setting $v=(a_1, \ldots, a_{2n-1}, 0)$, we get $\omega(A^T v,v)=\omega((0,a_1,\ldots,a_{2n-1}),(a_1,\ldots,a_{2n-1},0))$,
which is a nontrivial degree two polynomial in $a_1, \ldots, a_{2n-1}$
that should equal the number 
$\frac{\partial}{\partial\gamma}(f_i(c_1^{\mathrm{top}}, \ldots, c_i^{\mathrm{top}},\mathcal{Q}_1, \ldots, \mathcal{Q}_{i-1}))(A)$. 
This gives the other codimension 1 condition, and so
$\tilde{U}_1$ is at least of codimension 2 in $\tilde{Z}$ as desired.
\end{proof}

$\ $

\appendix
\section*{Appendix: Proof of Lemma \ref{lemma:sp2nkey}}
In this section, we will outline the proof of  Lemma \ref{lemma:sp2nkey}, which states:
\begin{equation}
\tag{$\dagger$}
\label{eqn:identity}
\sum_{j=1}^{2n} \sum_{e\in B}\left\{\frac{\partial \mathcal{Q}_i}{\partial e} ,v \right \} e(v_j)v_j^*=0.
\end{equation}
We use the basis for $V$ defined in Section \ref{section:sp2n}, in which $\omega$ is represented by the matrix $J$.

Let us multiply (\ref{eqn:identity}) by $t^{2i}$ and sum over $i$ to get the equivalent assertion that
\[
\sum_j \sum_{e\in B}\left \{\frac{\partial \det(1-tA)}{\partial e} ,v \right\} e(v_j)v_j^*=0.
\]
Since the whole sum is $\mathfrak{sp}_{2n}$-invariant (even though each term considered separately is not),
we can look at the restriction of the sum to $\mathfrak{h}$.
Thus, this sum equals zero if and only if
\[
\left. \sum_j\sum_{e\in B}\left \{\frac{\partial \det(1-tA)}{\partial e} ,v\right\} e(v_j)v_j^* \right|_{\mathfrak{h}}=0.
\]

We choose the following basis $B$
for $\spn_{2n}$: $e_{2j-1, 2j}$, $e_{2j, 2j-1}$, $e_{2j-1,2j-1}-e_{2j,2j}$, for all $1 \leq j \leq n$,
and for all $1 \leq k < l \leq n$, the elements $e_{2l-1,2k}+e_{2k-1,2l}$, $e_{2l,2k}-e_{2k-1,2l-1}$, $e_{2l-1,2k-1}-e_{2k,2l}$, and
$e_{2l,2k-1}+e_{2k,2l-1}$.
We observe that for any $1\leq j, j' \leq 2n$,
there exists a unique basis vector in $B$ that takes $v_{j}$ to $\pm v_{j'}$;
we shall denote this element by $v_{j',j} \in B$.
These $v_{j', j}$ are not pairwise distinct
since there are basis vectors with two nonzero entries.

Since $\mathrm{Sp}_{2n}$ acts transitively on $V$, we can assume $v=v_1$. Using the above basis, we get
\[
\sum_j \sum_{e\in B}\left\{\frac{\partial \det(1-tA)}{\partial e} ,v_1 \right\} e(v_j)v_j^*=
\sum_{j,j',k} \frac{\partial^2 \det(1-tA)}{\partial v_{k,1} \partial v_{j',j}} v_{j'} v_k v_j^* (-1)^{\iota_{jj'}},
\]
where
\begin{displaymath}
   \iota_{jj'} = \left\{
     \begin{array}{ll}
       1 & \text{if } j \equiv j' \text{ mod 2 and } j<j'\text{, or if } j'=j \text{ and }j\text{ is even,}\\
       0 & \text{otherwise.}
     \end{array}
   \right.
\end{displaymath} 
We now restrict to $\mathfrak{h}$.
% and only keep track of the non-zero terms. 
We have $\left. \frac{\partial^2 \det(1-tA)}{\partial v_{k,1}\partial v_{j',j}} \right|_{\mathfrak{h}} \neq 0$ 
only when the matrices for $v_{k,1}$ and $v_{j',j}$ have nonzero entries on the diagonal, or
if $v_{k,1}$ and $v_{j',j}$ have nonzero entries at the $i$-th row $j$-th column and $j$-th row $i$-th column, respectively.
This can only happen when $v_{j'} v_k v_j^*=v_1 v_a v_a^*$ for some $a$.
We can list all the ways this can happen for $a=2b$ or $a=2b-1$ with $b \neq 1$ (keeping in mind that $v_{2b-1}^*=v_{2b}$ and $v_{2b}^*=-v_{2b-1}$):
\begin{enumerate}
	\item $\frac{\partial^2 \det(1-tA)}{\partial v_{1,1} \partial v_{2b-1,2b-1}} v_{1} v_{2b-1} v_{2b}$,
	\item $\frac{\partial^2 \det(1-tA)}{\partial v_{1,1} \partial v_{2b,2b}} v_{2b} v_1 v_{2b-1}$,
	\item $\frac{\partial^2 \det(1-tA)}{\partial v_{2b-1,1} \partial v_{1,2b-1}} v_{1} v_{2b-1} v_{2b}$,
	\item $\frac{\partial^2 \det(1-tA)}{\partial v_{2b,1} \partial v_{1,2b}}( -v_{1} v_{2b} v_{2b-1})$,
	\item $\frac{\partial^2 \det(1-tA)}{\partial v_{2b,1} \partial v_{2b-1,2}} (-v_{2b-1} v_{2b} v_1)$,
	\item $\frac{\partial^2 \det(1-tA)}{\partial v_{2b-1,1} \partial v_{2b,2}} v_{2b-1} v_{2b} v_1$.
\end{enumerate}
To calculate the derivatives,
let $A_1$ be the 4 by 4 matrix formed by the intersections of the first, second, $2b-1$-th, and $2b$-th rows and columns of $A$,
and let $A_2$ be the $2n-4$ by $2n-4$ matrix formed by the intersections of the remaining rows and columns.
The space of all such $A_2$ is isomorphic to $\spn_{2n-4}$,
and we denote the Cartan subalgebra of diagonal matrices of this
space by $\mathfrak{h}(A_2)$.
All six of the above derivatives evaluate to the same polynomial in $\mathfrak{h}(A_2)$
times the corresponding derivative in $\spn_4$; for instance,
$\frac{\partial^2 \det(1-tA)}{\partial v_{1,1} \partial v_{2b-1,2b-1}}=h\frac{\partial^2 \det (1-tA_1)}{\partial v_{1,1}'\partial v_{3,3}'}$ with $v_{1,1}', v_{3,3}' \in \spn_4$ and $h \in S(\mathfrak{h}(A_2))[t]$.
Thus, we can reduce our problem to $\spn_4$, and straightforward computations verify (\ref{eqn:identity}) for $\spn_4$.
Similarly, when $b=1$ (that is, when the term is of the form $v_{1} v_{1} v_{2}$), all computations will reduce to analogous ones in $\spn_2$.

$\ $

{\bfseries Acknowledgments:}
The authors would like to thank Pavel Etingof for
suggesting this research topic,
for stimulating discussions,
and for reviewing the rough draft of this paper.
The authors would also like to thank the PRIMES program at MIT for sponsoring this research.
Finally, the authors would like to thank the referee for 
pointing out some inaccuracies in the original proof of
Theorem \ref{theorem:classification}.

$\ $

\end{document}